\setlist[description]{font=\normalfont\scshape}
\xpatchcmd{\proof}{\itshape}{\normalfont\bfseries}{}{}
\newtheorem{theorem}{Theorem}[section]
\newtheorem{proposition}[theorem]{Proposition}
\newtheorem{prop}[theorem]{Proposition}
\newtheorem{lemma}[theorem]{Lemma}
\newtheorem{corollary}[theorem]{Corollary}
\newtheorem{fact}[theorem]{Fact}
\newtheorem{conjecture}[theorem]{Conjecture}
\newtheorem*{thm}{Theorem}
\theoremstyle{definition}
\newtheorem{definition}[theorem]{Definition}
\newtheorem{construction}[theorem]{Construction}
\newtheorem{remark}[theorem]{Remark}
\newtheorem{remarks}[theorem]{Remarks}
\newtheorem{example}[theorem]{Example}
\newtheorem{examples}[theorem]{Examples}
\newtheorem{question}[theorem]{Question}
\newcommand{\calC}{\ensuremath{\mathcal{C}}}
\newcommand{\ExpF}{\ensuremath{\mathbf{ExpF}}}
\newcommand{\EAF}{\ensuremath{\mathbf{EAF}}}
\newcommand{\ELAF}{\ensuremath{\mathbf{ELAF}}}
\newcommand{\ELAFstrong}{\ensuremath{\mathbf{ELAF}_\strong}}
\newcommand{\ExpFkp}{\ensuremath{\ExpF_{\mathrm{kp}}}}
\newcommand{\ELAFkp}{\ensuremath{\ELAF_{\mathrm{kp}}}}
\newcommand{\ELAFKkp}{\ensuremath{\ELAF_{K,\mathrm{kp}}}}
\newcommand{\ECF}{\ensuremath{\mathbf{ECF}}}
\newcommand{\N}{\ensuremath{\mathbb{N}}}
\newcommand{\Z}{\ensuremath{\mathbb{Z}}}
\newcommand{\Q}{\ensuremath{\mathbb{Q}}}
\newcommand{\alg}{\ensuremath{\mathrm{alg}}}
\DeclareMathOperator{\gtp}{gtp}
\DeclareMathOperator{\cl}{cl}
\DeclareMathOperator{\ecl}{ecl}
\DeclareMathOperator{\td}{td}
\DeclareMathOperator{\etd}{etd}
\DeclareMathOperator{\ldim}{ldim}
\DeclareMathOperator{\linspan}{span}
\newcommand{\Cexp}{\ensuremath{\mathbb{C}_{\exp}}}
\newcommand{\Rexp}{\ensuremath{\mathbb{R}_{\exp}}}
\DeclareMathOperator {\Th} {Th}
\DeclareMathOperator {\A} {A}
\DeclareMathOperator {\seq} {\subseteq}
\newcommand{\abar}{\ensuremath{\bar{a}}}
\newcommand{\bbar}{\ensuremath{\bar{b}}}
\newcommand{\ELA}{{\textup{ELA}}}
\newcommand{\EA}{{\textup{EA}}}
\renewcommand{\phi}{\varphi}
\newcommand{\hull}[1]{\lceil #1 \rceil}
\newcommand{\Qlin}{{\Q\textup{-lin}}}
\newcommand{\gen}[1]{\ensuremath{\left\langle #1 \right\rangle}}
\newcommand{\calB}{\mathcal{B}}
\newcommand{\Bexp}{\mathbb{B}_\mathrm{exp}}
\newcommand{\iso}{\cong}
\def\Ind#1#2{#1\setbox0=\hbox{$#1x$}\kern\wd0\hbox to 0pt{\hss$#1\mid$\hss}
\lower.9\ht0\hbox to 0pt{\hss$#1\smile$\hss}\kern\wd0}
\def\ind{\mathop{\mathpalette\Ind{}}}
\def\Notind#1#2{#1\setbox0=\hbox{$#1x$}\kern\wd0\hbox to 0pt{\mathchardef
\nn="3236\hss$#1\nn$\kern1.4\wd0\hss}\hbox to 0pt{\hss$#1\mid$\hss}\lower.9\ht0
\hbox to 0pt{\hss$#1\smile$\hss}\kern\wd0}
\def\nind{\mathop{\mathpalette\Notind{}}}
\newcommand{\into}{\hookrightarrow}
\newcommand{\strong}{\ensuremath{\lhd}}
\renewcommand{\leq}{\leqslant}
\renewcommand{\geq}{\geqslant}
\newcommand{\leteq}{\ensuremath{:=}}
\title{Independence relations for exponential fields}
\author{Vahagn Aslanyan, Robert Henderson, Mark Kamsma and Jonathan Kirby}
\address[All authors]{School of Mathematics, University of East Anglia, 
UK}
\address[Current addresses]{~} 
\email{V.Aslanyan@leeds.ac.uk}
\urladdr{https://vahagn-aslanyan.github.io\vspace{-.7em}}
\address{School of Mathematics, University of Leeds,
UK}
\address{}
\email{roberthenderson@maynard.co.uk}
\address{}
\email{mark@markkamsma.nl}
\urladdr{https://markkamsma.nl\vspace{-.7em}}
\address{Department of Mathematics, Imperial College London, 
UK}
\address{}
\email{jonathan.kirby@uea.ac.uk}
\date{\today, version 1.0
\\
VA and JK were supported by EPSRC grant EP/S017313/1. RH and MK were supported by PhD studentships from UEA. MK was also supported by EPSRC grant EP/W522314/1.}
\subjclass{03C45, 03C65, 03C48}
\keywords{Independence relation, exponential field, abstract elementary class, Ax-Schanuel}
\begin{document}

\maketitle

\begin{abstract}
We give four different independence relations on any exponential field. Each is a canonical independence relation on a suitable Abstract Elementary Class  of exponential fields, showing that two of these are NSOP$_1$-like and non-simple, a third is stable, and the fourth is the quasiminimal pregeometry of Zilber's exponential fields, previously known to be stable (and uncountably categorical). We also characterise the fourth independence relation in terms of the third,~strong independence.
\end{abstract}

\setcounter{tocdepth}{1}
\tableofcontents

\section{Introduction}

\subsection{Independence relations in model theory}

Ternary independence relations are very widely used across model-theory, both in pure model theory, where they arise for instance from Shelah's key notions of splitting and forking, and in applications, where they often capture useful algebraic information. The basic examples include disjointness of subsets, linear independence in vector spaces, and algebraic independence in fields. These are all strongly minimal examples, but independence relations are also important higher up in the stability hierarchy.

Kim and Pillay \cite{kim_simple_1997} proved that if a complete first-order theory $T$ admits an independence relation $\ind$ satisfying a certain list of properties then $T$ lies in the stability class known as simple theories. Furthermore, $\ind$ is the unique independence relation satisfying those properties and is given by non-forking. There is a similar theorem with a slightly stronger list of properties characterizing stable theories, and more recently \cite{kaplan_kim-independence_2020} an analogous theorem for NSOP$_1$-theories.

There have been various generalisations of these Kim--Pillay-style theorems beyond the first-order context, for example to positive logic in \cite{ben-yaacov_simplicity_2003, dobrowolski_kim-independence_2022}, to some Abstract Elementary Classes (AECs) in \cite{BuechlerLessman2003, HyttinenLessman2006, hyttinenKesala_2006}, and to an even more general and abstract context of Abstract Elementary Categories by the third author in \cite{kamsma_kim-pillay_2020, kamsma_nsop_1-like_2022}. There is also other recent work on abstract independence relations in a category-theoretic context in the stable case in \cite{lieberman_forking_2019}.

\subsection{The main theorems}

In this paper we illustrate the theory of independence relations with four examples in exponential fields. None of our examples fit the setting of a complete first-order theory, but they all fit into the context of AECs.

\begin{definition}
\thlabel{def:exponential-field} \thlabel{def:ea-field, ELA-field}
An \emph{exponential field}, or \emph{E-field} for short, is a field $F$ of characteristic zero together with a group homomorphism $\exp: \langle F;+\rangle \to \langle F^\times;\times\rangle$, from the additive group to the multiplicative group of $F$. We will also write $e^x$ instead of $\exp(x)$, or write $\exp_F(x)$ if we need to specify $F$.

We call an E-field $F$ an \emph{EA-field} if the underlying field is algebraically closed. If, in addition, 
every nonzero element has a logarithm (that is, for every $b \in F^\times$ there is $a \in F$ such that $e^a = b$) then we say $F$ is an \emph{ELA-field}.
\end{definition}
The obvious examples of exponential fields are the real and complex fields with exponentiation given by the usual power series, but one can also construct exponential maps algebraically.
See \cite{kirby_finitely_2013} for a detailed account of such constructions.

The four independence relations in this paper are: EA-independence, ELA-independence, strong independence, and the independence relation associated with the exponential algebraic closure pregeometry, and its dimension notion called exponential transcendence degree. We denote these respectively by $\ind^\EA$, $\ind^\ELA$, $\ind^\strong$, and $\ind^{\etd}$. We next explain our main results,  deferring the definitions to later.

EA-independence was introduced in \cite{haykazyan_existentially_2021}, where it was shown to satisfy certain properties with respect to the category of existentially closed exponential fields, and those properties were shown to be sufficient that the associated theory in positive logic is NSOP$_1$, that is, no formula has SOP$_1$.

Subsequently, \cite{kamsma_nsop_1-like_2022} gave a slightly stronger list of properties for an NSOP$_1$-like independence relation, sufficient to rule out the existence of a distinct simple or stable independence relation, which is summarised in \thref{fact:canonicity-of-independence}. In particular, this implies canonicity for simple and stable independence relations. Those stronger properties have been verified in existing literature \cite{haykazyan_existentially_2021, dobrowolski_kim-independence_2022}. We make the addition that a further property fails, meaning that EA-independence cannot be simple, and so there cannot be a simple independence relation.
\begin{theorem}
\thlabel{thm:ea-nsop1-like-independence}
The independence relation $\ind^\EA$ is an NSOP$_1$-like, non-simple independence relation on the category \EAF\ of EA-fields. 
\end{theorem}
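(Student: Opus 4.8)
The plan is to prove the two assertions separately: that $\ind^{\EA}$ is NSOP$_1$-like, which is mostly a matter of assembling properties already in the literature, and that it is not simple, which requires a genuinely new construction.

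For NSOP$_1$-likeness I would verify, one by one, the axioms in the definition of an NSOP$_1$-like independence relation from \cite{kamsma_nsop_1-like_2022} as recorded in \thref{fact:canonicity-of-independence}: invariance, monotonicity, symmetry, the character and existence/extension axioms, and the independence theorem (3-amalgamation over the relevant bases). The bookkeeping axioms follow directly from the definition of $\ind^{\EA}$. The substantive ones --- the independence theorem together with character and extension --- were already established for EA-independence in the positive, existentially-closed setting of \cite{haykazyan_existentially_2021, dobrowolski_kim-independence_2022}; the work here is to transport those arguments faithfully into the AEC \EAF, checking that the category's embeddings, the notion of base, and the closure operations match those used there. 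I would give particular care to the independence theorem, re-deriving it from amalgamation of EA-fields so that it holds over exactly the bases relevant to \EAF.

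To show $\ind^{\EA}$ is not simple, the strategy is to falsify the one axiom that distinguishes a simple from an NSOP$_1$-like relation, namely base monotonicity, which here asserts that $A \ind^{\EA}_C B$ together with $C \subseteq D \subseteq \gen{C, B}$ yields $A \ind^{\EA}_D B$. I would construct an explicit configuration inside a large EA-field exhibiting $A \ind^{\EA}_C B$ but $A \nind^{\EA}_D B$ for a suitable intermediate $D$. The mechanism is the Schanuel-type behaviour of $\exp$: one chooses elements whose exponentials satisfy an algebraic relation that is invisible over the small base $C$ yet forces dependence once a carefully chosen element of $\gen{C,B}$ is absorbed into the base. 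In practice this means prescribing the exponential-transcendence data of the generators and then computing, via the predimension / Ax--Schanuel inequality governing $\ind^{\EA}$, that the predimension drops precisely where base monotonicity would forbid it.

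The main obstacle is this counterexample. The NSOP$_1$-like axioms are essentially off the shelf, but the failure of base monotonicity must be produced by hand, and the delicate point is selecting exponential-algebraic data intricate enough to break independence under base enlargement while remaining explicit enough to evaluate the relevant predimensions. Granting the construction, the conclusion is immediate, and in fact slightly stronger: since any simple independence relation satisfies base monotonicity, the canonicity in \thref{fact:canonicity-of-independence} forces such a relation to coincide with $\ind^{\EA}$, which we have just shown is not simple --- so \EAF\ admits no simple independence relation at all.
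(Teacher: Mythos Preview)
Your overall architecture matches the paper's: cite \cite{haykazyan_existentially_2021, dobrowolski_kim-independence_2022} for the substantive NSOP$_1$-like axioms (the paper credits \textsc{Extension} to \cite[Prop.~10.5]{dobrowolski_kim-independence_2022}, \textsc{3-amalgamation} to \cite[Thm.~6.5]{haykazyan_existentially_2021}, and \textsc{Club Local Character} via \textsc{Strong Finite Character} to \cite[Remark~9.8]{dobrowolski_kim-independence_2022} and the methods of \cite{kaplan_local_2019}), then exhibit a failure of \textsc{Base-Monotonicity} to rule out simplicity.

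Where your plan goes wrong is the mechanism you propose for the counterexample. You speak of ``the predimension / Ax--Schanuel inequality governing $\ind^{\EA}$'' and of arranging that ``the predimension drops'', but $\ind^{\EA}$ is \emph{not} governed by predimension at all: by definition $A \ind^{\EA}_C B$ means $\gen{AC}^{\EA} \ind^{\td}_{\gen{C}^{\EA}} \gen{BC}^{\EA}$, a pure transcendence-degree statement about EA-closures. Predimension and Ax--Schanuel enter only for $\ind^{\strong}$ and $\ind^{\etd}$, and a predimension-drop argument would be analysing the wrong quantity. The paper's actual counterexample (\thref{ex:base-monotonicity-failure}) instead exploits the fact that the EA-closure $\gen{AD}^{\EA}$ can be strictly larger than the algebraically closed field generated by $\gen{A}^{\EA} \cup \gen{D}^{\EA}$: over an arbitrary EA-field $C$ one freely adjoins $a,d,b_1,b_2$ and imposes the single relation $\exp(ad) = ab_1 + b_2$. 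With $A = C(a)^{\alg}$, $D = C(d)^{\alg}$, $B = D(b_1,b_2)^{\alg}$, the element $t = ab_1+b_2$ is forced into $\gen{AD}^{\EA}$, and a four-line transcendence-degree count shows $A \ind^{\EA}_C B$ while $\gen{AD}^{\EA} \nind^{\td}_D B$. No Schanuel-type inequality is involved; the point, as the paper remarks, is simply that $\ind^{\EA}$ sees only $\gen{AC}^{\EA} \cup \gen{BC}^{\EA}$ and not $\gen{ABC}^{\EA}$.
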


The ELA-independence relation is introduced in this paper, as particularly relevant where we consider extensions of exponential fields where the kernel of the exponential map does not extend. We prove:
\begin{theorem}\thlabel{ELA-indep is NSOP1}
For any kernel type $K$, the independence relation $\ind^\ELA$ is  NSOP$_1$-like and non-simple on the category $\ELAFKkp$ of ELA-fields with kernel type $K$ and kernel-preserving embeddings. 
\end{theorem}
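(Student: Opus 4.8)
The plan is to verify, for the category $\ELAFKkp$, the complete list of axioms defining an NSOP$_1$-like independence relation recorded in \thref{fact:canonicity-of-independence}, and then to exhibit a single configuration showing that the additional property needed for simplicity (base monotonicity) fails. By the canonicity statement this simultaneously yields that $\ind^\ELA$ is NSOP$_1$-like, that it is non-simple, and that no simple independence relation can exist on $\ELAFKkp$. Since the argument runs closely parallel to the EA case of \thref{thm:ea-nsop1-like-independence}, I would organise it so as to isolate exactly the points where the kernel-preserving constraint forces extra work.

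First I would confirm the basic categorical setup: that $\ELAFKkp$, with kernel-preserving embeddings as morphisms, has amalgamation and the joint embedding property, so that one can work with a monster model and Galois types. The essential new ingredient compared with the EA case is that every amalgam must leave the kernel $K$ unchanged; I would obtain this from a free-amalgamation construction for ELA-fields in which logarithms are adjoined freely while checking that no new kernel elements are created, so that the resulting map is genuinely kernel-preserving. I would then verify the structural axioms for $\ind^\ELA$ — invariance, monotonicity, symmetry, finite character, existence/extension, and local character — most of which follow formally from the definition of $\ind^\ELA$ via exponential-algebraic closure and the additivity of $\etd$. The extension axiom again requires realizing an independent copy over a given base inside a model of kernel type $K$, which uses the same kernel-preserving construction.

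The heart of the proof, and the step I expect to be the main obstacle, is the independence theorem over models: given a model $M$ of kernel type $K$ and three pairwise-independent configurations, one must amalgamate them into a single ELA-field of kernel type $K$ that is independent over $M$. The delicate point is to carry out this $3$-amalgamation while simultaneously adjoining all required logarithms and keeping the kernel equal to $K$; here I expect the Ax--Schanuel theorem to be the key tool, bounding the exponential transcendence degree of the amalgam and guaranteeing that no unexpected exponential-algebraic dependence — which would both violate independence and threaten to enlarge the kernel — is introduced. Because kernel-preservation removes degrees of freedom that were available in the EA setting, the amalgamation must track the kernel at every stage, and this bookkeeping is where the real difficulty lies.

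Finally, for non-simplicity I would produce explicit elements $a,b,c$ over a model $M$ in $\ELAFKkp$ with $a \ind^\ELA_M bc$ but $a \nind^\ELA_{Mb} c$, so that base monotonicity fails. The mechanism is that enlarging the base from $M$ to $Mb$ can create a new exponential-algebraic relation between $a$ and $c$ that was invisible over $M$, exactly as in the EA case but now realized inside an ELA-field with fixed kernel. With the NSOP$_1$-like axioms verified and base monotonicity refuted, \thref{fact:canonicity-of-independence} gives that $\ind^\ELA$ is NSOP$_1$-like and non-simple and that there is no simple independence relation on $\ELAFKkp$, completing the proof.
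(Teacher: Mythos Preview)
Your outline confuses two quite different independence relations in the paper. The relation $\ind^\ELA$ is \emph{not} defined via the exponential-algebraic closure pregeometry $\ecl$ or via $\etd$; rather, $A \ind_C^{\ELA} B$ means $\gen{AC}^\ELA \ind_{\gen{C}^\ELA}^{\td} \gen{BC}^\ELA$, where $\gen{-}^\ELA$ is the smallest ELA-subfield containing the set together with the kernel, and $\ind^{\td}$ is ordinary field-theoretic algebraic independence. So the basic properties and local character come from properties of $\ind^{\td}$ and of the $\gen{-}^\ELA$ closure operator, not from ``additivity of $\etd$''. The pregeometry $\ecl$ and exponential transcendence degree belong to the separate relation $\ind^{\etd}$ treated in Section~7.

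Relatedly, your plan for the independence theorem is aimed at the wrong target. Ax--Schanuel controls the predimension $\Delta$ and is the engine behind strong embeddings and $\ind^{\strong}$; it plays no role in the $3$-amalgamation for $\ind^\ELA$. The actual obstacle, as you correctly sense, is showing that the amalgam has no new kernel elements, but the tool that handles this is Shelah's lemma on stable independent systems of algebraically closed fields (as in \cite[Fact~XII.2.5]{shelah_classification_1990}): one first amalgamates the system as algebraically closed fields with $\ind^{\td}$-independence, extends the exponential maps additively on $F_{12}+F_{13}+F_{23}$, and then uses Shelah's decomposition to write any putative new kernel element as a sum of pieces lying in the $F_i$, each of which is forced into $K$ because the $F_i$ are ELA-fields and the embeddings are kernel-preserving. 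With the correct definition in hand, \textsc{Extension} is obtained by free amalgamation over an ELA-field base, and \textsc{Club Local Character} by a chain argument using \textsc{Base-Monotonicity} and \textsc{Finite Character} of $\ind^{\td}$. The failure of \textsc{Base-Monotonicity} is witnessed by an example analogous to the EA case, but arranged inside a free ELA-extension so the kernel is unchanged.
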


Strong embeddings of exponential fields are those which preserve the transcendence properties given by the Ax-Schanuel theorem, and they are particularly important for analytic exponential fields such as $\Rexp$ and $\Cexp$, and also for exponential fields of power series. Zilber's exponential field $\Bexp$ is constructed by amalgamating strong extensions.
The PhD thesis of the third author \cite{henderson_independence_2014} introduced strong independence and proved that it satisfied the properties of a stable independence relation given by Hyttinen and Kes\"al\"a \cite{hyttinenKesala_2006}. Here we publish these results for the first time, updated for the list of properties from \cite{kamsma_kim-pillay_2020}.
\begin{theorem}\thlabel{strong indep is stable}
The strong independence relation $\ind^\strong$ is the canonical independence relation on the category $\ELAF_{\mathrm{vfk},\strong}$ of ELA-fields with very full kernel and strong embeddings, and it is stable. 
\end{theorem}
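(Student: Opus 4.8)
The plan is to exhibit $\ind^\strong$ as a stable independence relation by verifying directly the axioms in the Kim--Pillay-style characterisation of \cite{kamsma_kim-pillay_2020}, and then to invoke the canonicity half of that theorem (\thref{fact:canonicity-of-independence}) to conclude both that $\ind^\strong$ is the unique, hence canonical, such relation and that the class $\ELAF_{\mathrm{vfk},\strong}$ is stable. Concretely, strong independence is the $d$-independence attached to the Hrushovski-style predimension coming from Ax--Schanuel: writing $\delta(X) = \td(X \cup \exp X) - \ldim_\Q(X)$ for the predimension and $\hull{\,\cdot\,}$ for the associated self-sufficient (strong) closure, one declares $A \ind^\strong_C B$ to hold when $\hull{CA}$ and $\hull{CB}$ are freely amalgamated over $\hull{C}$ inside $\hull{CAB}$, equivalently when $\delta$ is additive, $\delta(\hull{CAB}) = \delta(\hull{CA}) + \delta(\hull{CB}) - \delta(\hull{C})$, and the two sides are exponentially-algebraically free over the base. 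The first task is to record these definitions precisely and to check that $\ELAF_{\mathrm{vfk},\strong}$, with strong embeddings as morphisms, forms an abstract elementary category with the amalgamation and joint embedding properties; here amalgamation over a strong base is exactly the free amalgam just described, and the very full kernel hypothesis is what guarantees that the kernels of the two factors can be identified compatibly so that the amalgam again lies in the class.

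Next I would verify the \emph{algebraic} axioms, all of which reduce to additivity and submodularity of $\delta$. Invariance is immediate from the canonicity of $\td$, $\ldim_\Q$, $\exp$ and $\hull{\,\cdot\,}$. Submodularity of $\delta$ makes $\hull{\,\cdot\,}$ a well-behaved closure operator and yields monotonicity, base monotonicity and transitivity as the usual relativised additivity computations, while symmetry is simply symmetry of free amalgamation. Existence and extension over a strong base are the assertions that a free amalgam exists and that any datum can be realised freely over a given side, which is where amalgamation in the class is used. Local character follows from finite character of the predimension: the strong closure of a finite set is controlled by finitely much data over the base, so independence from $B$ over $C$ is witnessed by a small part of $B$.

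The crucial and hardest step is stationarity, the property that upgrades a simple relation to a stable one and singles out the stable relation uniquely. I would show that over a strong base $C$ the isomorphism type of a free amalgam $\hull{CAB}$ is determined by the isomorphism types of $\hull{CA}$ and $\hull{CB}$ over $\hull{C}$. The point is that once $\td$ and $\ldim_\Q$ of the amalgam are pinned down by additivity of $\delta$, no further exponential-algebraic relations can be introduced, so the free amalgam carries no \emph{new} exponential equations; and, critically, the very full kernel condition removes the one remaining source of ambiguity, namely how the kernels of the two factors are matched, since a very full kernel is rich enough that any two compatible identifications are conjugate by an automorphism fixing the base. This gives uniqueness of the nonforking extension, hence stationarity. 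The main obstacle is precisely this control of the kernel interaction: the remaining axioms are essentially additivity bookkeeping for $\delta$, whereas stationarity requires showing that \emph{very full kernel} is exactly the hypothesis making strong amalgamation canonical. With the full list verified, \thref{fact:canonicity-of-independence} yields that $\ind^\strong$ is the canonical independence relation on $\ELAF_{\mathrm{vfk},\strong}$, and since it is a stable one, that the class is stable.
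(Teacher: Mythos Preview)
Your overall architecture matches the paper's: verify the axioms of a stable independence relation one by one and then invoke \thref{fact:canonicity-of-independence}. The basic properties, \textsc{Invariance}, \textsc{Base-Monotonicity}, \textsc{Local Character}, and \textsc{Extension} are indeed essentially bookkeeping with the predimension and the free-amalgam description, as you say (though note: your absolute formula $\delta(\hull{CAB}) = \delta(\hull{CA}) + \delta(\hull{CB}) - \delta(\hull{C})$ is ill-posed, since the hulls are typically infinite-dimensional; one must work with the relative predimension $\Delta(\,\cdot\,/\,\cdot\,)$ throughout).

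There is, however, a genuine gap in your treatment of \textsc{Stationarity}, which is indeed the crux. Your diagnosis of where the very full kernel hypothesis enters is wrong in two places. First, amalgamation in $\ELAF_\strong$ holds with no kernel assumption whatsoever (this is \thref{strong amalg lemma}); very full kernel plays no role in showing the class is an AEC with AP. Second, and more seriously, there is no issue of ``matching the kernels of the two factors'': all arrows in the category are kernel-preserving, so the kernels of $\hull{CA}$, $\hull{CB}$ and the base are literally equal from the outset. The ambiguity that very full kernel resolves is not in identifying kernels but in the construction of the free ELA-closure $(-)^\ELA$ itself. When one adjoins a logarithm of some $b$, one must simultaneously choose a coherent system of roots $(b^{1/m})_{m\in\N^+}$, and over a general base two such choices need not be related by anything fixing the base; the condition that the reduced part of the kernel is all of $\hat\Z$ is exactly what guarantees enough kernel elements to translate between any two such systems. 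This is \thref{uniqueness of free extensions}, and it is what makes the free amalgam unique (\thref{uniqueness of free amalgam}) and hence yields \textsc{Stationarity} (\thref{prop:strong vfk stationarity}). Your sketch (``any two compatible identifications of kernels are conjugate by an automorphism fixing the base'') does not engage with this mechanism and would not, as stated, prove uniqueness of the free amalgam.
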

We would like to remove the restriction in this theorem to exponential fields with \emph{very full kernel}. This is a partial saturation condition, and in particular it implies that the kernel of the exponential map has size at least continuum. The most interesting exponential fields (at least in this context where the fields are algebraically closed, not ordered) have cyclic kernel as in the complex case, so certainly countable kernel. We expect them to sit in stable categories as well.

\begin{conjecture}
Theorem~\ref{strong indep is stable} holds for the category $\ELAF_\strong$, without the assumption of very full kernel.
\end{conjecture}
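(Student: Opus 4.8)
The plan is to verify directly that $\ind^\strong$ satisfies the full list of axioms for a canonical stable independence relation on the AEC $\ELAF_\strong$, auditing the proof of \thref{strong indep is stable} to isolate each appeal to very fullness of the kernel and replacing it by an argument valid for arbitrary kernels. First I would re-establish $\ELAF_\strong$ as an AEC under the strong substructure relation $\strong$: its L\"owenheim--Skolem number is $\aleph_0$ because a finitely generated strong ELA-hull is countable, and unions of $\strong$-chains are again strong by the submodularity and finite character of the predimension $\delta(\bar a) = \td(\bar a, e^{\bar a}) - \ldim_\Q(\bar a)$. None of this uses very fullness. The structural axioms for $\ind^\strong$ --- invariance, (base) monotonicity, transitivity, symmetry, normality and finite character --- are then proved exactly as in the very-full-kernel case, since they are assertions about $\delta$ and about field-theoretic free composites and never invoke saturation; I would simply cite those parts of the original argument.

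The three genuinely kernel-sensitive points are amalgamation of the class, the existence/extension axiom, and stationarity. For amalgamation I would show directly that any two strong extensions $B,C$ of a common strong submodel $A$ admit a strong amalgam inside $\ELAF_\strong$: take the free composite of $B$ and $C$ over $A$ as fields, equip it with the exponential forced by requiring no new additive--multiplicative relations, and pass to the ELA-closure, amalgamating the kernels as the pushout $\ker B \oplus_{\ker A} \ker C$ of $\Z$-modules. Strongness of the resulting embeddings follows from additivity of $\delta$ across a free composite, which is an instance of the Ax--Schanuel inequality. Crucially, dropping very fullness only removes a lower-bound constraint on the size of the kernel of the amalgam, so this construction is if anything simpler than in the vfk setting, and the same construction supplies existence and extension by realising the strong-free extension of a type over any $B \supseteq A$. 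Joint embedding and the absence of maximal models are immediate, the latter because every ELA-field has a proper strong extension.

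The main obstacle is \emph{stationarity}, which is exactly where very fullness was used as a saturation device to drive a back-and-forth. My plan is to prove uniqueness of the strong-free amalgam up to isomorphism over the base with no saturation, by showing that the Galois type of the amalgam over $A \cup B$ is pinned down by finitary strong data: the field composite is unique up to isomorphism over $\langle AB\rangle$ because $C$ is algebraically free from $B$ over $A$; the exponential on it is forced by freeness; and the kernel is the canonical pushout $\ker B \oplus_{\ker A} \ker C$, which involves no choices even when $\ker A$ is small or cyclic. Taking the strong ELA-closure of this canonical free amalgam is itself canonical by uniqueness of ELA-closures, so the two candidate amalgams are isomorphic over $A \cup B$. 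The delicate step I expect to require most care is verifying that over a \emph{non-saturated} base model the Galois type really is determined by this quantifier-free strong type --- a tameness statement for $\ELAF_\strong$ --- which is precisely the work that vfk silently performed; a naive descent from the established vfk case stalls here for the same reason, so the direct route via canonicity of the kernel amalgam is what makes the back-and-forth go through regardless of the kernel's size. Finally, local character follows from finite character of $\delta$ and the countable L\"owenheim--Skolem number, independently of vfk, and canonicity of $\ind^\strong$ is concluded by appeal to the characterisation of canonical stable independence in \cite{kamsma_kim-pillay_2020}.
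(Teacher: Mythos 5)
You should first be aware that the statement you set out to prove is stated in the paper as a \emph{conjecture}: the authors explicitly could not prove it, and section \ref{subsec:more-general-kernels} is devoted to explaining exactly where the difficulty lies. Your proposal founders at precisely that point.

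The gap is the assertion that ``taking the strong ELA-closure of this canonical free amalgam is itself canonical by uniqueness of ELA-closures.'' Uniqueness of the free ELA-extension $F^\ELA$ is \emph{false} in general without very full kernel --- it is not a saturation convenience silently invoked elsewhere, but the genuine obstruction. The paper exhibits a counterexample immediately after \thref{free ELA construction}: take $F = \Q^\alg(2\pi i)$ with $D(F) = 2\pi i\Q$ and adjoin a transcendental logarithm $a$ of $2$. The free construction must choose a coherent system of roots $(\exp(a/m))_{m \in \N^+}$ from among the continuum-many coherent systems $(\sqrt[m]{2})_{m \in \N^+}$ in $\Q^\alg$, and kernel translations $a \mapsto a + \mu$ realise only countably many of these, so there are non-isomorphic free ELA-extensions of $F$. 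This Kummer-theoretic choice is exactly what your ``exponential forced by requiring no new additive--multiplicative relations'' fails to force: freeness pins down the transcendence-theoretic data of each one-step extension but not which coherent root systems are realised. Very full kernel (via \thref{uniqueness of free extensions}) or finite-dimensionality over a countable base plus the Thumbtack Lemma (\thref{rem:uniqueness-countable-case}) are what restore uniqueness; the uncountable non-vfk case is where the authors say an excellence-style higher amalgamation argument, as in the construction of Zilber's field, would be needed --- and where they report being stuck. Your proposed back-and-forth for \textsc{Stationarity} therefore stalls at the same place, since \textsc{Stationarity} is equivalent to uniqueness of the free amalgam (\thref{uniqueness of free amalgam}, \thref{prop:strong vfk stationarity}).

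A secondary misdiagnosis compounds this: your kernel pushout $\ker_B \oplus_{\ker_A} \ker_C$ is vacuous, because arrows in $\ELAFstrong$ are by definition kernel-preserving, so $\ker_A = \ker_B = \ker_C$ throughout. Very fullness was never a ``lower-bound constraint on the size of the kernel of the amalgam''; it is a saturation hypothesis on the reduced part of the one fixed kernel, used exactly to absorb the root-system choices above. Your closing caveat --- that the Galois type over a non-saturated base must be shown to be determined by the quantifier-free strong data --- is the right instinct, but the ``canonicity of the kernel amalgam'' cannot supply it, because the non-canonicity lives in the coherent root systems over the fixed kernel, not in the kernel group itself. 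The remaining parts of your plan (basic axioms, amalgamation, extension, local character with $\lambda = \aleph_0$) do match the paper's proofs in Propositions \thref{prop:strong-independence-basic-properties}--\thref{prop: strong extension}, which indeed hold on all of $\ELAFstrong$; only \textsc{Stationarity} is at issue, and there your argument assumes what the conjecture asks.
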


The exponential-algebraic closure operator was proved to be a pregeometry on any exponential field in \cite{kirby_exponential_2010}. (It was known in the real case earlier.) 
It is the quasiminimal pregeometry on Zilber's exponential field $\Bexp$, and on the quasiminimal excellent class (a type of AEC) used to construct it. It follows that the associated independence relation $\ind^{\etd}$ is a stable independence relation on that AEC.
In this paper we show that $\ind^{\etd}$ is closely related to $\ind^\strong$ on \emph{any} exponential field:
\begin{theorem}
\thlabel{thm:ecl-independence-vs-ecf-independence}
Let $F$ be an exponential field and $A, B, C \subseteq F$. Then we have
\[
A \ind_C^{\etd, F} B \quad  \Longleftrightarrow \quad A \ind_{\ecl_F(C)}^{\strong, F} B.
\]
\end{theorem}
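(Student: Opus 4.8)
The plan is to reduce to the case where the base is exponentially-algebraically closed and then compare the two relations directly over such a base. First I would use that $\ind^{\etd}$ is the independence relation of the pregeometry $\ecl$, so it is invariant under closing the base: for any $A,B,C$ one has $A \ind^{\etd,F}_C B$ iff $A \ind^{\etd,F}_{\ecl_F(C)} B$, since pregeometry independence depends only on the closure of the parameter set. Writing $D \leteq \ecl_F(C)$, which is idempotent ($\ecl_F(D) = D$), it therefore suffices to prove that for an $\ecl$-closed set $D$ we have $A \ind^{\etd,F}_D B \iff A \ind^{\strong,F}_D B$.

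The next step is to observe that an $\ecl$-closed set is automatically strong. Recall that strong independence and the strong hull $\cl^{\strong}$ are governed by the Ax--Schanuel predimension $\delta$, and that by Kirby's theorem the exponential transcendence degree agrees with the associated predimension dimension, $\etd(\bar a / D) = d(\bar a/D) \leteq \min_{\bar c}\delta(\bar a \bar c / D)$. Since $d(\bar a/D) \leq \delta(\bar a/D)$ and $\etd \geq 0$ always, every tuple $\bar a$ from $F$ satisfies $\delta(\bar a/D) \geq d(\bar a/D) = \etd(\bar a/D) \geq 0$; hence $D$ is strong and $\cl^{\strong}(D) = D$. In particular, forming $\ind^{\strong}$ over $D$ involves no further closure of the base, which is exactly why the theorem is stated with $\ecl_F(C)$ rather than $C$: closing only to $\cl^{\strong}(C)$ would miss the predimension-zero part of $\ecl_F(C)$, and $\etd$-independence over $C$ need not agree with $\etd$-independence over $\cl^{\strong}(C)$.

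It then remains to identify, over the strong base $D$, the $\etd$-independence relation (additivity of $d$, i.e.\ $d(\bar a/D) = d(\bar a / BD)$ for all finite $\bar a \subseteq A$) with strong independence (free amalgamation of the strong hulls $\cl^{\strong}(AD)$ and $\cl^{\strong}(BD)$ over $D$, that is, field-theoretic independence together with additivity of $\delta$). One implication is routine: if the strong hulls are freely amalgamated over $D$ then $\delta$ is additive, so the minimal predimension $d$ is additive, giving $A \ind^{\etd,F}_D B$.

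The hard part is the converse: upgrading additivity of the exponential transcendence degree to full field-theoretic freeness of the strong hulls. The obstacle is that $\etd$-independence only records exp-transcendence, whereas strong independence also demands that $\cl^{\strong}(AD)$ and $\cl^{\strong}(BD)$ be algebraically disjoint over $D$. The strategy is to argue by contradiction through the predimension bookkeeping: assuming some field-theoretic dependence between the strong hulls over $D$, one shows --- using that $D$ is strong, the monotonicity $d(\bar a/BD) \leq d(\bar a/D)$, and the interaction of relative transcendence degree with relative $\Q$-linear dimension inside $\delta$ --- that the minimal predimension strictly drops, i.e.\ $d(\bar a / BD) < d(\bar a/D)$ for a suitable finite $\bar a \subseteq A$, contradicting $\etd$-independence. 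Carrying out this computation, reducing to finite tuples and tracking how adjoining $B$ to the base affects both $\td$ and $\ldim_{\Q}$ in $\delta$, is where the main work lies; it amounts to the standard equivalence between $d$-independence and free amalgamation in a predimension setting, here verified over the strong base $\ecl_F(C)$.
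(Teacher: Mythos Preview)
Your overall plan matches the paper's: reduce to an $\ecl$-closed base $D=\ecl_F(C)$, note that such a $D$ is strong (so $\hull{D}^\ELA=D$), and then compare $\etd$-independence with strong independence over $D$ via the predimension. Your identification of which direction is harder also agrees with the paper.

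There is, however, a genuine gap in your sketch of the hard direction. Strong independence over $D$ is not just ``field-theoretic independence of the strong hulls together with additivity of $\delta$'': by \thref{strong indep equiv form} it is equivalent to the two conditions (STR1$'$) $A,\exp(A)\ind^{\td}_D B,\exp(B)$ \emph{and} (STR2$'$) $A\cup B\strong F$. The ``additivity of $\delta$'' you invoke (i.e.\ $\Delta(\bar a/B)=\Delta(\bar a/D)$ for finite $\bar a$ in the strong hull of $AD$) is already a consequence of (STR1$'$) together with $A\cap B=D$; it does not capture (STR2$'$), which is a statement about \emph{arbitrary} finite tuples from $F$, not only those lying in $A$. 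Your contradiction argument explicitly starts from ``assuming some field-theoretic dependence between the strong hulls over $D$'', so it only treats the failure of (STR1$'$). You never address the situation where (STR1$'$) holds but (STR2$'$) fails, and this case does occur and needs a separate argument.

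The paper handles this missing case as follows: if $A\exp(A)\ind^{\td}_D B\exp(B)$ but $A\cup B$ is not strong in $F$, one finds a finite tuple $a$ from $A$, $\Q$-linearly independent over $D$ (hence over $B$), with $Da\strong F$ but $Ba$ not strong in $F$. Then $\etd(a/B)<\Delta(a/B)$ by \thref{etd and predim}, while (STR1$'$) and $\ldim(a/B)=\ldim(a/D)$ give $\Delta(a/B)=\Delta(a/D)$, and $Da\strong F$ gives $\Delta(a/D)=\etd(a/D)$; chaining these yields $\etd(a/B)<\etd(a/D)$, contradicting $\etd$-independence. Once you add this second case, your argument for the (STR1$'$)-failure case and your treatment of the easy direction are along the same lines as the paper's.
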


We have categories of exponential fields which are stable and which are NSOP$_1$, non-simple. A natural question which we have not managed to answer is:
\begin{question}
Is there a category of exponential fields which is simple, unstable?
\end{question}

\subsection{Overview of the paper}
In section~2 we give the background on independence relations, and the Kim--Pillay style theorems in our context of AECs. 
Section~3 introduces the four types of embeddings of exponential fields we use: general embeddings, those which preserve the kernel, strong, and closed embeddings. We show that the various categories produced are AECs with the important properties of amalgamation, joint embedding, and intersections.

The independence relations $\ind^\EA$ and $\ind^\ELA$ are defined and compared in sections~4 and~5, and Theorems~\ref{thm:ea-nsop1-like-independence} and~\ref{ELA-indep is NSOP1} are proved there.

In section~6, we define strong independence and prove Theorem~\ref{strong indep is stable}.
Finally, Theorem~\ref{thm:ecl-independence-vs-ecf-independence} is proved in section~7.

\subsection{Categories versus monster models}
Both in the classical setting of complete first-order theories, and when working with AECs, model theorists often use the ``Monster model convention'', that all models considered are submodels of a suitably large saturated ``monster'' model. We do not do that, but take the more algebraic approach of instead working directly with categories of exponential fields. Given that our categories have amalgamation, this change is really one of emphasis rather than being substantial, but it makes several things more convenient.

We take care to separate the properties of an independence relation which apply to an individual structure (in this paper an exponential field), those properties which relate to embeddings of structures (\textsc{Invariance}), and the properties which relate to a category of structures (or in the classical setting, to the common complete theory of the structures). An exponential field may lie in different categories with different independence relations and incompatible monster models, but our approach allows us to make sense of all four independence relations on any exponential field, and so to compare them.

Another idea we try to stress is the close relationship between independence relations and amalgamation, and particularly free amalgamation. This idea is somewhat hidden by the monster model convention.

Thirdly, characterising a monster model of a theory (or of an AEC) involves classifying (and perhaps axiomatising) the existentially closed models. Although we can do this for our AECs, we realised that existential closedness plays no role here, so the models in our categories are not existentially closed, although they usually satisfy some much weaker closure condition related to amalgamation. This highlights an algebraic side to the idea of independence relations, and indeed we make sense of these categories being stable, simple, or NSOP${}_1$-like, with no reference to the existentially closed models or to any theory axiomatising them.\\
\\
\noindent
\textbf{Acknowledgements.} We thank the anonymous referee whose suggestions improved the presentation of this paper.

\section{Independence relations and the stability hierarchy}

In this section we set out our model-theoretic conventions, notation, and terminology for independence relations in a category of structures.

\subsection{Independence relations on a structure}

\begin{definition}
\thlabel{def:independence-relation}
Let $M$ be any structure.  An \emph{independence relation} on $M$ is a ternary relation $\ind$ on subsets of $M$, which satisfies the six basic properties listed below. If $(A,B,C)$ is in the relation we say that \emph{$A$ is independent from $B$ over $C$ in $M$} and write
\[
A \ind_C^M B \qquad \text{ or just \quad} A \ind_C B \quad \text{ if $M$ is clear from the context}.
\]
In the properties below, and throughout the paper, we use the standard model-theoretic convention that juxtaposition of sets or tuples means union or concatenation. For example, $BC$ means $B \cup C$.

{\noindent\bf Basic properties} For all $A, B, C, D \subseteq M$ we have:
\begin{description}
\item[\textsc{Normality}] If $A \ind_C B$ then $A \ind_C BC$.

\item[\textsc{Existence}] $A \ind_C C$.

\item[\textsc{Monotonicity}] If $A \ind_C B$ and $D \subseteq B$ then $A \ind_C D$.

\item[\textsc{Transitivity}] If $A \ind_C D$ and $A \ind_D B$ with $C \subseteq D$ then $A \ind_C B$.

\item[\textsc{Symmetry}] If $A \ind_C B$ then $B \ind_C A$.

\item[\textsc{Finite Character}] If for all finite $D \subseteq A$ we have $D \ind_C B$  then $A \ind_C B$.
\end{description}
\end{definition}
\begin{definition}

One additional property which will often hold is
\begin{description}
\item[\textsc{Base-Monotonicity}] If $A \ind_C B$ and $C \subseteq D \subseteq B$ then $A \ind_D B$.
\end{description}
\end{definition}

\begin{examples}
If $\cl$ is any pregeometry on $M$, with associated dimension function $\dim$, then it is well-known (and easy to verify) that defining \
\[A\ind^{\dim}_C B  \text{\quad  if and only if   for every finite $D \subseteq A$ }, \dim(D/BC) = \dim(D/C)\]
gives an independence relation on $M$ satisfying \textsc{Base-Monotonicity}. 

In particular, on a $\Q$-vector space $M$ we have $A \ind_C^\Qlin B$ if the following equivalent conditions hold:
\begin{enumerate}[label=(\roman*)]
\item for every finite $D \subseteq A$ we have $\ldim_{\Q}(D/BC) = \ldim_{\Q}(D/C)$;
\item $\linspan(AC) \cap \linspan(BC) = \linspan(C)$.
\end{enumerate}
Here and throughout the paper, $\linspan(A)$ will always mean the $\Q$-linear span of $A$, in some ambient $\Q$-vector space (often a field) which will be clear from context.

On any field $F$, we have \emph{field-theoretic algebraic independence} $\A \ind_C^{\td} B$ where the dimension notion is transcendence degree, $\td$, and the pregeometry is (field-theoretic) relative algebraic closure.
\end{examples}

In any exponential field, there is an exponential algebraic closure pregeometry, with dimension notion called exponential transcendence degree. Unlike field-theoretic algebraic closure, the definition is not quantifier-free and cannot be reduced to one variable at a time, but comes from an algebraic version of the implicit function theorem.

\begin{definition}
Let $F$ be any exponential field.

We say $a_1 \in F$ is \emph{exponentially algebraic} over a subset $B \subseteq F$ iff for some $n \in \N$ there are:
$\bar{a} = (a_1,\ldots,a_n) \in F^n$,  polynomials $p_1,\ldots,p_n \in \Z[\bar{X},e^{\bar{X}},\bar Y]$ , and a tuple $\bbar$ from $B$
such that setting $f_i(\abar) = p_i(\abar,e^{\abar},\bbar)$ we have
\begin{itemize}
 \item $f_i(\bar{a}) = 0$ for each $i=1,\ldots,n$, and
\item $\begin{vmatrix}
\frac{\partial f_1}{\partial X_1} & \cdots & \frac{\partial f_1}{\partial X_n} \\
\vdots &\ddots & \vdots \\
\frac{\partial f_n}{\partial X_1} & \cdots & \frac{\partial f_n}{\partial X_n} \\
\end{vmatrix}(\bar{a}) \neq 0$.
\end{itemize}
where $\frac{\partial}{\partial X_i}$ denotes the formal partial differentiation of exponential polynomials.

Otherwise, $a_1$ is \emph{exponentially transcendental} over $B$ in $F$.

We write $\ecl_F(B)$ for the exponential-algebraic closure of $B$ in $F$. It is always an exponential subfield, (field-theoretically) relatively algebraically closed in $F$, and closed under any logarithms which exist in $F$.
\end{definition}

By \cite[Theorem~1.1]{kirby_exponential_2010}, exponential-algebraic closure is a pregeometry on any exponential field. The associated dimension notion is known as \emph{exponential transcendence degree}, and we denote the associated independence relation by $A \ind_C^{\etd} B$.

\subsection{Independence relations on categories of structures}
 In model theory, we usually define independence relations not just on one model, but on all models of a complete theory, and require them to be compatible under taking elementary extensions. In this paper we will generalise this approach by
\begin{enumerate}
\item working with models of a theory which may not be complete, and which may not be defined in any particular logic, and
\item by specifying which extensions we consider, not just elementary extensions.
\end{enumerate}

We consider concrete categories of structures, meaning categories in which every object has an underlying set, and every arrow has an underlying function which determines the arrow.
In this paper, the objects will always be exponential fields, with the arrows being embeddings of exponential fields, sometimes with additional restrictions.

\begin{definition}
Let $\calC$ be a concrete category of structures. An independence relation on $\calC$ consists of an independence relation $\ind^M$ for each object $M \in \calC$, which together satisfy:
\begin{description}
\item[\textsc{Invariance}] For any $f: M \to N$ in $\calC$ and any subsets $A,B,C \subseteq M$ we have 
\[A \ind_C^M B \quad \text{ iff } \quad f(A) \ind_{f(C)}^N f(B).\]
\end{description}
\end{definition}
The categories of structures and extensions we consider will all have amalgamation and unions of chains, so we can construct monster models in them in any of the usual ways. Our definition of \textsc{Invariance} is then equivalent to the common definition of invariance under automorphisms of the monster model.

\subsection{Abstract elementary classes}
In \cite{kamsma_kim-pillay_2020, kamsma_nsop_1-like_2022}, independence relations were developed in the very general setting of \emph{Abstract Elementary Categories (AECats)}, a class of accessible categories which are not required to be concrete. They are a generalisation of Shelah's notion of Abstract Elementary Class (AEC), which itself generalises categories of models of theories in a wide range of logics. All the examples we will consider in this paper are AECs, so we define those (albeit in more category-theoretic language than Shelah's original definition).

\begin{definition}
An \emph{abstract elementary class (AEC)} is a category $\calC$ such that for some first-order vocabulary $L$, every object is an $L$-structure (which we call a \emph{model} in $\calC$) and every arrow is an $L$-embedding, satisfying the following properties:
\begin{enumerate}
\item $\calC$ is closed under isomorphisms: if $A \in \calC$ and $f: A \cong B$ is an $L$-isomorphism then $B$ and $f$ are in $\calC$.
\item Coherence: If $A\subseteq B \subseteq C$ are objects in $\calC$ with the inclusions $A \into C$ and $B \into C$ both in $\calC$, then also the inclusion $A \into B$ is in $\calC$.
\item $\calC$ is closed under unions of chains: for any ordinal $\lambda$, if $(A_i)_{i<\lambda}$ are in $\calC$ such that for all $i<j<\lambda$ we have $A_i \subseteq A_j$ with the inclusion functions in $\calC$, then $A \leteq \bigcup_{i< \lambda}A_i \in \calC$ and all inclusions $A_i \into A$ are in $\calC$. Furthermore, if all $A_i \subseteq B$ with inclusion maps in $\calC$ then the inclusion $A \into B$ is also in $\calC$. (It is a standard consequence that $\calC$ is then also closed under unions of directed systems \cite[Corollary 1.7]{adamek_locally_1994}.)

\item The Downwards L\"owenheim--Skolem property: There is an infinite cardinal $\kappa$ (the smallest such being called the LS-cardinal of $\calC$), such that for every $A \in \calC$ and every subset $S\subseteq A$, there is a subobject $B\into A$ such that $S \subseteq B$ and $|B| \leqslant |S| + \kappa$.
\end{enumerate}
\end{definition}

We will be considering AECs which have amalgamation and intersections in the sense below, in most cases by choosing the objects to be exactly the amalgamation bases from a larger category.
\begin{definition}
An object $A$ in a category $\calC$ is said to be an \emph{amalgamation base} if for every pair of arrows $B \leftarrow A \to C$ there are arrows $B \to D \leftarrow C$ making the relevant square commute.

A category $\calC$ is said to have the \emph{amalgamation property (AP)}, or be a category \emph{with amalgamation}, if every object is an amalgamation base.

A category $\calC$ has the \emph{Joint Embedding Property (JEP)} if for every two objects $A,B$, there is an object $D$ and arrows $A \to D \leftarrow B$. In the presence of AP, having such a common extension is an equivalence relation on the objects in the category. We call the equivalence classes of this relation \emph{JEP-classes}.

We say that an AEC $\calC$ \emph{has intersections} if for any object $A$, and any set $(S_i)_{i \in I}$ of subobjects of $A$, the intersection $\bigcap_{i\in I} S_i$ is also a subobject of $A$.
\end{definition}

\begin{definition}
Let $\calC$ be an AEC with amalgamation, and let $M_1$ and $M_2$ be models in $\calC$. Possibly infinite tuples $a_1 \in M_1$ and $a_2 \in M_2$ are said to have the same \emph{Galois type} if there is a model $N$ and embeddings $g_i:M_i \into N$, in $\calC$ such that $g_1(a_1) = g_2(a_2)$. Using amalgamation it is easy to see that this gives an equivalence relation on pairs $(a;M)$. We write $\gtp(a; M)$ for the Galois type (the equivalence class).

We can also define Galois types over sets of parameters as a special case. Suppose that $a_i = b_ic$ for $i=1,2$, where $c$ is a tuple from $M_1$ and $M_2$. Then we write $\gtp(b_1/c ; M_1) = \gtp(b_2/c ; M_2)$ to mean $\gtp(b_1c ; M_1) = \gtp(b_2c ; M_2)$.

If $C$ is the common subset of $M_1$ and $M_2$ enumerated by $c$, we also write this as $\gtp(b_1/C ; M_1) = \gtp(b_2/C ; M_2)$.
\end{definition}
Note that if $M \into N$ is an extension of models in $\calC$ and $a \in M$ then we always have $\gtp(a;N) = \gtp(a;M)$, so where no confusion is likely to occur we will drop the $M$ from the notation and just write $\gtp(a)$.

There is a simple characterisation of Galois types in AECs with amalgamation and intersections.
\begin{lemma}\thlabel{Galois types from intersections}
Suppose that $\calC$ is an AEC with amalgamation and intersections. Let  $f_1: C \into A$ and $f_2: C \into B$ be embeddings in $\calC$, and let $a \in A$ and $b \in B$ be tuples. Let $[Ca]$ be the intersection of all the subobjects of $A$ containing $C \cup a$, and likewise $[Cb]$. Then $\gtp(a/C) = \gtp(b/C)$ if and only if there is an isomorphism $[Ca] \iso [Cb]$ fixing $C$ pointwise and taking $a$ to $b$.
\end{lemma}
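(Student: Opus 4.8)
The plan is to prove both directions of the equivalence $\gtp(a/C) = \gtp(b/C)$ iff there is an isomorphism $[Ca] \iso [Cb]$ fixing $C$ pointwise and sending $a$ to $b$, using the definition of Galois type via a common extension together with the \textsc{Coherence} and intersection properties of the AEC. The key observation throughout will be that the objects $[Ca]$ and $[Cb]$ are genuine subobjects of $A$ and $B$ respectively (this is exactly where \emph{having intersections} is used), so that the inclusions $[Ca] \into A$ and $[Cb] \into B$ are arrows in $\calC$.

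\textbf{The easy direction ($\Leftarrow$).} Suppose we are given an isomorphism $h : [Ca] \iso [Cb]$ fixing $C$ pointwise with $h(a) = b$. First I would amalgamate $A$ and $B$ over the isomorphism $h$: since $\calC$ has amalgamation, applying AP to the span $A \hookleftarrow [Ca] \xrightarrow{h} [Cb] \hookrightarrow B$ yields a model $N$ and embeddings $g_1 : A \into N$, $g_2 : B \into N$ with $g_1|_{[Ca]} = g_2 \circ h|_{[Ca]}$. In particular $g_1(a) = g_2(h(a)) = g_2(b)$ and $g_1|_C = g_2|_C$, so by the definition of Galois type $\gtp(a/C ; A) = \gtp(b/C ; B)$, as required. (One must be slightly careful that AP as stated applies to a span of arrows out of a common object; here we first transport along $h$ to view $[Ca]$ as a common subobject, or equivalently invoke the category-theoretic pushout over the isomorphism.)

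\textbf{The main direction ($\Rightarrow$).} Suppose $\gtp(a/C) = \gtp(b/C)$, so there exist a model $N$ and embeddings $g_1 : A \into N$, $g_2 : B \into N$ with $g_1(a) = g_2(b)$ and $g_1|_C = g_2|_C$. I would work inside $N$ and consider the images $g_1([Ca])$ and $g_2([Cb])$, which are subobjects of $N$ (using \textsc{Coherence}: the composite $[Ca] \into A \into N$ is an arrow, and its image is a subobject). The strategy is to show these two images coincide as subsets of $N$; then $g_2^{-1} \circ g_1$ restricts to an isomorphism $[Ca] \iso [Cb]$ fixing $C$ and sending $a \mapsto b$. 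To prove equality of the images, observe that $g_1([Ca])$ is a subobject of $N$ containing $g_1(C) = g_2(C)$ and $g_1(a) = g_2(b)$; hence it contains the intersection $[g_2(C)g_2(b)]$ computed in $N$. Conversely $g_2([Cb])$ is a subobject of $N$ containing the same set $g_2(C) \cup \{g_2(b)\}$. The point is that $g_1$ and $g_2$ are embeddings, so they take the intersection defining $[Ca]$ (respectively $[Cb]$) to the corresponding intersection computed in $N$ over the common data $g_2(C) \cup \{g_2(b)\}$; thus both images equal the minimal subobject of $N$ containing $g_2(C) \cup \{g_2(b)\}$, forcing $g_1([Ca]) = g_2([Cb])$.

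\textbf{Main obstacle.} The delicate step is the last one: justifying that an embedding $g : A \into N$ in an AEC sends the minimal subobject $[Ca]$ of $A$ to the minimal subobject of $N$ over $g(C) \cup \{g(a)\}$. This is not purely formal, because a priori the minimal subobject in $N$ could be strictly smaller than $g([Ca])$. The resolution uses \textsc{Coherence} together with intersections: given any subobject $S \into N$ with $g(C) \cup \{g(a)\} \subseteq S$, the preimage $g^{-1}(S \cap g(A))$ is a subobject of $A$ (here \textsc{Coherence} guarantees the inclusion into $A$ is an arrow once we know $g(A) \cap S \into N$ and $g(A) \into N$ are), it contains $C \cup a$, and hence contains $[Ca]$; applying $g$ shows $g([Ca]) \subseteq S$, giving the minimality. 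I would spell this preimage argument out carefully, as it is the one place where the abstract hypotheses (rather than just amalgamation) are genuinely invoked, and it is easy to gloss over the verification that the relevant preimages are subobjects in $\calC$.
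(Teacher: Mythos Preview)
Your argument is correct and is exactly the unpacking of what the paper means by ``straightforward from the definitions''; the paper gives no proof beyond that phrase, so there is nothing to compare against beyond noting that you have carefully spelled out the one natural argument. In particular your handling of the delicate step---using intersections to get $S \cap g(A)$ as a subobject of $N$ and then \textsc{Coherence} to conclude it is a subobject of $g(A)$, hence pulls back to a subobject of $A$ containing $C \cup a$---is the right way to justify that $g_1([Ca])$ really is the minimal subobject of $N$ over $g_1(C) \cup \{g_1(a)\}$.
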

\begin{proof}
Straightforward from the definitions.
\end{proof}

\begin{remark}
Note that if $\calC$ is an AEC with amalgamation and $\ind$ is an independence relation on $\calC$, then the \textsc{invariance} property is equivalent to saying that if $A \ind^M_C B$ and we have $A',B',C' \subseteq M'$ such that (for any choice of enumerations of $A$, $B$, $C$, $A'$, $B'$, and $C'$ as  tuples) $\gtp(A B  C;M) =  \gtp(A'  B'  C';M')$  then $A' \ind_{C'}^{M'} B'$.
\end{remark}

\subsection{The independence relation hierarchy}

To give the hierarchy of stable, simple, and NSOP$_1$-like independence relations, we consider additional properties for an independence relation on an AEC with amalgamation. We first recall the definition of a \emph{club set} in a suitable part of a powerset.  

\begin{definition}
\thlabel{def:club-set}
Let $\lambda$ be a regular cardinal and $X$ any set. We write $[X]^{< \lambda} = \{ Y \subseteq X : |Y| < \lambda \}$. We call a family of subsets $\calB \subseteq [X]^{< \lambda}$:
\begin{itemize}
\item \emph{unbounded} if for every $Z \in [X]^{< \lambda}$ there is $Y \in \calB$ with $Z \subseteq Y$.
\item \emph{closed} if for every chain $(Y_i)_{i < \gamma}$ in $\calB$ (i.e. $i \leq j < \gamma$ implies $Y_i \subseteq Y_j$) with $\gamma < \lambda$ we have that $\bigcup_{i < \gamma} Y_i \in \calB$.
\item a \emph{clubset} if $\calB$ is closed and unbounded.
\end{itemize}
\end{definition}

\begin{definition}[Additional properties for an independence relation] \ \\
\thlabel{def:independence-relation-advanced-properties}
The tuples below are allowed to be infinite.
\begin{description}

\item[\textsc{Club Local Character}] There is a cardinal $\lambda$ such that for any model $M$ in $\calC$, any finite subset $A \subseteq M$ and any subset $B \subseteq M$ there is a clubset $\calB \subseteq [B]^{< \lambda}$ such that $A \ind_{B_0}^M B$ for all $B_0 \in \calB$.

\item[\textsc{Extension}] If $a \ind_C^M B$ and $B \subseteq B' \subseteq M$ then there is an extension $M \into N$ in $\calC$ and $a' \in N$ such that $a' \ind_C^N B'$ and $\gtp(a'/BC; N) = \gtp(a/BC; M)$.

\item[\textsc{3-amalgamation}] Suppose we are given a commuting diagram in $\calC$ consisting of the solid arrows below
\[
\begin{tikzcd}[row sep=tiny]
                                   & M_{13} \arrow[rr, dashed] &                           & N                         \\
M_1 \arrow[ru] \arrow[rr]          &                           & M_{12} \arrow[ru, dashed] &                           \\
                                   & M_3 \arrow[uu] \arrow[rr] &                           & M_{23} \arrow[uu, dashed] \\
M \arrow[ru] \arrow[rr] \arrow[uu] &                           & M_2 \arrow[uu] \arrow[ru] &                          
\end{tikzcd}
\]
Suppose furthermore that $M_1 \ind^{M_{12}}_M M_2$, $M_2 \ind^{M_{23}}_M M_3$ and $M_3 \ind^{M_{13}}_M M_1$. Then we can find $N$ together with the dashed arrows, making the diagram commute and such that $M_1 \ind_M^N M_{23}$.

\item[\textsc{Stationarity}] Let $M \subseteq N$ be models in $\calC$. If we have $a_1 \ind_M^N B$, $a_2 \ind_M^N B$ and $\gtp(a_1/M; N) = \gtp(a_2/M; N)$ then $\gtp(a_1/MB; N) = \gtp(a_2/MB; N)$.
\end{description}
\end{definition}

\begin{definition}
\thlabel{def:independence-relations}
Suppose that $\ind$ is an independence relation on an AEC with amalgamation $\calC$. We say that $\ind$ is:
\begin{itemize}
\item an \emph{NSOP$_1$-like independence relation} if it also satisfies \textsc{Club Local Character}, \textsc{Extension} and \textsc{3-amalgamation};
\item a \emph{simple independence relation} if in addition it satisfies \textsc{Base-Monotonicity};
\item a \emph{stable independence relation} if in addition it satisfies \textsc{Stationarity}.
\end{itemize}
\end{definition}
In particular we have for an independence relation that being stable implies being simple implies being NSOP$_1$-like.

\begin{remarks}
\thlabel{indep-properties-remark}
~
\begin{enumerate}
\item The usual formulation of \textsc{Local Character} requires some cardinal $\lambda$ such that for all $A, B \subseteq M$ where $A$ is finite there is some $B_0 \subseteq B$ with $|B_0| < \lambda$ such that $A \ind_{B_0}^M B$. In the presence of \textsc{Base-Monotonicity} this implies \textsc{Club Local Character}, by considering the clubset
\[
\{ B_1 \subseteq B : |B_1| < \lambda, B_0 \subseteq B_1 \}.
\]
In NSOP$_1$-like independence relations the property \textsc{Base-Monotonicity} may not hold, but one insight of \cite{kaplan_local_2019} is that \textsc{Club Local Character} captures what is necessary for applications.

\item It is well known for classical first-order logic that the \textsc{3-amalgamation} property follows from the rest of the properties of a stable independence relation. For a proof covering the generality of AECs, see \cite[Proposition 6.16]{kamsma_kim-pillay_2020}.

\item Our formulation of 3-amalgamation is at first sight slightly different from that in \cite{kamsma_kim-pillay_2020, kamsma_nsop_1-like_2022}: there $M_1$, $M_2$ and $M_3$ would not necessarily be models and $M$ would not necessarily factor through them. However, modulo the basic properties in \thref{def:independence-relation} together with a repeated application of \textsc{Extension} the two versions are easily seen to be equivalent.

\item For a complete first-order theory $T$, if there is a simple or stable independence relation such that the cardinal $\lambda$ for local character is $\aleph_0$ then the theory is supersimple or superstable respectively. We will show in \thref{prop:strong-local-character} that our notion of strong independence has local character with cardinal $\aleph_0$. However these notions of superstability and supersimplicity are not so well-developed beyond the first-order setting so we do not immediately get any further conclusions.

\item The hierarchy of NSOP$_1$-like --- simple --- stable can be extended by adding \emph{stable and coming from a pregeometry} (such as the quasiminimal case, or the uncountably categorical case) but that does not seem to correspond to axioms on the independence relation in the same style.
\end{enumerate}
\end{remarks}

\begin{examples}
The $\ind^\Qlin$ relation defined earlier satisfies \textsc{invariance} for embeddings of $\Q$-vector spaces, and is well-known to give a stable independence relation on the category of $\Q$-vector spaces and their embeddings.

More generally, if $T$ is a strongly minimal theory then the independence relation coming from its pregeometry is a stable (indeed superstable) independence relation.

The independence relation $\ind^{\td}$ on a field satisfies \textsc{invariance} for field embeddings and gives a stable independence relation on the category of fields and field embeddings. This is almost a strongly minimal example; the additional content is that there is no need to mention algebraically closed fields, or to fix the characteristic.
\end{examples}

The following fact tells us that there can be at most one nice enough independence relation on an AECat.
\begin{fact}[Canonicity of independence, {\cite[Theorem 1.3]{kamsma_nsop_1-like_2022}}]
\thlabel{fact:canonicity-of-independence}
Let $\calC$ be an AEC with the amalgamation property and suppose that $\ind$ is a stable or a simple independence relation on $\calC$. Suppose furthermore that $\ind^*$ is an NSOP$_1$-like independence relation on $\calC$. Then $\ind = \ind^*$ over models. That is for $M \into N$ in $\calC$ and $ A, B \subseteq N$ we have $A \ind_M^N B$ iff $A \ind_M^{*, N} B$.
\end{fact}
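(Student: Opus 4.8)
The plan is to prove the two inclusions $\ind^* \subseteq \ind$ and $\ind \subseteq \ind^*$ separately, over models. Throughout I would work with Galois types rather than with the underlying elements (using \thref{Galois types from intersections} and \textsc{Invariance}), and reduce the left-hand side to finite tuples via \textsc{Finite Character}, so that it suffices to compare $A \ind_M^N B$ and $A \ind_M^{*,N} B$ for finite $A$. The asymmetry between the two hypotheses is that $\ind$ carries the full simple/stable calculus — in particular \textsc{Base-Monotonicity} (since stable implies simple) and, in the stable case, \textsc{Stationarity} — whereas $\ind^*$ is only assumed NSOP$_1$-like and may fail \textsc{Base-Monotonicity}, so for $\ind^*$ we have only \textsc{Club Local Character} available.

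For the inclusion $\ind^* \subseteq \ind$ I would argue by contradiction from the independence theorem. Assuming $A \ind^*_M B$ but $A \nind_M B$, the failure of $\ind$-independence, together with \textsc{Extension} and \textsc{Base-Monotonicity} of $\ind$, produces an $M$-indiscernible $\ind$-Morley sequence $(B_i)_{i<\omega}$ with $\gtp(B_i/M)=\gtp(B/M)$ that witnesses this failure, in the sense that no single copy of $A$ realises $\gtp(A/MB_i)$ simultaneously for all $i$. On the other hand, \textsc{3-amalgamation} for $\ind^*$, applied along the sequence to the $\ind^*$-independent datum $A \ind^*_M B$, lets us amalgamate the types $\gtp(A/MB_i)$ into a single realisation, contradicting the previous sentence. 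This is the AEC analogue of the classical fact that the independence theorem for the coarser relation rules out dividing for the finer one.

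For the inclusion $\ind \subseteq \ind^*$ I would run a long-Morley-sequence argument governed by \textsc{Club Local Character} of $\ind^*$. Starting from $A \ind_M B$, I would use \textsc{Extension} together with \textsc{Stationarity} (stable case), or \textsc{3-amalgamation} and \textsc{Base-Monotonicity} (simple case), to build, for a sufficiently large regular $\kappa$, an $M$-indiscernible $\ind$-Morley sequence $(B_i)_{i<\kappa}$ with $\gtp(AB_i/M)=\gtp(AB/M)$ for every $i$, so that the configuration $A \ind_M B$ is copied cofinally while the sequence stays independent from $A$. Applying \textsc{Club Local Character} of $\ind^*$ to the finite tuple $A$ over $\bigcup_{i<\kappa} B_i$ yields a small base $C_0$ with $|C_0|<\lambda$ and $A \ind^*_{C_0}\bigcup_i B_i$; choosing $\kappa$ larger than $\lambda$ and exploiting indiscernibility, some $B_i$ lies outside the support of $C_0$, and a homogeneity argument lets us move $C_0$ into $M\cup\{B_j : j\neq i\}$ and then descend the base to $M$, giving $A \ind^*_M B_i$ and hence $A \ind^*_M B$ by conjugacy.

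The main obstacle is twofold. First, all of the Morley-sequence constructions must be carried out purely from the AEC axioms (amalgamation, unions of chains, L\"owenheim--Skolem) together with the independence properties, without first-order compactness or an ambient monster model; this is where care is needed to produce genuinely indiscernible sequences and to realise the amalgamated types inside $\calC$. Second, and more delicate, is the base-reduction step in the inclusion $\ind \subseteq \ind^*$: since $\ind^*$ need not satisfy \textsc{Base-Monotonicity}, one cannot simply shrink $C_0$ down to $M$, and it is precisely the \emph{clubset} formulation of local character (rather than plain local character) that supplies a closed, $M$-anchored family of small bases compatible with the Morley sequence, as flagged in \thref{indep-properties-remark}. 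Handling these two points correctly is the crux; the remaining bookkeeping — the \textsc{Finite Character} reductions, transfer across \textsc{Invariance}, and \textsc{Symmetry} — is routine.
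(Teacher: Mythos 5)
First, a point of comparison: the paper does not prove this statement at all --- it is imported as a Fact, with the proof living in \cite[Theorem 1.3]{kamsma_nsop_1-like_2022}. So the relevant benchmark is Kamsma's proof, and against that your sketch, while it correctly reproduces the classical first-order Kim--Pillay outline (two inclusions, Morley sequences, independence theorem one way, local character the other), has genuine gaps at exactly the points where the AEC setting departs from first-order logic. The first is your repeated use of $M$-indiscernible Morley sequences: without compactness there is no Ramsey/Erd\H{o}s--Rado extraction of indiscernibles in an AEC, and the actual proof deliberately avoids them, working instead with \emph{isi-sequences} (independent sequences of initial segments) and replacing indiscernibility by cardinal pigeonhole arguments; this is precisely why \textsc{Club Local Character} is formulated with clubsets rather than single small bases. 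Your appeal to ``a homogeneity argument'' and to indiscernibility to relocate $C_0$ and descend the base to $M$ names the obstacle (no \textsc{Base-Monotonicity} for $\ind^*$) but does not supply an argument; saying the clubset formulation ``supplies an $M$-anchored family'' is a restatement of what must be proved. In the cited proof both inclusions are run in contrapositive through an auxiliary dividing notion (isi-dividing), so that no direct shrinking of the base for $\ind^*$ is ever attempted.

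The second, more structural gap is a mismatch of independence notions in your first inclusion. You build the sequence $(B_i)_{i<\omega}$ to be $\ind$-independent (using \textsc{Extension} and \textsc{Base-Monotonicity} of $\ind$, which is fine), but then apply \textsc{3-amalgamation} of $\ind^*$ ``along the sequence'': that property requires the relevant sides of each amalgamation square to be $\ind^*$-independent, and you only know $A \ind^*_M B$; that the $\ind$-independent sequence is also $\ind^*$-independent is essentially an instance of the equality you are trying to prove, so the argument as written is circular. Bridging this is the crux of Kamsma's proof, and it is exactly where the asymmetry of the hypotheses enters: the existence of a simple (or stable) relation yields the ``existence axiom for forking,'' which is what permits the comparison --- the paper flags this immediately after the Fact, noting that this is also why the theorem only holds over models and cannot compare two NSOP$_1$-like relations with current knowledge. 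Your sketch treats the two directions as symmetric applications of standard tools, whereas the actual proof hinges on this one-sided extra input. The dividing-witness step in your first direction is, by contrast, repairable without indiscernibles: if $A'$ realised $\gtp(A/MB_i)$ for all $i<\lambda$ along a long $\ind$-independent sequence, \textsc{Club Local Character} of $\ind$ plus \textsc{Transitivity} and \textsc{Symmetry} yield $A' \ind_M B_i$ for some $i$, contradicting $A \nind_M B$ by \textsc{Invariance} --- that is the pigeonhole replacement for Kim's lemma, and it is the shape of argument you should substitute throughout.
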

This statement does not allow for comparing two NSOP$_1$-like independence relations. This is intentional, because in order to do so with current knowledge, an extra assumption called the ``existence axiom for forking'' is required. The current statement avoids this, because having a simple independence relation implies the existence axiom for forking. We also only get the result over models, rather than arbitrary sets because we only require \textsc{3-amalgamation} rather than the full property called ``independence theorem'' in \cite{kamsma_nsop_1-like_2022}. However, the current statement is enough for our purposes.

\section{Categories of exponential fields}

\subsection{Categories with all exponential field embeddings}

We write $\ExpF$ for the category of exponential fields, with embeddings as arrows. We write $\EAF$ and $\ELAF$ for the full subcategories of EA-fields and ELA-fields.

\begin{prop}
\thlabel{prop:basic-categories-aecs}
The categories $\ExpF$, $\EAF$, and $\ELAF$ are AECs, both  $\EAF$ and $\ELAF$ have the Amalgamation Property, and $\ExpF$ and $\EAF$ have intersections.
\end{prop}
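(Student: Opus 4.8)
The plan is to verify the three clauses---AEC, amalgamation, and intersections---separately, with the exponential amalgamation being the only genuinely non-formal point. Throughout I fix the language $L$ of exponential rings (the ring language together with a unary function symbol for $\exp$), so that each of the three classes is elementary: being an E-field is captured by the field axioms, the characteristic-zero scheme, $\exp(0)=1$ and $\forall x\,\forall y\,(\exp(x+y)=\exp(x)\exp(y))$, while algebraic closedness and the existence of logarithms are first-order in the obvious way.

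For the AEC axioms, closure under isomorphism is immediate, and coherence is automatic since the restriction of an $L$-embedding to a substructure is again an $L$-embedding. For closure under unions of chains I would use that a directed union of (algebraically closed) fields is an (algebraically closed) field, that characteristic zero and existence of logarithms pass to unions, and that the component $\exp$ maps glue to a homomorphism on the union; the clause about inclusions into a common $B$ follows because every element of the union already lies in some term of the chain, where the embedding condition is known. For the downward L\"owenheim--Skolem property the LS-cardinal is $\aleph_0$ in all three cases: given $S\subseteq A$ I generate the least subobject containing $S$ by closing under the field operations and $\exp$ (for $\ExpF$), additionally under roots of polynomials (for $\EAF$), and additionally under choosing a logarithm of each nonzero element (for $\ELAF$); iterating these operations $\omega$ times yields a subobject of size at most $|S|+\aleph_0$.

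For amalgamation in $\EAF$ and $\ELAF$, take inclusions $A\subseteq B$ and $A\subseteq C$ of EA- (resp. ELA-) fields. Because $A$ is algebraically closed, $B\otimes_A C$ is a domain, so $D_0:=\mathrm{Frac}(B\otimes_A C)$ is a field containing $B$ and $C$ with $B^+\cap C^+=A^+$. On the subgroup $B^++C^+$ of $D_0^+$ I define a partial exponential by $\exp(b+c)=\exp_B(b)\exp_C(c)$; this is well defined because two representations of the same element differ by an element of $B^+\cap C^+=A^+$, on which $\exp_B$ and $\exp_C$ agree, and it is a homomorphism into $D_0^\times$ extending both given maps. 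It then remains to promote this partial E-field to a genuine ELA-field, for which I invoke the free ELA-closure of a partial E-field from \cite{kirby_finitely_2013}: this produces an ELA-field $D$ and E-field embeddings $B\to D$, $C\to D$ agreeing on $A$, and since the closure only extends $\exp$ and never changes its values on $B^+$ or $C^+$, these are embeddings over $A$. As an ELA-field is in particular an EA-field, the same $D$ serves as an amalgam in both categories. This last step---extending a partial exponential defined on $B^++C^+$ to a total one on an algebraically closed, logarithm-closed field without disturbing its values on $B$ and $C$---is the \emph{real obstacle}, and rather than redevelop the closure theory I would cite it from \cite{kirby_finitely_2013}.

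Finally, for intersections in $\ExpF$ and $\EAF$, let $(S_i)_{i\in I}$ be subobjects of an object $A$. Their intersection is a subfield, and it is closed under $\exp$ because $a\in\bigcap_i S_i$ forces $\exp(a)\in S_i$ for every $i$; so it is an exponential subfield. For $\EAF$ one further needs that an intersection of algebraically closed subfields of $A$ is algebraically closed, which holds because a polynomial over $\bigcap_i S_i$ splits in each $S_i$ and its finitely many roots in $A$ therefore lie in every $S_i$. In either case the inclusion into $A$ is an embedding of the appropriate sort, so the intersection is a subobject. The same argument fails for $\ELAF$, since logarithms are not unique and the intersection of two ELA-subfields need not contain a logarithm of each of its nonzero elements---which is precisely why intersections are claimed only for $\ExpF$ and $\EAF$.
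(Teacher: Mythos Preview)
Your argument is correct and matches the paper's approach, though you spell out explicitly what the paper defers to citations: the $\forall\exists$-axiomatisation for the AEC axioms, and \cite[Theorem~4.3]{haykazyan_existentially_2021} for amalgamation, whose proof is precisely your tensor-product-then-free-closure construction. One minor wrinkle worth fixing: the free ELA-closure of \cite{kirby_finitely_2013} (recalled here as \thref{free ELA construction}) requires the partial E-field to have \emph{full kernel}, which an arbitrary EA-field need not have---so for the $\EAF$ amalgamation you should instead invoke the free EA-extension $(-)^\EA$ of \thref{free EA construction}, which carries no kernel hypothesis (and still yields an EA-field amalgam), while your ELA-closure argument is fine as written for the $\ELAF$ case since an ELA base field automatically has full kernel.
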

\begin{proof}
We treat exponential fields as structures in the language $\langle+,\cdot,-,0,1,\exp \rangle$ of exponential rings, with $\exp$ as a unary function symbol. As we are considering all embeddings in this language, coherence and the downward L\"owenheim--Skolem property are immediate. Each category has an $\forall\exists$-axiomatisation in classical first-order logic, so it is closed under isomorphisms and unions of chains.

By Theorem~4.3 of \cite{haykazyan_existentially_2021} (see also the proof of \thref{prop:amalgam-base} below), the amalgamation bases of $\ExpF$ are precisely the EA-fields. As every exponential field extends to an EA-field and to an ELA-field, it follows that both  $\EAF$ and $\ELAF$ have the Amalgamation Property.

It is straightforward to see that $\ExpF$ and $\EAF$ have intersections.
\end{proof}

\begin{definition}
For an EA-field $F$ and a subset $A \subseteq F$ we write $\gen{A}_F^\EA$ for the smallest EA-subfield of $F$ containing $A$. Note that if $F_1 \subseteq F_2$ are both EA-fields and $A \subseteq F_1$ then $\gen{A}_{F_1}^\EA = \gen{A}_{F_2}^\EA$, so we will usually drop the subscript and write just $\gen{A}^\EA$.
\end{definition}

Note that the AEC $\EAF$ does not have JEP, but $F_1$ and $F_2$ lie in the same JEP-class if and only if $\gen{0}^\EA_{F_1} \iso \gen{0}^\EA_{F_2}$. 

In constructions of exponential fields it is often useful to consider the notion of a \emph{partial E-field}:  a field $F$ equipped with a $\Q$-linear subspace $D(F)$ of its additive group and a homomorphism $\exp_F : \langle D(F);+\rangle \to \langle F^\times;\times\rangle$. We consider partial E-fields as structures in the language of rings together with a binary predicate for the graph of the exponential map. 

\begin{definition}
For a partial E-field $F$ and a subset $A \subseteq D(F)$ we write $\gen{A}_F$ for the smallest partial E-subfield of $F$ containing $A$. That is, $\gen{A}_F$ is the field generated by $\linspan(A) \cup \exp(\linspan(A))$ and $D(\gen{A}_F) = \linspan(A)$. If $F_1 \subseteq F_2$ are both partial E-fields and $A \subseteq D(F_1)$ then $\gen{A}_{F_1} = \gen{A}_{F_2}$, so we may drop the subscript and write just $\gen{A}$.
\end{definition}

\begin{construction}[{See \cite[Constructions~2.7,2.9]{kirby_finitely_2013}}]\thlabel{free EA construction}
Let $F$ be a partial E-field. Then there is a \emph{free EA-field extension} $F^\EA$ of $F$, which is obtained from $F$ by taking a point $a \in F^\alg \setminus D(F)$ and adjoining an exponential $e^a$ to $F$, transcendental over $F$, and iterating. One can also get a free (total) E-field extension $F^\textup{E}$ of $F$ the same way, by taking only points $a \in F\setminus D(F)$ at each stage.
These extensions $F^\EA$ and $F^\textup{E}$ can easily be seen to be unique up to isomorphism as extensions of $F$.

The extensions $F^\EA$ and $F^\textup{E}$ of $F$ are free on no generators. One can also get free extensions of $F$ on generators $(x_i)_{i \in I}$ by taking $F_1 = F(x_i)_{i \in I}$, the field of rational functions over $F$, with $D(F_1)=D(F)$ and $\exp_{F_1} = \exp_F$, and then forming the extensions $F_1^\EA$ and $F_1^\textup{E}$.
\end{construction}
Here and in \thref{free ELA construction} we use the term ``free'' because this matches the intuition that no unnecessary algebraic or exponential relations are introduced. However, these constructions are not free in the traditional category-theoretic sense. See \cite[p948]{kirby_finitely_2013} for a further discussion.

\subsection{Kernel-preserving embeddings}

\begin{definition}
By the \emph{kernel} of an exponential field $F$, written $\ker_F$, we mean the kernel of the exponential map $\exp_F$.

We say that $F$ has \emph{standard kernel} if $\ker_F = \tau \Z$, an infinite cyclic group generated by $\tau$ which is transcendental, as in $\Cexp$ where $\tau=2\pi i$.

An embedding $f: F_1 \into F_2$ of exponential fields is \emph{kernel-preserving} if every element of the $\ker_{F_2}$ is in the image of $F_1$. (So the kernel is fixed set-wise, but not necessarily pointwise).

We say that an exponential field $F$ has \emph{full kernel} if it can be embedded in a kernel-preserving way into an ELA-field. (Equivalently, by Proposition~2.12 and Construction~2.13 of \cite{kirby_finitely_2013}, $F$ contains all roots of unity, and they are in the image of $\exp_F$.)
\end{definition}

Much as in \thref{free EA construction}, we can extend a partial E-field with full kernel to an ELA-field in a free way. We give more detail for this construction as we will use it later.

\begin{definition}
Let $F$ be a partial E-field with full kernel. A kernel-preserving partial E-field extension $F'$ is said to be a \emph{one-step free extension} of $F$ if we have $\ldim_
\Q(D(F')/D(F)) = 1$, and, for some (equivalently all) $a \in D(F') \setminus D(F)$ we have either:
\begin{itemize}
\item $a$ is algebraic over $F$ and $e^a$ is transcendental over $F$; or
\item $a$ is transcendental over $F$ and $e^a$ is algebraic over $F$.
\end{itemize}
\end{definition}

\begin{construction}\cite[Construction~2.13]{kirby_finitely_2013}\thlabel{free ELA construction}
Let $F$ be a partial E-field with full kernel, and $M$ an ELA-field extension of $F$ with the same kernel. We say that $M$ is \emph{a free ELA-extension} of $F$ if there is an ordinal-indexed continuous chain of partial E-fields
\[F = F_0 \into F_1 \into \cdots \into F_\alpha \into \cdots \into F_\lambda = M\]
such that each successor step is a one-step free extension.

It is easy to see that free ELA-extensions exist. We denote any such free ELA-extension of $F$ by $F^\ELA$.
\end{construction}
Unlike in the case of $F^\EA$, it is not obvious or even always true that $F^\ELA$ is unique up to isomorphism. For example, take $F = \Q^\alg(2\pi i)$ with $D(F) = 2\pi i\Q$, and $\exp(2\pi i/m)$ a primitive $m^{th}$ root of $1$. Then if we adjoin $a$ transcendental over $F$ such that $\exp(a) = 2$ then the sequence $(\exp(a/m))_{m \in \N^+}$ must be chosen to be one of the continuum-many sequences $(\sqrt[m]2)_{m\in\N^+}$ from $\Q^\alg$, and even allowing the translation $a \mapsto a + \mu$ for a kernel element $\mu$ only allows countably many of the sequences to be realised in a kernel-preserving extension of $F$. We can avoid this if the the kernel is sufficiently saturated in the following sense.

As an abelian group, a full kernel is always a model of $\Th(\Z;+)$. Such groups $M$ are isomorphic to a direct sum $M_r \oplus M_d$ where $M_d \subseteq M$ is the subgroup of divisible elements, and $M_r = M / M_d$ is the \emph{reduced part} of $M$. This reduced part is always an elementary submodel of the profinite completion $\hat\Z$ of $\Z$, see \cite[Chapter~15]{Rothmaler}.

\begin{definition}
A partial E-field has \emph{very full kernel} if the reduced part of its kernel is all of $\hat\Z$.
\end{definition}

\begin{theorem}[Uniqueness of free extensions]
\thlabel{uniqueness of free extensions}
Let $F$ be a partial E-field with very full kernel which is generated as a field by $D(F) \cup \exp(D(F))$. Then $F^\ELA$ is unique up to isomorphism as an extension of $F$.
\end{theorem}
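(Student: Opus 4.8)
The plan is to prove uniqueness by a transfinite back-and-forth argument. Suppose $M_1$ and $M_2$ are both free ELA-extensions of $F$, each witnessed by a continuous chain of one-step free extensions as in \thref{free ELA construction}; both have the same very full kernel as $F$, since the extensions are kernel-preserving. I would build an isomorphism $M_1 \iso M_2$ fixing $F$ as the union of an increasing chain of isomorphisms $\varphi_\gamma \colon A_\gamma \to B_\gamma$, where $A_\gamma \subseteq M_1$ and $B_\gamma \subseteq M_2$ are partial E-subfields containing $F$, each with very full kernel and each obtained from $F$ by finitely many one-step free extensions. Enumerating $M_1$ and $M_2$ and alternating, at limit stages one takes unions (partial E-fields are closed under unions of chains), so the only real work is the successor step. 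Since every element of $M_1$ lies in a partial E-subfield obtained from $A_\gamma$ by finitely many one-step free extensions, it suffices to prove the following.

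\emph{Key Lemma.} If $\varphi \colon A \to B$ is an isomorphism over $F$ as above and $A \into A\gen{a}$ is a one-step free extension inside $M_1$, then $\varphi$ extends to an isomorphism $A\gen{a} \to B\gen{b}$ onto a one-step free extension inside $M_2$. I would split into the two cases of the definition. In the case where $a$ is algebraic over $A$ and $e^a$ is transcendental, extend the field isomorphism to $A^\alg \to B^\alg$ (which lies inside the algebraically closed $M_2$), set $b \leteq \varphi(a)$, and use freeness of $M_2$ to check that $\exp_{M_2}(b)$ is transcendental over $B$, so that one may set $\varphi(e^a) = e^b$. In the case where $a$ is transcendental and $e^a = c$ is algebraic over $A$, use that $M_2$ is an ELA-field with the same very full kernel, built freely, to produce a logarithm $b \in M_2$ of $\varphi(c)$ that is transcendental over $B$. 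In either case, after fixing $\varphi(a)=b$ and $\varphi(e^a)=e^b$, the only remaining freedom in extending $\varphi$ over $D(A\gen{a}) = D(A)\oplus \Q a$ is the coherent choice of the values $\varphi(\exp(a/m))$, each an $m$-th root of $e^b$; the two prescribed systems $(\varphi(\exp(a/m)))_m$ and $(\exp(b/m))_m$ differ by an element of the Tate module $T = \varprojlim_m \mu_m \iso \hat\Z$, where $\mu_m$ is the group of $m$-th roots of unity.

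The crux, and the only place the very full kernel hypothesis is essential, is to absorb this discrepancy. Replacing $b$ by $b+\mu$ for $\mu \in \ker_{M_2}$ leaves $e^b$ unchanged but multiplies each $\exp(b/m)$ by $\exp(\mu/m)$, shifting the root system by $(\exp(\mu/m))_m \in T$. So it suffices to show that
\[
\ker_{M_2} \longrightarrow T, \qquad \mu \longmapsto (\exp(\mu/m))_m
\]
is surjective. Identifying $T$ with $\varprojlim_m \ker/m\ker$ via $\exp$ (full kernel makes $\exp$ induce $\tfrac1m\ker/\ker \iso \mu_m$), this is the canonical completion map $\ker \to \varprojlim_m \ker/m\ker$. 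The divisible part of the kernel dies in the completion, so the map factors through the reduced part, which for a very full kernel is all of $\hat\Z$; as $\hat\Z = \varprojlim_m \hat\Z/m\hat\Z$ is already complete, the map is surjective. (For a merely standard kernel $\tau\Z$ the reduced part is $\Z \subsetneq \hat\Z$, the map has dense but proper image, and this is exactly the source of the non-uniqueness in the example preceding the statement.)

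Choosing such a $\mu$ and replacing $b$ by $b+\mu$ makes the two root systems agree, completing the extension of $\varphi$, and the back-and-forth then yields the isomorphism $M_1 \iso M_2$ over $F$. The main obstacle is precisely the surjectivity onto the Tate module above, together with the bookkeeping required to guarantee, from freeness of $M_2$ and the hypothesis that $F$ is generated by $D(F)\cup\exp(D(F))$, that the transcendental exponentials and logarithms needed in the two cases of the Key Lemma actually exist in $M_2$.
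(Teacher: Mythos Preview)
The paper does not give a self-contained proof; it simply cites \cite[Proposition~3.13]{kirby_exponentially_2014}. Your back-and-forth argument is essentially the argument that appears there, and your identification of the crux is exactly right: very full kernel is needed precisely so that the map $\ker \to \varprojlim_m \mu_m \cong \hat\Z$ is surjective, allowing any prescribed coherent system of roots of $\varphi(c)$ to be realised as $(\exp(b/m))_m$ for a suitable kernel-translate of $b$. Your diagnosis of the standard-kernel example preceding the theorem is also correct.

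One minor imprecision: the root-system discrepancy arises only in Case~2. In Case~1, $e^a$ is transcendental over $A$, so its $m$-th roots $\exp(a/m)$ are transcendental as well and hence do not lie in $A^{\alg}$; they are therefore not yet committed by the extension of $\varphi$ to $A^{\alg}$, and you may simply set $\varphi(\exp(a/m)) \leteq \exp_{M_2}(b/m)$ with no adjustment needed. It is only when $e^a = c$ is algebraic that the roots $\exp(a/m) \in A^{\alg}$ have already been assigned images under $\varphi$, forcing the Tate-module correction. The remaining bookkeeping you flag --- chiefly that each $M_i$ remains a free ELA-extension of the intermediate $A_\gamma$, $B_\gamma$, so that the required transcendental exponentials and logarithms are available at every step --- is genuine but routine.
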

\begin{proof}
This is \cite[Proposition~3.13]{kirby_exponentially_2014}.
\end{proof}
\begin{remark}
\thlabel{rem:uniqueness-countable-case}
It follows from \cite[Theorem~2.18]{kirby_finitely_2013} that the conclusion of \thref{uniqueness of free extensions} also holds when $F$ has full kernel and one of the following holds:
\begin{enumerate}
\item $D(F)$ is finite dimensional, or
\item $D(F)$ is finite dimensional over some countable ELA-subfield (or even just LA-subfield).
\end{enumerate}
 We discuss these cases further in section \ref{subsec:more-general-kernels}.
\end{remark}
\begin{definition}
We write $\ExpFkp$ for the category of exponential fields with full kernel, and kernel-preserving embeddings, and $\ELAFkp$ for the full subcategory of ELA-fields with kernel-preserving embeddings.
\end{definition}
\begin{proposition}\thlabel{prop:amalgam-base}
The amalgamation bases for $\ExpFkp$ are precisely the $\ELA$-fields.
\end{proposition}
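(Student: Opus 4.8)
The plan is to prove the two inclusions separately. Every ELA-field has full kernel and so is an object of $\ExpFkp$, and I would first show that such fields are amalgamation bases. Given an ELA-field $A$ and kernel-preserving embeddings $B \hookleftarrow A \into C$ in $\ExpFkp$, I would construct a common kernel-preserving extension as a free amalgam. Since $A$ is algebraically closed it is relatively algebraically closed in both $B$ and $C$, so $B \otimes_A C$ is an integral domain whose field of fractions contains $B$ and $C$ linearly disjoint over $A$; in particular $B \cap C = A$. On the additive subgroup $B + C$ I define a partial exponential by $\exp(x+y) = \exp_B(x)\exp_C(y)$ for $x \in B$, $y \in C$; this is well defined precisely because $\exp_B$ and $\exp_C$ agree on $A = B \cap C$, and it is a homomorphism. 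The one place the ELA-hypothesis is used is the verification that this partial E-field still has kernel exactly $\ker_A$: if $\exp_B(x)\exp_C(y) = 1$ then $\exp_B(x) = \exp_C(y)^{-1} \in B \cap C = A^\times$, and because $A$ has logarithms we may write $\exp_B(x) = \exp_A(a_0)$ with $a_0 \in A$; kernel-preservation of $A \into B$ then gives $x - a_0 \in \ker_A$, so $x, y \in A$ and $x + y \in \ker_A$. Finally I extend this partial E-field to a free ELA-extension $D$ as in \thref{free ELA construction}, which does not enlarge the kernel, so $B \into D \hookleftarrow C$ are kernel-preserving and complete the square.

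For the converse I argue the contrapositive, exhibiting for a non-ELA object $A$ two kernel-preserving extensions with no common kernel-preserving extension. If $A$ is not algebraically closed, I take an irreducible $p \in A[X]$ and pass to its splitting field, forming two extensions that adjoin logarithms to the roots with two systems of exponential values constrained only by the symmetric-function relations that land in $A$. The values can be chosen so that no identification of the roots by a field embedding over $A$ respects both exponential structures; since any field amalgam factors through such an identification, already no field amalgam, hence no E-field amalgam, exists.

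If instead $A$ is algebraically closed but some $b \in A^\times$ has no logarithm, I use the non-uniqueness of free ELA-extensions discussed after \thref{free ELA construction}: a kernel-preserving ELA-extension adjoining a logarithm $a$ of $b$ is determined by a compatible system $(\exp(a/m))_m$ of $m$-th roots of $b$. In a kernel-preserving amalgam of two such extensions, with logarithms $a_1, a_2$, kernel-preservation forces $a_1 - a_2 \in \ker_A$, so the two systems must differ by the root-of-unity system $(\exp(\mu/m))_m$ coming from a single $\mu \in \ker_A$. Whenever these $\ker_A$-realisable systems do not exhaust $\varprojlim \mu_m$ I can choose the two systems in distinct cosets, producing non-isomorphic, non-amalgamable extensions.

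The main obstacle is this final logarithm case. Unlike in the category $\ExpF$, where logarithms may be adjoined freely because new kernel elements are permitted (so EA-fields already suffice there), here one must show that a missing logarithm genuinely obstructs amalgamation; the rigidity that makes this work — that logarithms of a fixed element are unique modulo $\ker_A$ — is exactly the phenomenon behind the non-uniqueness example following \thref{free ELA construction}. The delicate point is to produce a provably non-amalgamable pair for an arbitrary full kernel, which requires understanding when the $\ker_A$-realisable coherent systems of roots of unity fail to fill $\varprojlim \mu_m$; this is precisely where the saturation of the kernel (as in \thref{uniqueness of free extensions}) must be taken into account. By contrast, the free-amalgam construction of the first part is routine once the kernel computation is in place, and the not-algebraically-closed case runs parallel to the EA-field argument of \cite[Theorem~4.3]{haykazyan_existentially_2021}.
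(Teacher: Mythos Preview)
Your forward direction (ELA-fields are amalgamation bases) is essentially the paper's argument and is fine.

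The gap is in your treatment of the missing-logarithm case. Your strategy is to produce two kernel-preserving extensions of $A$ by choosing two coherent systems $(\exp(a_1/m))_m$ and $(\exp(a_2/m))_m$ of $m$-th roots of $b$ lying in distinct cosets of the subgroup of $\varprojlim \mu_m$ realised by $\ker_A$. You correctly observe that amalgamation would force $a_1 - a_2 \in \ker_A$ and hence the two systems into the same coset. But you also correctly note that this only obstructs amalgamation \emph{when the realisable systems do not fill $\varprojlim \mu_m$}, and you do not resolve what happens otherwise. If $A$ has very full kernel (reduced part $\hat\Z$), then every coherent system of roots of unity is $\ker_A$-realisable, your two extensions \emph{do} amalgamate, and the argument collapses. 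Yet such an $A$ can still fail to be ELA, so the proposition says it is not an amalgamation base; your proof does not establish this.

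The paper avoids this difficulty with a construction that is insensitive to kernel saturation. Given $b$ with no logarithm, it builds a partial E-field extension $F_2$ of $A$ with a new element $a$ satisfying both $\exp(a)=b$ and $\exp(a^2)=a$; this is kernel-preserving. It then compares $F_2^{\ELA}$ with the free extension $A^{\ELA}$. In any kernel-preserving amalgam the image of $a$ must coincide with some logarithm $a'$ of $b$ in $A^{\ELA}$, but freeness of $A^{\ELA}$ forces $a'$ and $\exp((a')^2)$ to be algebraically independent over $A$, contradicting $\exp((a')^2)=a'$. The obstruction is an algebraic relation rather than a congruence among root systems, so it works for every full kernel. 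Your not-algebraically-closed case is sketched too vaguely to assess, but the paper's version there follows the same pattern: one extension makes $\exp(a)$ algebraic (indeed $\exp(a)=a$), the other makes it transcendental, and this is what blocks amalgamation.
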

\begin{proof}
Let $F$ be an ELA-field and let $f_1:F \to F_1$ and $f_2: F \to F_2$ be two kernel-preserving extensions. We can amalgamate $F_1$ and $F_2$ freely as fields over $F$ and then $\exp_{F_1} \cup \exp_{F_2}$ extends uniquely by additivity to the $\Q$-linear space $F_1 + F_2$. It is easy to check that this does not introduce any new kernel elements. It is then easy to extend this partial E-field to an ELA-field without adding new kernel elements, for example freely as in \thref{free ELA construction}. So ELA-fields are amalgamation bases in $\ExpFkp$, and indeed $\ELAFkp$ has amalgamation.

Conversely, suppose that $F$ is an exponential field with full kernel which is not an ELA-field. First suppose that $F$ is not algebraically closed, and take $a \in F^{\alg} \setminus F$. Using \thref{free ELA construction}, we can form the free ELA-extension $F^{\ELA}$ of $F$ in which the exponentials of $a$ and its conjugates are all transcendental over $F$, and the kernel does not extend. 

We can also form a partial E-field extension $F_1$ of $F$ by choosing a coherent system of roots $(a_m)_{m \in \N^+}$ of $a$ in $F^{\alg}$, that is, we have $a_1=a$ and for all $m,r \in \N^+$ we have $a_{mr}^r = a_m$, and then defining $\exp(la/m + b) = a_m^l \cdot \exp_F(b)$ for all $l \in \Z$, all $m \in \N^+$, and all $b \in F$.

This is a kernel-preserving extension, because if $\exp(la/m + b) = 1$ with $l \neq 0$ then $a_m^l = \exp_F(-b)$, so there is a root of unity $\xi$ such that $\exp_F(-mb/l) = a\xi $. Since $F$ has full kernel, $\xi \in F$, and this contradicts the fact that $a \notin F$.

Now we form the free ELA-extension $F_1^{\ELA}$ of $F_1$. We have two kernel-preserving extensions $F^{\ELA}$ and $F_1^{\ELA}$ of $F$. In $F_1^\ELA$ we have $\exp(a) = a$ so $\exp(a) \in F^\alg$, but if $a'$ is any conjugate of $a$ in $F^\ELA$ then $\exp(a')$ is transcendental over $F$. Hence these extensions cannot be amalgamated over $F$ (even if we allow the kernel to extend).

Now suppose that $F$ is an EA-field, but the exponential map of $F$ is not surjective, say $b\in F^{\times}$ has no logarithm in $F$.

Let $F_1$ be a partial E-field extension of $F$ generated by an element $a$ such that $\exp(a) = b$. Then $a \notin F$ and so $a$ is transcendental over $F$. Then $a^2$ is not in the domain of $\exp_{F_1}$. The image of $\exp_{F_1}$ is the multiplicative span of the image of $\exp_F$ and $b$, so in particular it does not contain $a$. Therefore we can define a further partial E-field extension $F_2$ of $F_1$ with domain spanned by $F$, $a$, and $a^2$, such that $\exp_{F_2}(a^2) = a$. Furthermore, $F_2$ is a kernel-preserving extension of $F$.

Now consider the two extensions $F \into F^\ELA$ and $F \into F_2^\ELA$. If they amalgamate over $F$ without extending the kernel, say into an exponential field $F'$, then the element $a \in F_2$ must map to one of the logarithms of $b$ in $F'$, say $a'$. But this must also come from one of the logarithms of $b$ in $F^\ELA$, which implies that $a'$ and $\exp((a')^2)$ are algebraically independent over $F$, a contradiction.

Hence ELA-fields are the only amalgamation bases in $\ExpFkp$.
\end{proof}
From the proof of \thref{prop:amalgam-base} we also get directly:
\begin{corollary}
\thlabel{cor:independent-ela-amalgamation}
Any span $F_1 \leftarrow F \to F_2$ in $\ExpFkp$ with $F$ an ELA-field can be amalgamated such that $F_1 \ind_F^{\td} F_2$ in the resulting amalgam.\qed
\end{corollary}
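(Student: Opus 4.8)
The plan is to read the required independence directly off the amalgamation constructed in the proof of \thref{prop:amalgam-base}. There, starting from an ELA-field $F$ and two kernel-preserving extensions $f_1 \colon F \to F_1$ and $f_2 \colon F \to F_2$, one amalgamates $F_1$ and $F_2$ \emph{freely as fields over $F$}. The only thing to make explicit is that ``freely as fields'' means forming the free compositum, i.e. the field of fractions of the tensor product $F_1 \otimes_F F_2$. By construction, in this field $F_1$ and $F_2$ are algebraically independent over $F$, which is exactly the statement $F_1 \ind_F^{\td} F_2$. The exponential map then extends by additivity to $F_1 + F_2$ and finally to an ELA-field $F'$ exactly as in that proof, adding no new kernel elements, so $F' \in \ELAFkp \subseteq \ExpFkp$ is a legitimate amalgam of the given span.

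It remains only to check that passing from the free compositum up to $F'$ does not destroy this transcendence-degree independence. This is immediate: $\ind^{\td}$ is an independence relation on the category of fields with field embeddings satisfying \textsc{Invariance} (as recorded in the examples following \thref{def:independence-relations}), and algebraic independence of $F_1$ and $F_2$ over $F$ is absolute. Once $F_1$ and $F_2$ are algebraically independent over $F$ inside the compositum, they remain so inside any field extension of it; hence $F_1 \ind_F^{\td} F_2$ holds in $F'$, as required.

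There is essentially no obstacle here beyond fixing this interpretation, since the content is entirely contained in the earlier proof — this is why the statement is a corollary. The one point worth stressing, were one to write out the details, is that the free ELA-extension of \thref{free ELA construction} adjoins new exponentials and logarithms only relative to the whole amalgam, and never forces an algebraic relation between $F_1$ and $F_2$ over $F$ that was not already present; this is precisely what preservation of $\ind^{\td}$ under field extension guarantees.
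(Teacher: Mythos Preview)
Your proposal is correct and matches the paper's approach exactly: the paper states the corollary immediately after \thref{prop:amalgam-base} with the remark ``From the proof of \thref{prop:amalgam-base} we also get directly'' and a \qedsymbol, and you have simply unpacked what that means. The only content is that the free field amalgam gives $F_1 \ind_F^{\td} F_2$ by construction, and this is preserved under the subsequent field extensions, which is precisely what you wrote.
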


\begin{definition}
Let $F$ be an ELA-field, and $A \subseteq F$ a subset. We write $\gen{A}^\ELA_F$ for the intersection of all ELA-subfields $B$ of $F$ containing $A \cup \ker_F$.
\end{definition}
Note that we force $\gen{A}_F^\ELA$ to contain $\ker_F$, so it is not just the intersection of all ELA-subfields. Whenever $F_1 \subseteq F_2$ is a kernel-preserving inclusion of ELA-fields then for any $a \in F_2$ with $\exp(a) \in F_1$ we have $a \in F_1$. It is then easy to see that $\gen{A}^\ELA_F$ is an ELA-subfield of $F$, and hence the category $\ELAFkp$ has intersections. (The category $\ELAF$ actually does not have intersections.)

Furthermore, for any $A \subseteq F_1 \subseteq F_2$ with $\ker_{F_1} = \ker_{F_2}$ we have $\gen{A}^\ELA_{F_1} = \gen{A}^\ELA_{F_2}$, so provided we have fixed the kernel we will usually drop the subscript and write just $\gen{A}^\ELA$.

The JEP-classes of $\ELAFkp$ are given by the isomorphism types of $\gen{0}^\ELA_F$. We call this the \emph{kernel type} of $F$. We write \ELAFKkp\ for the full subcategory of \ELAFkp\ consisting of the ELA-fields with kernel type $K$.

\begin{prop}
Each category $\ELAFKkp$ is an AEC with amalgamation, joint embedding, and intersections.
\end{prop}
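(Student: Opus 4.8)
The plan is to deduce most of the statement from the properties of $\ELAFkp$ already established above, using the single observation that a kernel-preserving embedding $f\colon F_1 \into F_2$ forces $\ker_{F_2}$ into the image of $F_1$, so that $\ker_{F_1} \cong \ker_{F_2}$ and hence $\gen{0}^\ELA_{F_1} \iso \gen{0}^\ELA_{F_2}$. Thus \emph{every arrow of $\ELAFkp$ preserves the kernel type}. Consequently the restriction to a fixed kernel type $K$ is respected by all the constructions we use, and $\ELAFKkp$ inherits the relevant structure of $\ELAFkp$ while additionally gaining joint embedding.

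First I would verify the AEC axioms. Closure under isomorphism is immediate, since an isomorphism preserves both ``being an ELA-field'' and the kernel type. For coherence, given $A \subseteq B \subseteq C$ in $\ELAFKkp$ with $A \into C$ and $B \into C$ kernel-preserving, we get $\ker_A = \ker_C = \ker_B$, so the inclusion $A \into B$ is kernel-preserving between ELA-fields of kernel type $K$ and hence an arrow of $\ELAFKkp$. For closure under unions of chains, the union $A = \bigcup_{i<\lambda} A_i$ of a chain of ELA-fields is again an ELA-field (algebraic closure and the existence of logarithms are each witnessed at some finite stage), and since every inclusion is kernel-preserving no new kernel element can appear: if $\exp_A(x) = 1$ then $x \in A_i$ for some $i$, whence $x \in \ker_{A_i} = \ker_{A_0}$. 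So $A$ has kernel type $K$, and all inclusions $A_i \into A$, as well as the inclusion into any common bound, are kernel-preserving.

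The one genuinely new point is the Downwards L\"owenheim--Skolem property, and this is where the kernel-preserving restriction bites: a subobject must carry the whole kernel. Given $F \in \ELAFKkp$ and $S \subseteq F$, I would take $B = \gen{S}^\ELA_F$, which by definition contains $\ker_F$ and is an ELA-subfield, hence a kernel-preserving subobject of kernel type $K$. Its cardinality is bounded by $|S| + |\ker_F| + \aleph_0$, so setting the LS-cardinal to $\kappa = |K|$ (which is infinite, as a full kernel is an infinite group, and satisfies $|\ker_F| \leq |K| \leq |\ker_F| + \aleph_0$) gives $|B| \leq |S| + \kappa$ as required.

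Finally, amalgamation, JEP and intersections follow quickly. For amalgamation, a span $F_1 \leftarrow F \to F_2$ in $\ELAFKkp$ can be amalgamated in $\ELAFkp$ by \thref{prop:amalgam-base} (every ELA-field is an amalgamation base there), and the resulting arrows $F_1 \to D \leftarrow F_2$ being kernel-preserving force $D$ to have kernel type $K$, so $D \in \ELAFKkp$. For JEP, every object $F$ contains the kernel-preserving subobject $\gen{0}^\ELA_F \iso K$, so $\ELAFKkp$ forms a single JEP-class of $\ELAFkp$; JEP then follows by amalgamating any two objects over a common copy of $K$. For intersections, $\ELAFkp$ has intersections (computed via $\gen{\cdot}^\ELA$), and any subobject of $F \in \ELAFKkp$ contains $\ker_F$ and has kernel type $K$; an intersection of such subobjects again contains $\ker_F$ and is an ELA-subfield, so it has kernel type $K$ and lies in $\ELAFKkp$. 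The main obstacle is thus purely the bookkeeping around the L\"owenheim--Skolem cardinal, which must be taken at least as large as $|K|$ rather than $\aleph_0$.
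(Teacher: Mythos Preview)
Your proposal is correct and follows essentially the same approach as the paper: deduce the AEC axioms from those of $\ELAF$/$\ELAFkp$, identify the L\"owenheim--Skolem cardinal as $|K|$ via $\gen{S}^\ELA_F$, and read off amalgamation, JEP, and intersections from \thref{prop:amalgam-base} and the discussion preceding the proposition. Your write-up is simply more explicit than the paper's terse version, which cites \thref{prop:basic-categories-aecs} for the AEC axioms and the immediately preceding paragraph for intersections rather than re-verifying them.
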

\begin{proof}
That $\ELAFKkp$ is an AEC follows from the fact that $\ELAF$ is an AEC (\thref{prop:basic-categories-aecs}), where for the downward L\"owenheim-Skolem property we use the same property for $\ELAF$ where we make sure that $K$ is included in the smaller model. (Thus the LS-cardinal will be $|K|$, and since this is unbounded, it prevents $\ELAFkp$ being an AEC.) The amalgamation property follows from \thref{prop:amalgam-base}, JEP is then immediate, and we have just observed that it is closed under intersections.
\end{proof}

\subsection{Strong embeddings}

In any analytic exponential field, in particular $\Rexp$ and $\Cexp$, or more generally any exponential field where the exponential algebraic closure pregeometry $\ecl$ is non-trivial, the Ax-Schanuel theorem is relevant. It gives non-negativity of a certain predimension function. The embeddings which preserve this predimension function, and in particular preserve its non-negativity, are the \emph{strong embeddings}. Zilber's exponential field $\Bexp$ is constructed by amalgamation of these strong embeddings.

\begin{definition}
\thlabel{def:relative-predimension-over-the-kernel}
\begin{enumerate}
\item 
Let $F$ be a partial E-field. We define the \emph{relative predimension over the kernel} as follows. For a finite tuple $a \in D(F)$ and $B \subseteq D(F)$ we define:
\[
\Delta_F(a/B) \leteq \td(a, \exp(a) / B, \exp(B), \ker_F) - \ldim_{\Q}(a / B, \ker_F).
\]
We omit $B$ if $B$ is empty, so $\Delta_F(a) = \Delta_F(a / \emptyset)$. We may omit the subscript $F$ if the field is clear from the context.

\item An embedding $A \into F$ of partial E-fields is \emph{strong} if it is kernel-preserving and for all finite tuples $b \in D(F)$ we have $\Delta_F(b/A) \geq 0$. We write $A \strong F$ for a strong embedding.

\item If $F$ is a partial E-field and $A$ is a subset of $D(F)$, we say that $A$ is \emph{strong} in $F$ and write $A \strong F$ if for all finite tuples $b$ from $F$ we have $\Delta(b/A) \geq 0$. This agrees with the previous definition in the sense that $A$ is strong in $F$ if and only if the embedding $\gen{A \cup \ker_F}_F \hookrightarrow F$ is strong.

\item It is easy to check that isomorphisms are strong and the composition of strong embeddings is strong, so ELA-fields and strong embeddings form a category which we denote by $\ELAFstrong$.
\end{enumerate}
\end{definition}

\begin{remarks}
\begin{enumerate}
\item When the kernel $K$ is the standard kernel, the predimension function $\Delta$ is for all purposes equivalent to the more commonly used predimension function 
\[\delta(a/B) \leteq \td(a, \exp(a) / B, \exp(B)) - \ldim_{\Q}(a / B).\]
Of course if $B$ contains the kernel then $\delta$ and $\Delta$ agree anyway.

\item The paper \cite{kirby_exponentially_2014} contains an analysis of embeddings for which the predimension inequality holds, but which do not necessarily preserve the kernel, there called \emph{semi-strong} embeddings. The category $\ECF$ of \emph{exponentially closed fields}, conjecturally the category of models of the complete first-order theory of $\Bexp$ and their elementary embeddings, is a further refinement of those ideas, developed in the same paper. Such a theory would interpret the theory of arithmetic and thus has SOP$_1$, so no NSOP$_1$-like independence relation can exist.
\end{enumerate}
\end{remarks}

An exponential field may have no proper strong subsets. For example, this is true for exponential fields which are existentially closed for all embeddings. However, in exponential fields with some proper strong subsets there are many of them and they play an important role as we now explain.

\begin{definition}
\thlabel{def:hull}
Let $F$ be a partial E-field and $A \subseteq D(F)$. We define the \emph{hull} of $A$ in $F$ to be
\[
\hull{A}_F = \bigcap \{B \subseteq D(F) : B \text{ is a $\Q$-linear subspace, } A \cup \ker_F \subseteq B, \text{ and } B \strong F \}.
\]
Note that if $F_1 \strong F_2$ and $A \subseteq D(F_1)$ then $\hull{A}_{F_1} = \hull{A}_{F_2}$, so we omit the subscript $F$ from the notation unless it is needed.
\end{definition}

\begin{lemma}\thlabel{hull finite char}
Let $F$ be a partial E-field and $A \subseteq D(F)$. Then
\begin{enumerate}
\item $\hull{A}$ is well-defined and is strong in $F$.
\item The hull operator has finite character, that is, $\hull{A} = \bigcup_{A_0 \subseteq_{\text{finite}} A} \hull{A_0}$.
\item Suppose that $C \strong F$ and $a$ is a finite tuple from $D(F)$. Then $\ldim_{\Q}(\hull{C a} / C)$ is finite.
\end{enumerate}

\end{lemma}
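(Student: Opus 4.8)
The plan is to handle all three parts with the standard predimension toolkit for Hrushovski-style self-sufficient closures: the chain rule $\Delta(xy/C) = \Delta(x/C) + \Delta(y/xC)$ (immediate from additivity of $\td$ and of $\ldim_\Q$ over a common base containing $\ker_F$), the submodularity of $\Delta$ (inherited from submodularity of transcendence degree together with modularity of $\Q$-linear dimension, via the identity $\delta(V+W)+\delta(V\cap W)\leq \delta(V)+\delta(W)$ for finite-dimensional subspaces), and the observation that if $C \strong F$ then $\Delta(b/C)\geq 0$ for every finite tuple $b$, so every relative predimension over a strong base is a nonnegative integer. For part (1) I would first note that $D(F)$ is itself strong in $F$: for a finite tuple $b$ from $D(F)$ both $\td(b,e^b/D(F),\exp(D(F)),\ker_F)$ and $\ldim_\Q(b/D(F),\ker_F)$ vanish, so $\Delta(b/D(F))=0$. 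Since $D(F)$ is a $\Q$-subspace containing $A\cup\ker_F$, the family intersected in the definition of $\hull{A}$ is nonempty. It then remains to show this family is closed under intersections. Submodularity of $\Delta$ gives, as usual, that the intersection of two strong subspaces is strong; this handles finite intersections by induction. For an arbitrary intersection $B=\bigcap_i B_i$ and a finite tuple $b$, the finitely many linear and algebraic dependencies of $(b,e^b)$ over $B$ already occur over a single finite sub-intersection $B'$, so $\Delta(b/B')=\Delta(b/B)$; as $B'$ is strong this gives $\Delta(b/B)\geq 0$. Hence $\hull{A}$ is a well-defined strong subspace.

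For part (2), monotonicity of the hull gives the easy inclusion: any strong subspace containing $A\cup\ker_F$ contains $A_0\cup\ker_F$, so $\hull{A_0}\subseteq\hull{A}$ for finite $A_0\subseteq A$, whence $\bigcup_{A_0}\hull{A_0}\subseteq\hull{A}$. Conversely, $H:=\bigcup_{A_0}\hull{A_0}$ is a directed union of subspaces (directed since $\hull{A_0}+\hull{A_1}\subseteq\hull{A_0\cup A_1}$), hence a subspace, and it contains $A\cup\ker_F$. It is strong because strongness has finite character: for any finite tuple $b$ from $D(F)$ the linear and algebraic dependencies of $(b,e^b)$ over $H$ involve only finitely many elements of $H$, all lying in a single $\hull{A_0}$ by directedness, so $\Delta(b/H)=\Delta(b/\hull{A_0})\geq 0$. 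Therefore $\hull{A}\subseteq H$, giving equality.

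For part (3) I may assume $\ker_F\subseteq C$ (the natural normalization, since $\Delta$ always carries $\ker_F$ in its base and every hull contains $\ker_F$). I would bound $\hull{Ca}$ by running the self-sufficient closure process. Put $W_0=C+\linspan(a)$, so that $\Delta(W_0/C)=\Delta(a/C)=:n_0$, a nonnegative integer because $C$ is strong. While $W_i$ is not strong, choose a finite tuple $b$ with $\Delta(b/W_i)<0$ and set $W_{i+1}=W_i+\linspan(b)$; by the chain rule $\Delta(W_{i+1}/C)=\Delta(W_i/C)+\Delta(b/W_i)\leq\Delta(W_i/C)-1$. Since $C$ is strong, $\Delta(W_i/C)\geq 0$ throughout, so this strictly decreasing sequence of nonnegative integers terminates after at most $n_0$ steps, producing a strong subspace $W_N\supseteq Ca$ with $\ldim_\Q(W_N/C)<\infty$ (each step adjoins finitely many dimensions). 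As $W_N$ is a strong subspace containing $Ca$ we get $\hull{Ca}\subseteq W_N$, hence $\ldim_\Q(\hull{Ca}/C)<\infty$.

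The main obstacle is the submodularity step in part (1): making precise the reduction from the possibly infinite-dimensional bases $B_1,B_2$ (and the possibly infinite-dimensional kernel) to finite-dimensional subspaces carrying the relevant dependencies of $(b,e^b)$, so that the finite-dimensional submodular inequality can be applied to conclude that $B_1\cap B_2$ is strong. Once that is in hand, the arbitrary-intersection reduction and parts (2) and (3) are routine applications of the chain rule, finite character, and the nonnegativity of predimensions over strong bases.
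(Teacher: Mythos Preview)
Your overall structure is sound and parts (2) and (3) are correct. For (3) the paper takes a one-shot minimization---choose $b$ minimizing $\Delta(ab/C)$, and among those minimizing $\ldim_\Q(ab/C)$, then observe $\linspan(Cab)$ is strong and in fact equals $\hull{Ca}$---whereas you iterate, but both work and yours is the standard Hrushovski-style argument. (The paper's version gives the slightly sharper conclusion that one has actually \emph{constructed} $\hull{Ca}$, not merely bounded it.)

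The gap is in your reduction of the arbitrary intersection in (1) to finite sub-intersections. You write that ``the finitely many linear and algebraic dependencies of $(b,e^b)$ over $B$ already occur over a single finite sub-intersection $B'$, so $\Delta(b/B')=\Delta(b/B)$''. But $B\subseteq B'$, so dependencies over $B$ trivially persist over $B'$; the issue is that there may be \emph{additional} dependencies over the larger set $B'$, so you only get $\td(b,e^b/K_{B'})\leq\td(b,e^b/K_B)$ and $\ldim(b/B')\leq\ldim(b/B)$, from which neither inequality between $\Delta(b/B')$ and $\Delta(b/B)$ follows. Finite character of $\Delta$ goes the right way for directed \emph{unions} (which is why your argument in (2) is fine) but not for directed \emph{intersections}.

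The argument is salvageable, and in fact you only need the inequality $\Delta(b/B)\geq\Delta(b/B')$ for a well-chosen $B'$. Using closure of the family under finite intersections (from the pairwise case), the integer $\ldim(b/B')$ attains its maximum $m$ at some $B'_0$, and a short argument shows $\linspan(b)\cap B'_0=\linspan(b)\cap B$, so $\ldim(b/B)=m$. Likewise $\td(b,e^b/K_{B'})$ attains its supremum $t^*$ at some $B'_1$. For $B'\subseteq B'_0\cap B'_1$ one has $\Delta(b/B')=t^*-m\geq 0$ since $B'$ is strong, and then $\Delta(b/B)=\td(b,e^b/K_B)-m\geq t^*-m\geq 0$. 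The paper itself does not give this argument; it simply cites \cite[Lemma~4.5]{bays_pseudo-exponential_2018} for part (1).
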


\begin{proof}
\begin{enumerate}
\item We always have $D(F) \strong F$, so the intersection is non-empty and so well-defined. The fact that it is strong in $F$ is \cite[Lemma~4.5]{bays_pseudo-exponential_2018}.
\item This slightly improves the statement of \cite[Lemma~4.7]{bays_pseudo-exponential_2018}, but the proof is identical: from the definition of the hull, it is immediate that the union $U \leteq \bigcup_{A_0 \subseteq_{\text{finite}} A} \hull{A_0}$ satisfies $A \cup \ker_F \subseteq U \subseteq \hull{A}$. But from finite character of $\delta$ and the fact that the union is directed, we get $U \strong F$, so the result follows.
\item 
Let $X = \{ \Delta(ab / C) : b \in D(F) \text{ (a finite tuple)}\}$. Since $C \strong F$, as $b$ ranges over finite tuples from $D(F)$, the value of $\Delta(ab / C)$ is always in $\N$, so we can choose $b$ such that $\Delta(ab/C)$ is minimal, and for that value of $\Delta$ we can choose $b$ such that $\ldim_\Q(ab/C)$ is minimal. Then for any $d \in D(F)$ we have 
\[
\Delta(d/Cab) = \Delta(abd/C) - \Delta(ab/C) \geq 0
\]
by minimality. Hence $Cab \strong F$, and by the minimality of the linear dimension its span is $\hull{Ca}$.
\end{enumerate}
\end{proof}
The proofs in \cite{bays_pseudo-exponential_2018} work with the graph $\Gamma$ of the exponential map rather than the domain $D(F)$, and in fact work in greater generality, but the difference is not relevant for this paper. Older proofs of similar statements in \cite{kirby_finitely_2013} work under the assumption that the kernel is strongly embedded, or something similar, but this assumption is not needed.

The free extensions of Constructions~\ref{free EA construction} and~\ref{free ELA construction} are always strong. To see this, it is immediate that the one-step free extensions are strong, and then one can iterate. Furthermore, intermediate steps on the free constructions are also strong.

On the other hand, finitely generated strong extensions are very close to being free extensions, and in particular they are classifiable \cite{kirby_finitely_2013, kirby_exponentially_2014}, which gives rise to a form of stability in the type-counting sense. So stability of an independence relation as we show here is to be expected, albeit not automatic as the setting is not first-order and indeed we only prove it in the case of very full kernel.

\begin{theorem}\thlabel{strong extensions are nearly free}
Suppose $F$ is an ELA-field, and $A \strong F$ is a strong partial E-subfield of $F$. Then the ELA-closure $\gen{A}^{\ELA}_F$ of $A$ inside $F$ is also strong in $F$, and it is isomorphic to a free ELA-field extension $A^\ELA$.

Furthermore, if the hypotheses of \thref{uniqueness of free extensions} hold, then the isomorphism type of $\gen{A}^{\ELA}_F$ over $A$ does not depend on the choice of strong ELA-extension $F$.
\end{theorem}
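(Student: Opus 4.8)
The plan is to realise $\gen{A}^{\ELA}_F$ as the union of a continuous chain of one-step free extensions, starting at $A$ and carried out inside $F$, while checking at each stage that the current subfield remains strong in $F$. This would yield both halves of the first assertion at once: strongness of $\gen{A}^{\ELA}_F$ (as a directed union of strong partial E-subfields) and its identification with a free ELA-extension $A^{\ELA}$, since a chain of one-step free extensions whose union is an ELA-field is exactly such an extension by \thref{free ELA construction}.

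First I would set up the chain. Starting from $A_0 = A$, at a stage $A_i$ with $A \strong A_i \strong F$ which is not yet an ELA-field, there is a defect of one of three kinds: the domain $D(A_i)$ is properly contained in the field $A_i$; the field $A_i$ is not relatively algebraically closed in $F$; or some $b \in A_i^\times$ has no logarithm in $A_i$. In each case I pick a witness inside $F$ (available since $F$ is an ELA-field) and adjoin a single new domain element $s$: either $s = x \in A_i \setminus D(A_i)$ together with $\exp_F(x)$; or $s = a \in F \setminus A_i$ algebraic over $A_i$ together with $\exp_F(a)$; or $s = a$ a logarithm of $b$ in $F$. In all three cases $s \notin D(A_i)$, so $\ldim_\Q(s / D(A_i)) = 1$ and $D(A_{i+1}) = D(A_i) + \Q s$.

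The hard part is to check that each such step is a genuine one-step free extension, and this is exactly where strongness is used. Since $A_i \strong F$ we have $\Delta(s/A_i) \geq 0$, and because the $\Q$-linear dimension contributed by $s$ is $1$, this inequality pins down the relevant transcendence degree. In the first two cases $s$ is algebraic over $A_i$ (trivially so when $s = x \in A_i$), and $\Delta(s/A_i) \geq 0$ forces $\exp_F(s)$ to be transcendental over $A_i$, giving a step of the first kind; in the logarithm case $\exp_F(s) = b \in A_i$ and $\Delta(s/A_i) \geq 0$ forces $s$ itself to be transcendental over $A_i$, giving a step of the second kind. In particular $\Delta(s/A_i) = 0$ in every case. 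Using additivity of the predimension, for any finite tuple $c$ from $D(F)$ we get $\Delta(c/A_{i+1}) = \Delta(cs/A_i) - \Delta(s/A_i) = \Delta(cs/A_i) \geq 0$, so $A_{i+1} \strong F$; at limit stages strongness passes to the directed union by finite character of $\Delta$, as in the proof of \thref{hull finite char}(2).

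With routine bookkeeping, every defect is eventually repaired, so the union $M$ of the chain is an ELA-field. Since each adjoined element (an algebraic element, an exponential, or a logarithm) must lie in any ELA-field containing $A \cup \ker_F$, and the kernel ambiguity in choosing logarithms already lies in $A$, we would conclude $M = \gen{A}^{\ELA}_F$. Hence $\gen{A}^{\ELA}_F$ is strong in $F$ and is a free ELA-extension $A^{\ELA}$, proving the first part. For the second part, when the hypotheses of \thref{uniqueness of free extensions} hold the free ELA-extension $A^{\ELA}$ is unique up to isomorphism over $A$, so the isomorphism type of $\gen{A}^{\ELA}_F$ over $A$ is independent of the choice of strong ELA-extension $F$.
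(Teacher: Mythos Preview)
Your proposal is correct and follows exactly the approach the paper indicates: the paper's own proof simply cites \cite[Theorem~2.18]{kirby_finitely_2013}, remarking that the key point is that the ELA-closure is a union of a chain of one-step free extensions, and then invokes \thref{uniqueness of free extensions} for the furthermore clause. You have spelled out precisely this chain argument, including the verification via $\Delta(s/A_i)\geq 0$ that each step really is a one-step free extension and that strongness is preserved, so your write-up is a correct expansion of what the paper defers to the reference.
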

\begin{proof}
This follows from the proof of \cite[Theorem~2.18]{kirby_finitely_2013}, which exploits the fact that the ELA-closure is the union of a chain of one-step free extensions. That theorem is stated with the assumptions (1) or (2) in \thref{rem:uniqueness-countable-case}, but those assumptions are used only in the uniqueness part of the proof. We get the uniqueness in the ``furthermore'' statement instead from \thref{uniqueness of free extensions}.
\end{proof}

It follows from \thref{strong extensions are nearly free} that for any ELA-field $F$ and subset $A$ we have  $\gen{\hull{A}_F}^{\ELA}_F$ isomorphic to $(\hull{A}_F)^{\ELA}$. To simplify notation, we write the former as $\hull{A}^{\ELA}_F$, or just $\hull{A}^{\ELA}$ without the subscript. So $\hull{A}^{\ELA}_F$ is the smallest strong ELA-subfield of $F$ containing $A \cup \ker_F$, and it follows that the category $\ELAFstrong$ has intersections.

\subsection{Free amalgamation}
\thref{prop:amalgam-base} shows that any two kernel-preserving extensions $A \leftarrow C \to B$ of ELA-fields can be amalgamated, and this can be done in many ways. We pick out a particular way to do it freely. Uniqueness of this free amalgamation is intimately connected to stability.
\begin{definition}
Let 
\begin{center}
\begin{tikzcd}
                  & F \\
A \arrow[ru] &                                    & B  \arrow[lu]                    \\
                  & C \arrow[lu] \arrow[ru]                   
\end{tikzcd}
\end{center}
be kernel-preserving inclusions of partial E-fields such that $F$ and $C$ are ELA-fields, $A \cap B = C$, and $F = \gen{A B}^{\ELA}_F$. We say that $F$ is a \emph{free amalgam} of $A$ and $B$ over $C$ if
\begin{enumerate}[label=(\roman*)]
\item $A \ind^{\td}_C B$, and
\item $F$ is a free ELA-extension of its partial E-subfield $\gen{A B}_F$. 
\end{enumerate}
\end{definition}
Given such an $A,B,C$, we can always construct a free amalgam by \thref{cor:independent-ela-amalgamation} and the $(-)^\ELA$ construction. We identify one case where it is unique.
\begin{lemma}\thlabel{uniqueness of free amalgam}
When $C$ is an ELA-field with very full kernel, the free amalgam of $A$ and $B$ over $C$ is unique up to isomorphism. That is, if $A \xrightarrow{f_1} F_1 \xleftarrow{g_1} B$ and $A \xrightarrow{f_2} F_2 \xleftarrow{g_2} B$ are free amalgams over $C$ then there is an isomorphism $\theta: F_1 \to F_2$ such that $\theta f_1 = f_2$ and $\theta g_1 = g_2$.
\end{lemma}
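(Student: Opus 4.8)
The plan is to split the desired isomorphism into two stages, factoring it through the intermediate partial E-subfields $P_i \leteq \gen{AB}_{F_i}$: first produce an isomorphism $P_1 \iso P_2$ fixing $A$ and $B$, and then extend it to $\theta : F_1 \to F_2$ using uniqueness of free ELA-extensions.

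For the first stage, I would argue that $P_1$ and $P_2$ are isomorphic as partial E-fields over $A$ and $B$. Their underlying fields are the compositum of $A$ and $B$ over $C$ inside $F_1$ and $F_2$ respectively. Because $C$ is an ELA-field, it is algebraically closed, so the algebraic independence $A \ind^{\td}_C B$ from condition (i) is equivalent to $A$ and $B$ being linearly disjoint over $C$; hence each underlying field is canonically the fraction field of $A \otimes_C B$, and the two are isomorphic over $A$ and $B$. This field isomorphism automatically respects the partial exponentials: the domain of $P_i$ is $D(A) + D(B)$, on which $\exp$ is forced by additivity through $\exp(a+b) = \exp_A(a) \exp_B(b)$ for $a \in D(A)$ and $b \in D(B)$. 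This recipe is well-defined since $\exp_A$ and $\exp_B$ agree on $D(A) \cap D(B) = D(C)$, and it is manifestly preserved by an isomorphism fixing $A$ and $B$ pointwise.

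For the second stage, I would invoke \thref{uniqueness of free extensions} for the partial E-fields $P_i$. Its hypotheses hold: $P_i$ is generated as a field by $D(P_i) \cup \exp(D(P_i))$ by the definition of $\gen{\cdot}$, and since every embedding in sight is kernel-preserving we have $\ker_{P_i} = \ker_C$, which is very full by assumption. By condition (ii), $F_i = \gen{AB}^\ELA_{F_i}$ is a free ELA-extension of $P_i$, so $F_i \iso P_i^\ELA$ over $P_i$. Combining this with the isomorphism $P_1 \iso P_2$ from the first stage and the uniqueness of the free ELA-extension, I obtain an isomorphism $\theta : F_1 \to F_2$ extending $P_1 \iso P_2$; in particular $\theta f_1 = f_2$ and $\theta g_1 = g_2$.

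I expect the main obstacle to lie in the first stage, specifically in pinning down that the compositum is unique over $A$ and $B$: this is exactly where algebraic closedness of $C$ is needed to promote algebraic independence to linear disjointness, and one must also check that the exponential map on the amalgamated domain is genuinely forced and transported by the field isomorphism. The second stage is then essentially a bookkeeping application of \thref{uniqueness of free extensions}, once its hypotheses are verified for $P_i$.
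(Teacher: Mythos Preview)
Your proposal is correct and follows essentially the same two-stage structure as the paper's proof: first establish that condition (i) pins down $\gen{AB}_{F_i}$ uniquely up to isomorphism over $A$ and $B$, then invoke \thref{uniqueness of free extensions} to extend this to all of $F_i$. The paper's proof is terser and simply asserts that the square is determined by the inclusions and condition (i); your explicit appeal to linear disjointness over the algebraically closed field $C$ (so that the compositum is the fraction field of $A \otimes_C B$) and your verification of the hypotheses of \thref{uniqueness of free extensions} are exactly the details one would supply to unpack that assertion.
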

\begin{proof}
 Note that the inclusions of $C$ into $A$ and $B$ and the first condition above determine the square 
\begin{center}
\begin{tikzcd}
                  & \gen{A B}_F \\
A \arrow[ru] &                                    & B  \arrow[lu]                    \\
                  & C \arrow[lu] \arrow[ru]                   
\end{tikzcd}
\end{center}
uniquely up to isomorphism, and then from \thref{uniqueness of free extensions} we get uniqueness of the amalgam $F$ in the case where $C$ has very full kernel.
\end{proof}

We can use this construction to prove the amalgamation property for strong embeddings.
\begin{lemma}\thlabel{strong amalg lemma}
Suppose that $F$ is a free amalgam of $A$ and $B$ over $C$ as above. Suppose also that $C \strong A$. Then $B \strong \gen{AB}_F$. If also $C \strong B$ then $A \strong \gen{AB}_F$.

In particular, the category $\ELAFstrong$ has amalgamation.
\end{lemma}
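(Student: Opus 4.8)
The plan is to verify the predimension inequality directly from the definition. Recall that $B \strong \gen{AB}_F$ means the (kernel-preserving) inclusion satisfies $\Delta(b / D(B)) \geq 0$ for every finite tuple $b$ from $D(\gen{AB}_F)$. First I would reduce to tuples coming from $D(A)$: since $D(\gen{AB}_F) = D(A) + D(B)$, any such $b$ can be written componentwise as $a + c$ with $a$ a tuple from $D(A)$ and $c$ a tuple from $D(B)$. Because $c$ and $\ker_F$ lie in $D(B)$, subtracting $c$ changes neither $\ldim_\Q(b / D(B), \ker_F)$ nor, using that $\exp(c)$ lies in the underlying field of $B$, the quantity $\td(b, \exp(b) / B, \exp(B), \ker_F)$. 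Hence $\Delta(b / D(B)) = \Delta(a / D(B))$, and it suffices to treat finite tuples $a$ from $D(A)$.

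The core of the argument is a termwise comparison of $\Delta(a / D(B))$ with $\Delta(a / D(C))$, in which the two conditions defining a free amalgam pull in the same direction. On one hand $a, \exp(a)$ lie in the underlying field of $A$, so condition~(i), $A \ind^{\td}_C B$, gives $\td(a, \exp(a) / B) = \td(a, \exp(a) / C)$; enlarging the base from $C$ to $B$ does not drop the transcendence degree. On the other hand $D(C) \subseteq D(B)$ forces $\ldim_\Q(a / D(B), \ker_F) \leq \ldim_\Q(a / D(C), \ker_F)$. Combining these,
\[
\Delta(a / D(B)) = \td(a, \exp(a) / C) - \ldim_\Q(a / D(B), \ker_F) \geq \Delta(a / D(C)) \geq 0,
\]
where the last inequality is precisely the hypothesis $C \strong A$. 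This establishes $B \strong \gen{AB}_F$. The main obstacle—and the one genuinely load-bearing step—is the transcendence-degree equality: naively, passing to the larger base $B$ ought to decrease $\td$, and it is exactly algebraic independence over $C$ that rules out any such drop. Everything else is monotonicity bookkeeping.

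For the second statement I would note that the notion of free amalgam is symmetric in $A$ and $B$ (the conditions $A \cap B = C$, $A \ind^{\td}_C B$, and $F$ being a free ELA-extension of $\gen{AB}_F$ are all symmetric), so the identical computation with the roles of $A$ and $B$ exchanged yields $A \strong \gen{AB}_F$ from $C \strong B$. Finally, for amalgamation in $\ELAFstrong$, given strong embeddings $C \strong F_1$ and $C \strong F_2$ of ELA-fields, I would apply \thref{cor:independent-ela-amalgamation} to amalgamate $F_1$ and $F_2$ over $C$ with $F_1 \ind^{\td}_C F_2$; since $C$ is algebraically closed this also gives $F_1 \cap F_2 = C$, and taking the free ELA-closure $F = \gen{F_1 F_2}^\ELA_F$ produces a free amalgam. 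The first two parts give $F_1 \strong \gen{F_1 F_2}_F$ and $F_2 \strong \gen{F_1 F_2}_F$, while $\gen{F_1 F_2}_F \strong F$ because free ELA-extensions are strong, as established above. As strong embeddings compose, we obtain strong embeddings $F_1 \strong F$ and $F_2 \strong F$ of ELA-fields, exhibiting the required amalgam.
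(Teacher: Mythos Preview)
Your proof is correct and is precisely the ``straightforward predimension calculation, using the fact that $A \ind^{\td}_C B$, that $C$ is an ELA-field, and that the kernel does not extend'' that the paper alludes to; the paper does not spell out the details but refers to \cite[Proposition~5.7]{bays_pseudo-exponential_2018}, and your argument is a faithful unpacking of that computation. One minor remark: since $A \cap B = C$ you actually get $\ldim_\Q(a/D(B),\ker_F) = \ldim_\Q(a/D(C),\ker_F)$ (any $\Q$-linear dependence of $a \in D(A)$ over $D(B)$ already lies in $D(A)\cap D(B)=D(C)$), so $\Delta(a/D(B)) = \Delta(a/D(C))$, but your inequality suffices.
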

\begin{proof}
This is a straightforward predimension calculation, using the fact that $A \ind^{\td}_C B$, that $C$ is an ELA-field, and that the kernel does not extend. See \cite[Proposition~5.7]{bays_pseudo-exponential_2018} for the proof in a more general setting.
\end{proof}

\begin{prop}
Each JEP-class in $\ELAFstrong$ is an AEC with amalgamation, joint embedding, and intersections.
\end{prop}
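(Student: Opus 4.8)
The plan is to assemble the four required properties, most of which are either already established or immediate, the genuine work being the verification of the AEC axioms for strong embeddings. First I would dispose of amalgamation, joint embedding, and intersections. Amalgamation within a JEP-class is essentially \thref{strong amalg lemma}: given strong embeddings $C \strong A$ and $C \strong B$, forming a free amalgam $F$ gives $A \strong \gen{AB}_F$ and $B \strong \gen{AB}_F$, and since free ELA-extensions are strong and strong embeddings compose, the ELA-field $F = \hull{AB}^\ELA_F$ strongly extends both factors, so stays in the same JEP-class. Joint embedding holds by the very definition of a JEP-class. Intersections follow from the observation recorded after \thref{strong extensions are nearly free} that $\ELAFstrong$ is closed under $A \mapsto \hull{A}^\ELA$, an intersection of subobjects being again a subobject of the ambient field and hence in the same JEP-class.

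The substance is therefore to check that a single JEP-class is an AEC: closed under isomorphism, coherent, closed under unions of chains, and with the downward L\"owenheim--Skolem property. Closure under isomorphism is immediate since isomorphisms are strong. For coherence, suppose $A \subseteq B \subseteq C$ with $A \strong C$ and $B \strong C$; since all embeddings are kernel-preserving we have $\ker_A = \ker_B = \ker_C$, so the predimension $\Delta$ of any finite tuple from $D(B) \subseteq D(C)$ is computed identically in $B$ and in $C$, and hence $A \strong C$ already forces $A \strong B$ (the hypothesis $B \strong C$ is not even needed). For a chain $(A_i)_{i<\lambda}$ of strong embeddings, the union $A$ is again an ELA-field with the same kernel, and to see both $A_i \strong A$ and, for the ``furthermore'' clause, $A \strong B$ whenever every $A_i \strong B$, I would use finite character of $\Delta$: for a fixed finite tuple $b$ the quantities $\td(b, e^b / A_j, e^{A_j}, \ker)$ and $\ldim_\Q(b / A_j, \ker)$ are non-increasing in $j$ and stabilise at their values over the union after some index, so $\Delta(b/A) = \Delta(b/A_j) \geq 0$ for all large $j$.

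For the downward L\"owenheim--Skolem property I would take, given $A$ in the JEP-class and $S \subseteq A$, the subobject $B = \hull{S}^\ELA_A$. This is strong in $A$ by \thref{hull finite char}(1) together with the discussion following \thref{strong extensions are nearly free}, it is an ELA-field, and being strongly embedded in $A$ it lies in the same JEP-class. For the cardinality bound I would use \thref{hull finite char}(2),(3): by finite character $\hull{S} = \bigcup_{S_0 \subseteq_{\text{fin}} S} \hull{S_0}$, and each finite step adds only finitely many $\Q$-linear dimensions, so $\ldim_\Q(\hull{S}/\ker_A) \leq |S| + \aleph_0$; passing to the ELA-closure adds at most countably many elements per generator. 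Hence $|B| \leq |S| + |K| + \aleph_0$, giving LS-cardinal $|K| + \aleph_0$, where $K$ is the fixed kernel type.

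The main obstacle, and the reason for restricting to a single JEP-class rather than treating $\ELAFstrong$ as a whole, is precisely this last point: the L\"owenheim--Skolem cardinal is forced to be at least $|K|$, and since the kernel size is unbounded across JEP-classes no single $\kappa$ works for the whole category. Fixing a JEP-class pins down $|K|$, since any common strong extension of two objects has kernel isomorphic to each of theirs, so all objects in the class share an isomorphic kernel. This exactly mirrors the role of the kernel type $K$ in the earlier treatment of $\ELAFKkp$. I expect the predimension bookkeeping in the coherence and chain steps to be routine given finite character of $\Delta$ and the fact that embeddings preserve the kernel, so the only care needed is the dimension count in the L\"owenheim--Skolem step.
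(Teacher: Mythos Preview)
Your approach matches the paper's: amalgamation via \thref{strong amalg lemma}, intersections via $\hull{-}^\ELA$ as noted after \thref{strong extensions are nearly free}, and the AEC axioms checked directly, with the hull construction supplying the downward L\"owenheim--Skolem witness.

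There is one genuine slip in your L\"owenheim--Skolem bound. You claim $\ldim_\Q(\hull{S}/\ker_A) \leq |S| + \aleph_0$ on the grounds that ``each finite step adds only finitely many $\Q$-linear dimensions'', but \thref{hull finite char}(3) only gives $\ldim_\Q(\hull{Ca}/C)$ finite when $C \strong F$, and the kernel itself need not be strong in $F$. What the argument actually yields is $\ldim_\Q(\hull{S}/\hull{\emptyset}) \leq |S| + \aleph_0$, so the correct LS-cardinal is $|\hull{\ker_F}_F| + \aleph_0$ rather than $|\ker_F| + \aleph_0$, and these can differ. The hull of the kernel is constant on a JEP-class (strong embeddings preserve hulls, by the remark in \thref{def:hull}), and this is precisely the invariant the paper uses. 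Your observation that a JEP-class pins down the kernel is correct but not quite sufficient; it is the hull of the kernel that must be pinned down.

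A smaller point: in your coherence step, the parenthetical ``the hypothesis $B \strong C$ is not even needed'' overstates things. You do need the kernel-preserving part of $B \strong C$ to conclude $\ker_B = \ker_C$, which is exactly what lets you compute $\Delta_B(b/A) = \Delta_C(b/A)$; only the predimension-inequality part of $B \strong C$ goes unused.
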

\begin{proof}
Clearly $\ELAFstrong$ is closed under isomorphisms. Coherence is well known (see e.g.\ \cite[Lemma 3.11(d)]{kirby_exponentially_2014}) and easily follows from the definition of strong embeddings. It is also straightforward to verify that we have unions of chains, using finite character of the properties involved. For downward L\"owenheim-Skolem we can, given any $A \subseteq F$, consider $\hull{A}_F^\ELA$, which will always be bounded in cardinality by $|A| + |\hull{\ker_F}_F|$, and this hull of the kernel is constant on JEP-classes. \thref{strong amalg lemma} gives amalgamation, JEP is immediate and we observed closure under intersections above, after \thref{strong extensions are nearly free}.
\end{proof}

\subsection{Closed embeddings}

Recall that the exponential algebraic closure pregeometry depends on existential information, so if $F_1 \into F_2$ is an extension of exponential fields, $\ecl_{F_1}$ may not be the restriction to $F_1$ of $\ecl_{F_2}$. Indeed $\pi$ is exponentially algebraic in $\Cexp$, because $e^{i\pi} + 1=0$ but, assuming Schanuel's conjecture, $\pi$ is actually exponentially transcendental in $\Rexp$.

\begin{definition}
An embedding $F_1 \into F_2$ of exponential fields is said to be \emph{closed} if $\ecl_{F_2}(F_1) = F_1$, or equivalently if for all $A \subseteq F_1$ we have $\ecl_{F_1}(A) = \ecl_{F_2}(A)$.
\end{definition}
It follows immediately that the independence relation $\ind^{\etd}$ satisfies \textsc{Invariance} for closed embeddings of exponential fields.

Like strong embeddings, closed embeddings can be characterised by the predimension function $\Delta$, and indeed the predimension function also characterises exponential transcendence degree.
\begin{theorem}\thlabel{etd and predim}
Let $F$ be an exponential field. Then $B$ is exponentially-algebraically closed in $F$ iff $\ker_F \subseteq B$ and for any finite tuple $a$ from $F$, not contained in $B$, we have $\Delta(a/B) \geq 1$. In particular, closed embeddings are strong embeddings.

Furthermore, if $C \strong F$ and $a$ is any finite tuple from $F$ then 
\[
\etd(a/C) = \min \{\Delta(ab/C) : b \subseteq F\}.
\]
\end{theorem}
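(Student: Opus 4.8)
The plan is to set up a dictionary between exponential algebraicity (the existence of a nonsingular Khovanskii system) and the predimension $\Delta$ being $\le 0$: for a base $B$ with $\ker_F\subseteq B$ and a finite tuple $a$ from $F$, some coordinate of $a$ lies in $\ecl_F(B)\setminus B$ if and only if there is a tuple $a'\supseteq a$ with a coordinate outside $B$ satisfying $\Delta(a'/B)\le 0$. Granting this, the first two assertions fall out quickly. First, every $\kappa\in\ker_F$ is exponentially algebraic over $\emptyset$, witnessed by $f(X)=e^X-1$ with derivative $e^X$ nonzero at $\kappa$; hence $\ker_F\subseteq\ecl_F(B)$ always, giving $\ker_F\subseteq B$ whenever $B$ is closed. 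If moreover some finite $a\not\subseteq B$ had $\Delta(a/B)\le 0$, the dictionary would produce an element of $\ecl_F(B)\setminus B$, against closedness; so $\Delta(a/B)\ge 1$ for all such $a$. Conversely, if $B$ is not closed, pick $a_1\in\ecl_F(B)\setminus B$ and its Khovanskii witness to obtain a tuple $a\ni a_1$, so $a\not\subseteq B$, with $\Delta(a/B)\le 0<1$, contradicting the hypothesis. (The predimension condition in fact forces $\ker_F\subseteq B$ on its own, since a kernel element $\kappa\notin B$ gives the singleton $\{\kappa\}\not\subseteq B$ with $\Delta(\kappa/B)=0$.) The claim that closed embeddings are strong is then immediate: for $B$ closed, $\Delta(a/B)=0$ when $a\subseteq B$ and $\Delta(a/B)\ge 1$ otherwise, so $\Delta(a/B)\ge 0$ always, and together with $\ker_F\subseteq B$ this is exactly strongness of $B\into F$.

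It remains to prove the dictionary, and I would split it into two implications. The easy one (Khovanskii system $\Rightarrow\Delta\le 0$) is a dimension count. Given a nonsingular Khovanskii system $f_1,\dots,f_n$ for $\bar a=(a_1,\dots,a_n)$ over $B$, I would first use any $\Q$-linear dependence of $\bar a$ over $B+\ker_F$ to eliminate one variable (keeping $a_1$, which is legitimate since $\ker_F\subseteq B$ and a dependence of $a_1$ alone would force $a_1\in\linspan(B)\subseteq B$), reducing to the case where $\bar a$ is $\Q$-linearly independent over $B+\ker_F$, so $\ldim_\Q(\bar a/B,\ker_F)=n$. Writing $P_i(U,W)$ for the underlying polynomials, the chain rule gives $(i,j)$ Jacobian entry $\partial_{U_j}P_i+e^{a_j}\partial_{W_j}P_i$, so nonsingularity says precisely that the tangent space at $(\bar a,e^{\bar a})$ to the locus $\{P_i=0\}$ meets the exponential infinitesimal subspace $\{W_j=e^{a_j}U_j\}$ only in $0$. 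Since that subspace has dimension $n$ in the $2n$-dimensional ambient tangent space, the locus has tangent dimension at most $n$, whence $\td(\bar a,e^{\bar a}/B,e^B)\le n$ and $\Delta(\bar a/B)\le 0$. The hard implication ($\Delta\le 0\Rightarrow$ an exponentially algebraic element outside $B$) is the main obstacle, and is essentially the content of \cite[Theorem~1.1]{kirby_exponential_2010}: choosing $a$ to minimise $\Delta(a/B)$ and then $\ldim_\Q(a/B)$, the minimality forces the tangent intersection above to vanish, so the defining relations can be organised into a nonsingular Khovanskii system, exhibiting the coordinates of $a$ as exponentially algebraic over $B$.

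Finally, for the displayed formula I would fix $C\strong F$ and set $d(a/C)=\min\{\Delta(ab/C):b\subseteq F\text{ finite}\}$. Strongness gives $d\ge 0$, and the dictionary specialises to $d(x/C)=0\iff x\in\ecl_F(C)$. Since $\Delta$ is submodular (submodularity of transcendence degree together with modularity of $\ldim_\Q$), the minimised function $d(\cdot/C)$ is again submodular, monotone and of finite character, and one checks its increments $d(ax/C)-d(a/C)$ lie in $\{0,1\}$, vanishing exactly when $x\in\ecl_F(Ca)$; the passage through non-strong intermediate bases $Ca$ is handled by replacing $Ca$ with its hull $\hull{Ca}$, which is strong and finitely generated over $C$ by \thref{hull finite char}. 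Thus $d(\cdot/C)$ is the rank function of a matroid on $F$ whose closure operator is the restriction of $\ecl_F$. As $\ecl_F$ is a pregeometry with rank function $\etd$, and a finitary matroid is determined by its closure operator, we conclude $d=\etd$, that is $\etd(a/C)=\min\{\Delta(ab/C):b\subseteq F\}$. The single genuinely hard input is the second implication of the dictionary, for which I would lean on Kirby's theorem.
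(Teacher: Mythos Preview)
Your sketch is essentially a correct unpacking of the argument behind the theorem, but you should be aware that the paper's own proof is a two-line citation: the displayed formula is quoted as \cite[Theorem~1.3]{kirby_exponential_2010}, and the first two assertions are said to follow. So what you have written is not so much an alternative proof as a reconstruction of the content of that reference, and you correctly identify the one genuinely hard ingredient as the implication ``$\Delta\le 0\Rightarrow$ exponentially algebraic'', which is the substance of \cite[Theorem~1.1]{kirby_exponential_2010}. Organisationally you differ slightly: the paper derives the characterisation of closed subsets \emph{from} the formula, whereas you derive both the characterisation and the formula from your dictionary. Either route works, and yours is arguably cleaner since applying the formula with $C=B$ already presupposes $B\strong F$.

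Two places in your sketch deserve more care. First, your reduction step in the easy direction --- eliminating a $\Q$-linear dependence of $\bar a$ over $B+\ker_F$ while preserving nonsingularity of the Khovanskii system --- is not as innocent as you make it sound: the substitution on the exponential side involves rational exponents, so one does not get a polynomial system directly. The cleanest fix is not to reduce the system at all, but to observe that removing a linearly dependent coordinate $a_n$ leaves $\Delta$ unchanged (both $\td$ and $\ldim$ drop by zero, since $e^{a_n}$ is algebraic over $e^{a_1},\dots,e^{a_{n-1}},e^B$), so it suffices to bound $\td(\bar a,e^{\bar a}/B,e^B,\ker_F)$ by the number of linearly independent coordinates; this is what the tangent-space argument actually gives once one restricts to a basis. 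Second, your statement of the dictionary (``some coordinate of $a$ lies in $\ecl_F(B)\setminus B$ iff there is $a'\supseteq a$ with a coordinate outside $B$ and $\Delta(a'/B)\le 0$'') is not quite the right equivalence as written --- in the reverse direction the exponentially algebraic element produced need not lie in the original $a$. What you actually use, and what Kirby proves, is the sharper statement that for $\bar a$ $\Q$-linearly independent over $B+\ker_F$ with $\Delta(\bar a/B)\le 0$, every coordinate of $\bar a$ lies in $\ecl_F(B)$; this is enough for both applications you make of it.
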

\begin{proof}
The furthermore part is \cite[Theorem~1.3]{kirby_exponential_2010}, and the rest of the theorem follows.
\end{proof}

\section{EA-independence}

Recall that for an EA-field $F$ and a subset $A \subseteq F$ we write $\gen{A}_F^\EA$, or just $\gen{A}^\EA$ when $F$ is clear, for the smallest EA-subfield of $F$ containing $A$. 

We recall the following independence relation for EA-fields from \cite[Definition 5.1]{haykazyan_existentially_2021}. 
\begin{definition}
\thlabel{def:ea-independence}
We define $\ind^\EA$-independence as follows. Let $F$ be an EA-field and $A, B, C \subseteq F$, then:
\[
A \ind_C^{\EA, F} B \quad \Longleftrightarrow \quad \langle AC \rangle^\EA \ind_{\langle C \rangle^\EA}^{\td} \langle BC \rangle^\EA.
\]
\end{definition}
In \cite{haykazyan_existentially_2021} it was shown that this independence relation is an NSOP$_1$-like independence relation in some sense, but the list of properties proved there is not exactly the list needed for the canonicity theorem, so we explain why the extra properties also hold. We also provide a counterexample to \textsc{Base-Monotonicity}, \thref{ex:base-monotonicity-failure}, giving a direct proof that this independence relation is not simple.

\begin{proposition}
On any EA-field $F$, $\ind^\EA$ satisfies the six basic properties of an independence relation from \thref{def:independence-relation}.
\end{proposition}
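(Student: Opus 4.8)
The plan is to derive each of the six basic properties for $\ind^\EA$ directly from the corresponding property of field-theoretic algebraic independence $\ind^\td$, which is already a (stable) independence relation on the underlying field of $F$, together with two structural facts about the operator $\gen{\cdot}^\EA$: that it is a monotone, idempotent closure operator, and that it has \emph{finite character}, i.e. $\gen{A}^\EA = \bigcup\{\gen{A_0}^\EA : A_0 \subseteq A \text{ finite}\}$. The latter holds because an EA-subfield is generated from its seed by an $\omega$-length iteration of the finitary operations of field generation, (field-theoretic) algebraic closure, and application of $\exp$, so every element of $\gen{A}^\EA$ already lies in $\gen{A_0}^\EA$ for some finite $A_0 \subseteq A$.

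Four of the six properties will be immediate unpackings of the definition. For \textsc{Symmetry} I would simply apply symmetry of $\ind^\td$ to $\gen{AC}^\EA \ind^\td_{\gen{C}^\EA} \gen{BC}^\EA$. For \textsc{Existence} I note that $\gen{CC}^\EA = \gen{C}^\EA$, so $A \ind_C^\EA C$ unpacks to $\gen{AC}^\EA \ind^\td_{\gen{C}^\EA} \gen{C}^\EA$, which is exactly \textsc{Existence} for $\ind^\td$. For \textsc{Normality} the key observation is that $\gen{(BC)C}^\EA = \gen{BC}^\EA$, so $A \ind_C^\EA BC$ is literally the same statement as $A \ind_C^\EA B$. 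For \textsc{Monotonicity}, if $D \subseteq B$ then $DC \subseteq BC$, hence $\gen{DC}^\EA \subseteq \gen{BC}^\EA$ by monotonicity of the closure operator, and \textsc{Monotonicity} of $\ind^\td$ on the right gives the conclusion.

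For \textsc{Finite Character} I would combine finite character of the closure operator with finite character of $\ind^\td$. Given a finite tuple $E \subseteq \gen{AC}^\EA$, finite character of $\gen{\cdot}^\EA$ provides a finite $D \subseteq A$ with $E \subseteq \gen{DC}^\EA$; the hypothesis $D \ind_C^\EA B$ gives $\gen{DC}^\EA \ind^\td_{\gen{C}^\EA} \gen{BC}^\EA$, and monotonicity shrinks the left side to $E$. As this holds for every finite $E \subseteq \gen{AC}^\EA$, finite character of $\ind^\td$ yields $\gen{AC}^\EA \ind^\td_{\gen{C}^\EA} \gen{BC}^\EA$, i.e. $A \ind_C^\EA B$.

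The one property requiring a genuine argument is \textsc{Transitivity}, and this is where the interaction between the base change and the closures must be handled; I expect it to be the main obstacle, though a short one. Assume $C \subseteq D$, $A \ind_C^\EA D$ and $A \ind_D^\EA B$. Since $C \subseteq D$ we have $\gen{DC}^\EA = \gen{D}^\EA$, so the first hypothesis reads $\gen{AC}^\EA \ind^\td_{\gen{C}^\EA} \gen{D}^\EA$, while the second reads $\gen{AD}^\EA \ind^\td_{\gen{D}^\EA} \gen{BD}^\EA$. From $\gen{AC}^\EA \subseteq \gen{AD}^\EA$ and $\gen{BC}^\EA \subseteq \gen{BD}^\EA$, monotonicity of $\ind^\td$ on both sides (using its symmetry for the left side) gives $\gen{AC}^\EA \ind^\td_{\gen{D}^\EA} \gen{BC}^\EA$. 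Now \textsc{Transitivity} of $\ind^\td$, applied along the chain $\gen{C}^\EA \subseteq \gen{D}^\EA$ to the relations $\gen{AC}^\EA \ind^\td_{\gen{C}^\EA} \gen{D}^\EA$ and $\gen{AC}^\EA \ind^\td_{\gen{D}^\EA} \gen{BC}^\EA$, yields $\gen{AC}^\EA \ind^\td_{\gen{C}^\EA} \gen{BC}^\EA$, which is precisely $A \ind_C^\EA B$.
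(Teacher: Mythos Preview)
Your proof is correct and follows exactly the approach the paper indicates: the paper's own proof consists of the single sentence ``All immediate from the definition or the corresponding properties of $\ind^{\td}$ and of the $\gen{-}^\EA$-closure operator,'' and your argument is a careful unpacking of precisely that, verifying each of the six properties in turn from the corresponding property of $\ind^{\td}$ together with monotonicity, idempotence, and finite character of $\gen{-}^\EA$.
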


\begin{proof}
All immediate from the definition or the corresponding properties of $\ind^{\td}$ and of the $\gen{-}^\EA$-closure operator.
\end{proof}
We could in fact define $\ind^\EA$ on an E-field or even a partial E-field $F$ rather than an EA-field, and prove the same result, if we relativise the EA-closure operator inside $F$. However, we will not make use of that.

We give an example to show that \textsc{Base-Monotonicity} can fail, so $\ind^\EA$ is not a simple independence relation on \EAF. 
\begin{example}
\thlabel{ex:base-monotonicity-failure}
Let $C$ be any EA-field. Let $F$ be the field $F=C(a,d,b_1,b_2)^{\alg}$, where $a,d,b_1,b_2$ are algebraically independent over $C$.
We consider various algebraically closed subfields of $F$, and will make them into EA-fields.

Let $A = C(a)^{\alg}$ and $D = C(d)^{\alg}$, and choose any exponential maps on them extending that on $C$ to make them EA-field extensions of $C$.
Let $B = D(b_1,b_2)^{\alg}$, and choose any exponential map making it an EA-field extension of $D$.

Let $t= ab_1+b_2 \in F$. Then $t$ is transcendental over $A\cup D$, and transcendental over $B$.
Let $E = A(d,t)^{\alg}$. We choose a point $u \in C(a,d)^{\alg}$ which is not in the $\Q$-linear span $A+B$, for example take $u=ad$. 
Then we can extend the exponential map from $A+B$ to an exponential map on $E$ such that  $\exp(u) = t$. 

Then we choose any exponential map on $F$ extending that on $E+B$.

Then the EA-closure of $A\cup D$ in $F$ is $E$.

We have the following diagram of EA-fields, with transcendence degrees of each extension as given.

\begin{center}
\begin{tikzcd}
                  &                                    & F                                  &                    \\
                  & E \arrow[ru, "1"]                  &                                    & B \arrow[lu, "1"'] \\
A \arrow[ru, "2"] &                                    & D \arrow[ru, "2"'] \arrow[lu, "2"] &                    \\
                  & C \arrow[lu, "1"] \arrow[ru, "1"'] &                                    &                   
\end{tikzcd}
\end{center}

Now we have $C \seq D\seq B$ and by considering transcendence degrees, we see that $A\ind_C^{\td} B$ and thus $A \ind_C^\EA B$ but $E \nind_D^{\td} B$, and so $A \nind_D^\EA B$. So \textsc{Base-Monotonicity} does not hold.

\begin{remark}
This gives a good illustration of what the \textsc{Base-Monotonicity} property means. To see whether or not $\A \ind_C^{\EA} B$, we look only at $\gen{AC}^\EA \cup \gen{BC}^\EA$, not at all of $\gen{ABC}^\EA$. 
\end{remark}
\begin{remark}
We can contrast this example with the theory ACFA of (existentially closed) fields with an automorphism, $\sigma$. This is a simple theory, with simple independence relation given by $A \ind^{\mathrm{ACFA}}_C B$ if and only if $\sigma\text{-cl}(AC) \ind^{\td}_{\sigma\text{-cl}(C)} \sigma\text{-cl}(BC)$, where $\sigma\text{-cl}(X)$ means the closure of $X$ under $\sigma$, $\sigma^{-1}$, and field-theoretic algebraic closure. 

If we try to construct an example similar to \thref{ex:base-monotonicity-failure} but with $\sigma$-closed fields in place of EA-fields, we find that the field $E$, which is now the $\sigma$-closure of $A \cup D$, is just the field-theoretic algebraic closure of $A \cup D$, because the automorphism $\sigma$ commutes with the field operations. Of course as $\ind^{\mathrm{ACFA}}$ is simple it does satisfy \textsc{Base-Monotonicity}.
\end{remark}

\end{example}

We can now put together the proof that $\ind^\EA$ is an NSOP$_1$-like non-simple independence relation on the category \EAF\ of EA-fields.
\begin{proof}[Proof of \thref{thm:ea-nsop1-like-independence}]
Since \textsc{Base-Monotonicity} fails, $\ind^\EA$ is non-simple.

The $\gen{-}^\EA$-closure operator respects embeddings of EA-fields, so \textsc{Invariance} holds. 
The \textsc{Extension} property is verified in \cite[Proposition 10.5]{dobrowolski_kim-independence_2022}.
\textsc{3-amalgamation} is verified in \cite[Theorem 6.5]{haykazyan_existentially_2021}. (In fact, $n$-amalgamation is proved in \cite[Theorem 5.4]{haykazyan_existentially_2021}.)

Finally, following \cite[Remark 9.8]{dobrowolski_kim-independence_2022}, \textsc{Club Local Character} with $\lambda=\aleph_1$ follows using the same methods as in \cite{kaplan_local_2019}, because \cite[Theorem 6.5]{haykazyan_existentially_2021} actually gives us a strengthened version of \textsc{Finite Character} called \textsc{Strong Finite Character}.
\end{proof}
We note that the \textsc{Strong Finite Character} property makes use of formulas, and so this proof of \textsc{Club Local Character} makes essential use of the fact that \EAF\ is the category of models of some theory. The other proofs are more algebraic (semantic) in nature.

\section{ELA-independence}
We now come to a relation of independence which takes account of the kernel of the exponential map, and so is appropriate when we have fixed the kernel.

Recall that for an ELA-field $F$ and a subset $A \subseteq F $, we write $\gen{A}^\ELA_F$ or just $\gen{A}^\ELA$, for the smallest ELA-subfield of $F$ containing $A \cup \ker_F$.

\begin{definition}
\thlabel{def:ela-independence}
We define $\ind^\ELA$-independence as follows. Let $F$ be an ELA-field and $A, B, C \subseteq F$, then:
\[
A \ind_C^{\ELA, F} B \quad \Longleftrightarrow \quad \gen{AC}^\ELA \ind_{\gen{C}^\ELA}^{\td} \gen{BC}^\ELA.
\]
\end{definition}

\begin{proposition}
On any ELA-field $F$, $\ind^\ELA$-independence satisfies the six basic properties of an independence relation from \thref{def:independence-relation}.
Furthermore, it satisfies \textsc{Invariance}  for kernel-preserving embeddings, so is an independence relation on \ELAFkp.
\end{proposition}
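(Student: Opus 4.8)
The plan is to deduce each of the six basic properties of $\ind^\ELA$ from the corresponding property of the field-theoretic independence relation $\ind^{\td}$, transported through the closure operator $\gen{-}^\ELA$. The only facts I would need about this operator are that it is monotone and idempotent, that it has finite character \emph{relative to a fixed base} (so that $\gen{AC}^\ELA = \bigcup_{A_0 \subseteq_{\mathrm{fin}} A} \gen{A_0 C}^\ELA$, using that $\ker_F \subseteq \gen{C}^\ELA$ always holds), and the already-noted fact that $\gen{-}^\ELA$ is unchanged under a kernel-preserving inclusion of ELA-fields. Since $\ind^{\td}$ is a genuine (indeed stable) independence relation on fields, it satisfies all six basic properties, and this is the engine I would feed in.

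For the routine properties I would argue as follows. \textsc{Normality} is immediate because $\gen{(BC)C}^\ELA = \gen{BC}^\ELA$, so the defining $\td$-statement for $A \ind^\ELA_C BC$ is literally the one for $A \ind^\ELA_C B$. \textsc{Existence} reduces to $\gen{AC}^\ELA \ind^{\td}_{\gen{C}^\ELA} \gen{C}^\ELA$, which is \textsc{Existence} for $\ind^{\td}$. \textsc{Monotonicity} follows since $D \subseteq B$ gives $\gen{DC}^\ELA \subseteq \gen{BC}^\ELA$, so \textsc{Monotonicity} of $\ind^{\td}$ applies, and \textsc{Symmetry} is just \textsc{Symmetry} of $\ind^{\td}$. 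For \textsc{Finite Character}, I would use finite character of $\gen{-}^\ELA$ over the base $C$: any finite tuple inside $\gen{AC}^\ELA$ already lies in $\gen{A_0 C}^\ELA$ for some finite $A_0 \subseteq A$, so the hypotheses $\gen{A_0 C}^\ELA \ind^{\td}_{\gen{C}^\ELA} \gen{BC}^\ELA$ together with \textsc{Finite Character} of $\ind^{\td}$ give the conclusion.

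\textsc{Transitivity} is the one property needing a short diagram chase, and I expect it to be the main point to get right. Writing out the hypotheses using $C \subseteq D$ (so $\gen{DC}^\ELA = \gen{D}^\ELA$ and $\gen{C}^\ELA \subseteq \gen{D}^\ELA$), the statement $A \ind^\ELA_C D$ becomes $\gen{AC}^\ELA \ind^{\td}_{\gen{C}^\ELA} \gen{D}^\ELA$, and $A \ind^\ELA_D B$ becomes $\gen{AD}^\ELA \ind^{\td}_{\gen{D}^\ELA} \gen{BD}^\ELA$. From the latter, \textsc{Symmetry} and \textsc{Monotonicity} of $\ind^{\td}$ (restricting $\gen{AD}^\ELA$ to its subset $\gen{AC}^\ELA$) give $\gen{AC}^\ELA \ind^{\td}_{\gen{D}^\ELA} \gen{BD}^\ELA$. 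Now \textsc{Transitivity} of $\ind^{\td}$, applied with bases $\gen{C}^\ELA \subseteq \gen{D}^\ELA$, combines this with the first statement to yield $\gen{AC}^\ELA \ind^{\td}_{\gen{C}^\ELA} \gen{BD}^\ELA$, and a final application of \textsc{Monotonicity} (since $\gen{BC}^\ELA \subseteq \gen{BD}^\ELA$) gives $\gen{AC}^\ELA \ind^{\td}_{\gen{C}^\ELA} \gen{BC}^\ELA$, i.e.\ $A \ind^\ELA_C B$.

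Finally, for \textsc{Invariance} under a kernel-preserving embedding $f : F_1 \into F_2$ of ELA-fields, I would identify $F_1$ with $f(F_1) \subseteq F_2$, so that the inclusion is kernel-preserving. The crucial input is that $\gen{-}^\ELA$ is insensitive to such inclusions, giving $f(\gen{AC}^\ELA_{F_1}) = \gen{f(A)f(C)}^\ELA_{F_2}$ and likewise for the other closures. Since $\ind^{\td}$ is invariant under field embeddings, transporting the defining $\td$-statement along $f$ and rewriting the closures yields exactly $f(A) \ind^{\ELA,F_2}_{f(C)} f(B)$, and conversely. The place where kernel-preservation is essential is precisely this step, and this is the real content of the statement: by definition $\gen{-}^\ELA$ always contains the whole kernel, so under a non-kernel-preserving embedding, where $\ker_{F_2}$ strictly contains $f(\ker_{F_1})$, the closure $\gen{f(A)f(C)}^\ELA_{F_2}$ would be strictly larger than $f(\gen{AC}^\ELA_{F_1})$ and the defining $\td$-statement need not transport. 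This is exactly why $\ind^\ELA$ is an independence relation on $\ELAFkp$ rather than on $\ELAF$.
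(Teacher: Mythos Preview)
Your proof is correct and follows exactly the approach the paper takes: the paper's own proof simply says the basic properties are ``almost immediate, as for $\ind^\EA$'' (i.e.\ inherited from $\ind^{\td}$ through the closure operator) and that \textsc{Invariance} holds because $\gen{-}^\ELA$ is preserved under kernel-preserving embeddings. You have just unpacked that one-line justification in full detail, including the small chase needed for \textsc{Transitivity}, which the paper leaves implicit.
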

\begin{proof}
The basic properties are almost immediate, as for $\ind^\EA$.
Since the ELA-closure $\gen{-}^\ELA$ is preserved under kernel-preserving embeddings of ELA-fields, the \textsc{Invariance} property holds on \ELAFkp.
\end{proof}

A variant of \thref{ex:base-monotonicity-failure} shows that \textsc{Base-Monotonicity} fails, so it is not simple.
\begin{example}
Let $C$ be an ELA-field, and take $F$ to be the ELA-extension of $C$ generated by algebraically independent elements $a,d,b_1,b_2$ subject only to the relation $\exp(ad) = ab_1+b_2$.  Now we define several $\ELA$-subfields of $F$, namely $A = \gen{Ca}^{\ELA}_F$, $D=\gen{Cd}^{\ELA}_F$, $B= \gen{Db_1,b_2}^{\ELA}_F$, $E=\gen{A\cup D}^\EA_F$. 

From the freeness of the construction we see that $A\ind_C^{\td} B$ and therefore $A \ind_C^\ELA B$. On the other hand, looking at the elements $a, ab_1+b_2, b_1, b_2$ we see that $E \nind_D^{\td} B$, and so $A \nind_D^\ELA B$. Thus, $\ind^\ELA$ does not satisfy \textsc{Base-Monotonicity}.
\end{example}

While they look similar, $\EA$-independence and $\ELA$-independence are different.
\begin{example}
We construct an ELA-field $F$ and EA-subfields $A,B,C$ with the same kernel such that 
\[A \ind_C^{\EA, F} B \quad \text{ but } \quad A \nind_C^{\ELA, F} B .\]

To do this, take any ELA-field $C$, for example $\Cexp$. Let $F \leteq C(d)^{\ELA}$ be the free ELA-extension of $C$ on a single generator $d$, as in \thref{free ELA construction}. Then $F$ has infinite transcendence degree over $C$, and the same kernel.

Let $a \leteq e^d$, $b \leteq e^{d^2}$,  $A \leteq \gen{C(a)}^{\EA}_F$ and $B \leteq \gen{C(b)}^{\EA}_F$.

Then $\gen{A}^\ELA_F = F = \gen{B}^\ELA_F$, and so $A \nind_C^{\ELA, F} B$.

However, $a$ and $b$ are algebraically independent over $C$, and so the freeness of the construction of $F$ ensures that $A \ind_C^{\EA, F} B$.
\end{example}

\begin{example}
We can also get the opposite situation. For this, let $D = \Cexp$ or any ELA-field. Then we take $F \leteq D(a,b)^{\ELA}$, the free ELA-extension on two generators, and take EA-subfields
\[A \leteq \gen{D(a,e^b)}^\EA_F, \qquad  B \leteq \gen{D(e^a,b)}^\EA_F,  \quad \text{and} \quad  C \leteq \gen{D\left(e^{e^a},e^{e^b}\right)}^\EA_F .\]
Then $\gen{C}^\ELA_F = \gen{A}^\ELA_F = \gen{B}^\ELA_F = F$, so we have $A \ind_C^{\ELA, F} B$ trivially.

However, $A \cap B = \gen{D(e^a,e^b)}_F^\EA$ which properly contains $C$, and so $A \nind_C^{\EA, F} B$.
\end{example}

We now prove that $\ind^\ELA$ is an  NSOP$_1$-like and non-simple independence relation on the category  $\ELAFkp$ of ELA-fields together with kernel-preserving embeddings, or more precisely on each connected component, which is obtained by fixing the \ELA-closure of the kernel, and is an AEC with amalgamation, joint embedding, and intersections. It remains to prove \textsc{Club Local Character}, \textsc{Extension}, and \textsc{3-Amalgamation}.

\begin{proposition}\thlabel{prop:ELA Club Local Char}
The relation $\ind^\ELA$ satisfies \textsc{Club Local Character} on each connected component of \ELAFkp. The relevant cardinal is $\lambda = \kappa^+$, where $\kappa$ is cardinality of the kernel in the connected component.
\end{proposition}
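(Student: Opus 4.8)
The plan is to reduce \textsc{Club Local Character} for $\ind^\ELA$ to the finitary local character of $\ind^\td$, with the ELA-closure operator mediating between the two and forcing the cardinal up from $\aleph_0$ to $\kappa^+$. Fix an ELA-field $F$ in the component, a finite $A \subseteq F$, and an arbitrary $B \subseteq F$; write $\kappa = |\ker_F|$, which is infinite since a full kernel is an infinite group. Unwinding \thref{def:ela-independence}, for $B_0 \subseteq B$ we have $A \ind^\ELA_{B_0} B$ exactly when $\gen{A B_0}^\ELA \ind^\td_{\gen{B_0}^\ELA}\gen{B}^\ELA$ (using $\gen{B B_0}^\ELA = \gen B^\ELA$), and since all three fields are algebraically closed this says precisely that $\td(\bar x/\gen B^\ELA) = \td(\bar x/\gen{B_0}^\ELA)$ for every finite tuple $\bar x$ from $\gen{A B_0}^\ELA$.

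The key structural fact I would isolate first is that $\gen{-}^\ELA$ is \emph{finitary modulo the kernel}: because field operations, $\exp$, $\log$, and field-theoretic algebraic closure are all finitary, ELA-closure commutes with unions of chains, so $\gen{A X}^\ELA = \bigcup\{\gen{A X_0}^\ELA : X_0 \subseteq X \text{ finite}\}$, and moreover $|\gen{A X}^\ELA| \leq |A| + |X| + \kappa$. In particular every finite tuple of $\gen{A B_0}^\ELA$ already lies in some $\gen{A \bar s}^\ELA$ with $\bar s$ a finite tuple from $B_0$, and for $|B_0| \leq \kappa$ the closure $\gen{A B_0}^\ELA$ again has size $\leq \kappa$.

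Next I would build a witnessing function. For each finite tuple $\bar s$ from $B$ and each finite tuple $\bar x$ from $\gen{A \bar s}^\ELA$, use finite character of $\td$ together with finitariness of $\gen{-}^\ELA$ to pick a finite $B'_{\bar x} \subseteq B$ such that some tuple from $\gen{B'_{\bar x}}^\ELA$ realises $\td(\bar x/\gen B^\ELA)$; set $h_0(\bar s)$ to be the union of these $B'_{\bar x}$ over the at most $\kappa$-many tuples $\bar x$, and for $S \in [B]^{\le\kappa}$ put $h(S) = S \cup \bigcup\{h_0(\bar s) : \bar s \text{ finite from } S\}$, which again has size $\le\kappa$. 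Let $\calB = \{S \in [B]^{\le\kappa} : h(S) = S\}$. Because $h$ is assembled from a function on finite tuples, $\calB$ is closed under unions of chains of length $<\kappa^+$ (a finite tuple from the union lies in one member of the chain) and is unbounded (iterate $h$ over $\omega$ from any starting set to reach a fixed point), so $\calB$ is a clubset in $[B]^{<\kappa^+}$. Finally, for $S \in \calB$ and any finite $\bar x$ from $\gen{A S}^\ELA$, finitariness puts $\bar x$ in some $\gen{A\bar s}^\ELA$ with $\bar s$ from $S$, whence $B'_{\bar x} \subseteq h(S) = S$ and the chosen witness lies in $\gen{S}^\ELA$; this forces $\td(\bar x/\gen S^\ELA) = \td(\bar x/\gen B^\ELA)$, so $A \ind^\ELA_S B$, giving \textsc{Club Local Character} with $\lambda = \kappa^+$.

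I expect the main obstacle to be verifying that $\calB$ genuinely is a club rather than merely producing a single good $B_0$: this is exactly where the finitary-modulo-kernel character of $\gen{-}^\ELA$ is essential, since it is what lets the finite data in $\gen{A S}^\ELA$ be traced back to finite subsets of $S$ and hence makes $h$-closedness survive unions of chains. A secondary point to check carefully is the cardinality bookkeeping ensuring every relevant set stays of size $\le \kappa$, so that $\calB \subseteq [B]^{<\kappa^+}$; this is where the hypothesis that the kernel has size $\kappa$ pins down the cardinal.
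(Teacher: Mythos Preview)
Your proposal is correct and follows essentially the same approach as the paper: both reduce to local character of $\ind^{\td}$ via the finitary-modulo-kernel nature of $\gen{-}^\ELA$ and an $\omega$-iteration to absorb transcendence-degree witnesses, with the cardinal $\kappa^+$ arising from the size of $\gen{A\bar s}^\ELA$. The only organisational difference is that the paper shows the full set $\{B_0 \in [B]^{<\lambda} : A \ind^\ELA_{B_0} B\}$ is itself club (using \textsc{Base-Monotonicity} of $\ind^{\td}$ directly for closedness, and first iterating inside $\gen{B}^\ELA$ before pulling back to $B$ for unboundedness), whereas you exhibit a particular club subset as the fixed points of your operator $h$, building the pull-back to $B$ into the definition of $B'_{\bar x}$ from the start.
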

\begin{proof}
Let $F$ be an ELA-field, and $A,B\subseteq F$ with $A$ finite. We prove that the set
\[
\calC = \{ B_0 \in [B]^{< \lambda} : A \ind^{\ELA}_{B_0} B \}
\]
is club in $[B]^{< \lambda}$, where $\lambda = |\ker_F|^+$.

\textbf{Closed.} Let $(B_i)_{i < \gamma}$ with $\gamma < \lambda$ be a chain in $\calC$. Set $B_\gamma = \bigcup_{i < \gamma} B_i$. For every $i < \gamma$ we have by assumption that $\langle A B_i \rangle^\ELA \ind_{\langle B_i \rangle^\ELA}^{\td} \gen{B}^\ELA$. So by \textsc{Base-Monotonicity} for $\ind^{\td}$ we have that $\langle A B_i \rangle^\ELA \ind_{\langle B_\gamma \rangle^\ELA}^{\td} \gen{B}^\ELA$ for every $i < \gamma$. Then because $\langle A B_\gamma \rangle^\ELA = \bigcup_{i < \gamma} \langle A B_i \rangle^\ELA$ we can use \textsc{Finite Character} of $\ind^{\td}$ to conclude that $\langle A B_\gamma \rangle^\ELA \ind_{\langle B_\gamma \rangle^\ELA}^{\td} \gen{B}^\ELA$ and so indeed $A \ind^{\ELA}_{B_\gamma} B$.

\textbf{Unbounded.} Let $D \in [B]^{< \lambda}$. Then by \textsc{Local Character} and \textsc{Base-Monotonicity} of $\ind^{\td}$ there is $B_0 \subseteq \gen{B}^\ELA$ with $|B_0| < \lambda$ such that $D \subseteq B_0$ and $A \ind_{B_0}^{\td} B$. Since $|A B_0| < \lambda$ and $\lambda = |\ker_F|^+$ we have that $|\langle A B_0 \rangle^\ELA| < \lambda$.

Then by \textsc{Local Character} for $\ind^{\td}$ (or rather, by a standard consequence), there is $B_1 \subseteq \gen{B}^\ELA$ with $|B_1| < \lambda$ such that $B_0 \subseteq B_1$ and $\langle A B_0 \rangle^\ELA \ind_{B_1}^{\td} \gen{B}^\ELA$. Repeating this process we obtain a chain $(B_i)_{i < \omega}$ of subsets of $\gen{B}^\ELA$, each of cardinality $< \lambda$, such that $\langle A B_i \rangle^\ELA \ind_{B_{i+1}}^{\td} \gen{B}^\ELA$ for all $i < \omega$. Set $B_\omega = \bigcup_{i < \omega} B_i$. By \textsc{Base-Monotonicity} for $\ind^{\td}$ we have $\langle A B_i \rangle^\ELA \ind_{\langle B_\omega\rangle^\ELA}^{\td} \gen{B}^\ELA$ for all $i < \omega$. So because $\langle A B_\omega \rangle^\ELA = \bigcup_{i < \omega} \langle A B_i \rangle^\ELA$ we can use \textsc{Finite Character} for $\ind^{\td}$ to obtain $\langle A B_\omega \rangle^\ELA \ind_{\langle B_\omega \rangle^\ELA}^{\td} \gen{B}^\ELA$.

For every $c \in \langle B_\omega \rangle^\ELA$ there is some finite tuple $b_c \in B$ such that $c \in \langle b_c \rangle^\ELA$. Set $B_\omega' = D \cup \bigcup \{ b_c : c \in \langle B_\omega \rangle^\ELA \}$. Then $|B_\omega'| < \lambda$ because $|\langle B_\omega \rangle^\ELA| < \lambda$. By construction we have $D \subseteq B_\omega' \subseteq B$ while also $\langle B_\omega' \rangle^\ELA = \langle B_\omega \rangle^\ELA$. So $\langle A B_\omega' \rangle^\ELA \ind_{\langle B_\omega' \rangle^\ELA}^{\td} \gen{B}^\ELA$ and thus $A \ind^{\ELA}_{B_\omega'} B$. We conclude that $B_\omega' \in \calC$, so $\calC$ is indeed unbounded in $[B]^{< \lambda}$.
\end{proof}

This proof strategy does not seem to yield anything better than $\lambda = \kappa^+$. However, we have not proved that this is optimal, and indeed our initial guess was that one might be able to take $\lambda = \aleph_0$ for any kernel. This remains open.

\begin{proposition}\thlabel{ELA extension}
The relation $\ind^\ELA$ satisfies \textsc{Extension} on \ELAFkp.
\end{proposition}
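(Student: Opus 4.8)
The plan is to verify \textsc{Extension} by a single free amalgamation. Suppose $A \ind_C^{\ELA,F} B$ in an ELA-field $F$ and $B \subseteq B' \subseteq F$. Unravelling \thref{def:ela-independence}, the hypothesis says $\gen{AC}^\ELA \ind_{\gen{C}^\ELA}^{\td} \gen{BC}^\ELA$, and we must produce a kernel-preserving extension $F \into N$ in $\ELAFkp$ together with $A' \subseteq N$ such that $\gtp(A'/BC;N) = \gtp(A/BC;F)$ and $\gen{A'C}^\ELA \ind_{\gen{C}^\ELA}^{\td} \gen{B'C}^\ELA$. The idea is to glue to $F$ a fresh copy of $P \leteq \gen{ABC}^\ELA_F$ along the common ELA-field $\gen{BC}^\ELA$, arranging the copy $A'$ of $A$ to be field-theoretically independent from all of $F$ over $\gen{BC}^\ELA$. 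Since everything is phrased through ELA-closures, the construction is size-agnostic and covers infinite $A$.

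First I would form the amalgam. The base $\gen{BC}^\ELA$ is an ELA-field, so \thref{cor:independent-ela-amalgamation} applies to the span $F \leftarrow \gen{BC}^\ELA \rightarrow P$ in $\ExpFkp$: it yields a kernel-preserving amalgam which may be taken to be an ELA-field $N \in \ELAFkp$ in which, writing $P'$ for the image of $P$ and $A'$ for the image of $A$, we have $F \ind_{\gen{BC}^\ELA}^{\td} P'$. Because all inclusions are kernel-preserving, ELA-closures are computed consistently in $N$, in $F$, and in $P'$; in particular $\gen{A'BC}^\ELA_N = P'$ and $\gen{B'C}^\ELA_N = \gen{B'C}^\ELA_F \subseteq F$, while $\gen{A'C}^\ELA_N \subseteq P'$.

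For type preservation, the amalgamation isomorphism $P \iso P'$ is the identity on $\gen{BC}^\ELA$ and sends $A$ to $A'$. Since $\gen{ABC}^\ELA_F = P$ and $\gen{A'BC}^\ELA_N = P'$ are exactly the objects $[BCA]$ and $[BCA']$ of \thref{Galois types from intersections}, this gives $\gtp(A'/BC;N) = \gtp(A/BC;F)$.

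The main point, and the step I expect to need the most care, is the base of the independence: the amalgam only delivers independence over $\gen{BC}^\ELA$, whereas \textsc{Extension} demands it over $\gen{C}^\ELA$. I would bridge the gap with \textsc{Transitivity} of $\ind^{\td}$. On one hand, \textsc{Monotonicity} applied to $F \ind_{\gen{BC}^\ELA}^{\td} P'$ gives $\gen{A'C}^\ELA \ind_{\gen{BC}^\ELA}^{\td} \gen{B'C}^\ELA$. On the other hand, transporting the hypothesis $\gen{AC}^\ELA \ind_{\gen{C}^\ELA}^{\td} \gen{BC}^\ELA$ through the isomorphism $P \iso P'$, which fixes $\gen{BC}^\ELA$ and hence $\gen{C}^\ELA$, yields $\gen{A'C}^\ELA \ind_{\gen{C}^\ELA}^{\td} \gen{BC}^\ELA$. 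As $\gen{C}^\ELA \subseteq \gen{BC}^\ELA$, \textsc{Transitivity} combines these two into $\gen{A'C}^\ELA \ind_{\gen{C}^\ELA}^{\td} \gen{B'C}^\ELA$, which is precisely $A' \ind_C^{\ELA,N} B'$, completing the verification.
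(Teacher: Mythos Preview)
Your proof is correct and follows essentially the same strategy as the paper: free amalgamation over $\gen{BC}^\ELA$ to produce an independent copy, \thref{Galois types from intersections} for type preservation, and \textsc{Transitivity} to descend the base from $\gen{BC}^\ELA$ to $\gen{C}^\ELA$. The only notable difference is that the paper carries this out in two amalgamation steps (first amalgamating $\gen{Ba}^\ELA$ with $\gen{B'}^\ELA$ over $\gen{BC}^\ELA$ to form $M$, then amalgamating $F$ with $M$ over $\gen{B'}^\ELA$), whereas you do it in one step by amalgamating $F$ directly with $P=\gen{ABC}^\ELA$ over $\gen{BC}^\ELA$; since $\gen{B'C}^\ELA$ already sits inside $F$, your single amalgam already delivers the needed independence, making the argument slightly more economical. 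The paper also phrases the final transitivity at the level of $\ind^\ELA$ rather than $\ind^{\td}$, but this is cosmetic.
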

\begin{proof}
Let $F$ be an ELA-field, let $C,B \subseteq F$, let $a$ be a possibly infinite tuple in $F$ such that $a \ind^{\ELA, F}_C B$ and let $B \subseteq D \subseteq F$. We have to produce $a'$ in some extension $N$ of $F$ such that $a' \ind_C^{\ELA, N} D$ and $\gtp(a/BC) = \gtp(a'/BC)$.

We may assume $C = \gen{C}_F^\ELA$, $B = \gen{BC}_F^\ELA$, $D = \gen{D}_F^\ELA$ and that $a$ enumerates $\gen{C a}_F^\ELA$. Let $A = \gen{B a}_F^\ELA$.

As subsets of $F$, it may be that $A$ and $D$ are not independent from each other over $B$. However, we can also regard them as extensions of $B$ and let $M$ be their free amalgam, shown by the dashed arrows in the diagram below. We now have both $F$ and $M$ as extensions of $D$, and we let $N$ be their free amalgamation, yielding the dotted arrows in the diagram below.
\[
\begin{tikzcd}
a \arrow[r]     & A \arrow[r] \arrow[rrd, dashed] & F \arrow[r, dotted]            & N                    \\
C \arrow[u] \arrow[r] & B \arrow[u] \arrow[r]           & D \arrow[u] \arrow[r, dashed] & M \arrow[u, dotted]
\end{tikzcd}
\]
We can then regard the embedding of $F$ into $N$ as an inclusion. We let $a'$ and $A'$ be the image of $a$ and $A$ in $N$, when factored through $M$. Then $A \iso A'$ with an isomorphism fixing $B$ pointwise and sending $a$ to $a'$, and so by \thref{Galois types from intersections} we have $\gtp(a'/B) = \gtp(a/B)$, which, as $C \subseteq B$, is what we needed. 

Since $M$ is the free amalgam of $A$ and $D$ over $B$, we have $\A' \ind_B^{\ELA, M} D$. Then by \textsc{Invariance} we have $A' \ind_B^{\ELA, N} D$ and by \textsc{Monotonicity} we have $a' \ind_B^{\ELA, N} D$. Also, since $a \ind^{\ELA, F}_C B$, by the above equality of Galois types and \textsc{Invariance} we have $a' \ind_C^{\ELA, N} B$. So, by \textsc{Transitivity} we find $a' \ind_C^{\ELA, N} D$, as required.
\end{proof}

\begin{proposition}\thlabel{ELA 3-amalg}
The relation $\ind^\ELA$ satisfies \textsc{3-Amalgamation} on \ELAFkp.
\end{proposition}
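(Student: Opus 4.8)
The plan is to reduce the statement to a purely field-theoretic amalgamation, carry that out using stability of transcendence independence, and then glue the three exponential maps.

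First I would unwind the definition of $\ind^\ELA$ in both the hypotheses and the conclusion. Since $M, M_1, M_2, M_3$ are all ELA-fields sharing the kernel-preserving subfield $M$, we have $\gen{M_i}^\ELA = M_i$ and $\gen{M}^\ELA = M$ inside any kernel-preserving extension. Hence the three hypotheses $M_1 \ind^{M_{12}}_M M_2$, $M_2 \ind^{M_{23}}_M M_3$ and $M_3 \ind^{M_{13}}_M M_1$ say precisely that the underlying fields satisfy the pairwise transcendence independences $M_1 \ind^{\td}_M M_2$, $M_2 \ind^{\td}_M M_3$ and $M_3 \ind^{\td}_M M_1$. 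Likewise, once $N$ is produced as an ELA-field with the same kernel, the desired conclusion $M_1 \ind^N_M M_{23}$ reduces to the single field-theoretic statement $M_1 \ind^{\td}_M M_{23}$; as transcendence independence is preserved under field extensions, it suffices to secure this in a field amalgam.

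Next I would amalgamate the underlying fields. The relation $\ind^{\td}$ is a stable independence relation on the category of fields (recalled in the examples following \thref{def:independence-relations}), so by \thref{indep-properties-remark}(2) it satisfies \textsc{3-amalgamation}. Applying this to the underlying fields of $M_{12}, M_{13}, M_{23}$ yields an algebraically closed field $N_0$ with commuting field embeddings of the three pairwise amalgams in which $M_1 \ind^{\td}_M M_{23}$. Using \textsc{Stationarity} of $\ind^{\td}$ (algebraic closures of transcendence-independent sets meet in the base) I would construct this amalgam so that, inside $N_0$, the pairwise intersections are exactly $M_{12} \cap M_{13} = M_1$, $M_{12} \cap M_{23} = M_2$ and $M_{13} \cap M_{23} = M_3$, with triple intersection $M$. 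It then remains to put an exponential structure on $N_0$: each $M_{ij}$ is an ELA-field, so its exponential map is total, and by the intersection control the three maps $\exp_{M_{12}}, \exp_{M_{13}}, \exp_{M_{23}}$ agree on all common domains (each restricting to $\exp_{M_i}$ on $M_i$ and to $\exp_M$ on $M$). Exactly as in the amalgamation argument of \thref{prop:amalgam-base}, I would extend their union by additivity to a homomorphism on the $\Q$-linear span $M_{12} + M_{13} + M_{23}$, obtaining a partial E-field structure on the field it generates, check that this introduces no new kernel elements, and finally take a free ELA-extension as in \thref{free ELA construction}, which does not extend the kernel, to land in $\ELAFkp$ with the required commuting embeddings. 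Since $M_1 \ind^{\td}_M M_{23}$ already holds in $N_0 \subseteq N$, the definition gives $M_1 \ind^{\ELA,N}_M M_{23}$.

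The main obstacle is the well-definedness of the glued exponential map. Additivity combines two maps that agree on their intersection, but for three subspaces one must rule out inconsistencies on the higher overlaps, for instance verifying $(M_{12}+M_{13}) \cap M_{23} = M_2 + M_3$, where the two candidate values of $\exp$ coincide. Securing these intersection identities is exactly where the stability of $\ind^{\td}$ and the carefully chosen independent amalgamation of the three fields are essential; the remaining checks (the group-homomorphism property, the absence of new kernel elements, and preservation of the Galois-type data making all embeddings commute) are then routine transcendence-degree and predimension bookkeeping.
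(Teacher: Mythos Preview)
Your outline matches the paper's proof almost exactly: amalgamate the underlying algebraically closed fields independently, glue the three exponentials on $M_{12}+M_{13}+M_{23}$, check kernel-preservation, and pass to a free ELA-closure. The reduction of the hypotheses and conclusion to $\ind^{\td}$ is also correct.

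Where your proposal falls short is in the assessment of difficulty. You flag well-definedness of the glued map as the main obstacle and then dismiss kernel-preservation as ``routine transcendence-degree and predimension bookkeeping.'' This is backwards. First, there is no predimension in sight: that belongs to $\ind^\strong$, not $\ind^\ELA$. Second, kernel-preservation is the heart of the argument and is not routine. Suppose $\exp_{12}(a_{12})\exp_{13}(a_{13})\exp_{23}(a_{23})=1$; you must show $a_{12}+a_{13}+a_{23}\in K$. The element $\exp_{12}(a_{12})$ lies in $M_{12}$ and equals $\exp_{13}(-a_{13})\exp_{23}(-a_{23})$, which is only in the subfield generated by $M_{13}\cup M_{23}$, not in any pairwise intersection you have controlled. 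The paper invokes Shelah's lemma on stable independent systems of algebraically closed fields to conclude that this element can be written as $c_1^{-1}c_2^{-1}$ with $c_i\in M_i$; only then does ELA-ness of the $M_i$ let you take logarithms and reduce to kernel elements step by step. Your pairwise intersection identities such as $M_{12}\cap M_{13}=M_1$ do not by themselves give this multiplicative decomposition, and the additive identity $(M_{12}+M_{13})\cap M_{23}=M_2+M_3$ you mention is itself a consequence of the same stable-system lemma, not something immediate from the amalgamation. So the missing ingredient in your sketch is an explicit appeal to Shelah's lemma (or an equivalent statement about intersections in independent systems of algebraically closed fields), without which neither the well-definedness check nor the kernel check goes through.
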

The proof is similar to the case of amalgamating independent systems of EA-fields and arbitrary embeddings, which was done in \cite[Theorem 5.4]{haykazyan_existentially_2021}. We will just consider 3-amalgamation, but with somewhat more complicated notation, and an inductive argument, one can also show that \ELAFkp\ has independent $n$-amalgamation for all $n \geq 3$.

\begin{proof}
Suppose we are given a commuting diagram consisting of the solid arrows below, such that $F_i \ind^{\ELA, F_{ij}}_F F_j$ for all $1 \leq i < j \leq 3$.
\[
\begin{tikzcd}[row sep=tiny]
                                   & F_{13} \arrow[rr, dashed] &                           & F'                        \\
F_1 \arrow[ru] \arrow[rr]          &                           & F_{12} \arrow[ru, dashed] &                           \\
                                   & F_3 \arrow[uu] \arrow[rr] &                           & F_{23} \arrow[uu, dashed] \\
F \arrow[ru] \arrow[rr] \arrow[uu] &                           & F_2 \arrow[uu] \arrow[ru] &                          
\end{tikzcd}
\]
We will construct $F'$ with the dashed arrows such that the entire diagram commutes, and such that $F_1 \ind^{\ELA, F'}_F F_{23}$. We will in fact additionally get $F_2 \ind^{\ELA, F'}_F F_{13}$ and $F_3 \ind^{\ELA, F'}_F F_{12}$ from the symmetry of the construction. To distinguish between the exponential maps on these fields, we will use subscripts and write, say $\exp_1$ or $\exp_{12}$, with $\exp'$ for the map on $F'$.

First, we can amalgamate the system just as algebraically closed fields, to get an algebraically closed field $F''$ and embeddings into it such that $F_1 \ind_F^{\td,F''} F_{23}$.

As in the proof of \cite[Theorem 5.4]{haykazyan_existentially_2021}, the map $\exp_{12}\cup \exp_{23} \cup \exp_{31}$ extends to a homomorphism $\exp''$ from $F_{12} + F_{13} + F_{23}$ to $(F'')^\times$, making $F''$ into a partial E-field.

We must show that there are no new kernel elements in $F_{12} + F_{13} + F_{23}$. Let $a_{12} \in F_{12}, a_{13} \in F_{13}, a_{23} \in F_{23}$ such that $\exp_{12}(a_{12}) \exp_{13}(a_{13}) \exp_{23}(a_{23}) = 1$. Write $K = \ker_F$ for the kernel of the ELA-fields in the original system, so we need to show that $a_{12} + a_{13} + a_{23} \in K$.

Using a lemma of Shelah on stable systems of models (in this case algebraically closed fields) \cite[Fact XII.2.5]{shelah_classification_1990}, also quoted as \cite[Fact~5.3]{haykazyan_existentially_2021}, we can find $c_1 \in F_1$ and $c_2 \in F_2$ such that $\exp_{12}(a_{12}) c_1 c_2 = 1$. As $F_1$ and $F_2$ are ELA-fields there are $b_1 \in F_1$ and $b_2 \in F_2$ such that $\exp_1(b_1) = c_1$ and $\exp_2(b_2) = c_2$. Hence we have $\exp_{12}(a_{12} + b_1 + b_2) = 1$ and so $a_{12} + b_1 + b_2 \in K$.

We also have $\exp_{13}(b_1) \exp_{23}(b_2) = \exp_{12}(a_{12})^{-1} = \exp_{13}(a_{13}) \exp_{23}(a_{23})$, so $\exp_{13}(a_{13} - b_1) \exp_{23}(a_{23} - b_2) = 1$. Thus we have that $\exp_{13}(a_{13} - b_1) = \exp_{23}(-(a_{23} - b_2)) \in F_{13} \cap F_{23} = F_3$. As $F_3$ is an ELA-field there is $d \in F_3$ with $\exp_3(d) = \exp_{13}(a_{13} - b_1) = \exp_{23}(-(a_{23} - b_2))$. Therefore $a_{13} - b_1 - d \in K$ and $a_{23} - b_2 + d \in K$. Since $K$ is an abelian group we get that their sum $a_{13} + a_{23} - (b_1 + b_2)$ is in $K$. Combining with $a_{12} + b_1 + b_2 \in K$ from before, we conclude that indeed $a_{12} + a_{13} + a_{23} \in K$. Hence the embeddings of the $F_{ij}$ into $F''$ are kernel-preserving.

Now we set $F' \leteq (F'')^\ELA$ to complete the system with an ELA-field. This free extension is also kernel-preserving. The system is independent with respect to $\ind^{\td}$ and each node is an ELA-subfield (with the same kernel), hence it is an $\ind^\ELA$-independent system as required.
\end{proof}

That completes the proof of \thref{ELA-indep is NSOP1}.

\section{Strong independence}
Recall that for an ELA-field $F$ and $A \subseteq F$ we write $\hull{A}_F^{\ELA}$, or just $\hull{A}^\ELA$, for the smallest strong ELA-subfield of $F$ containing $A \cup \ker_F$ and, if $F$ has very full kernel, the isomorphism type of $\hull{A}^{\ELA}_F$ does not depend on $F$ beyond the isomorphism type of $\gen{\hull{A}}_F$.

\begin{definition}
\thlabel{def:strong-independence}
Let $F$ be an ELA-field and $A, B, C \subseteq F$. We say that \emph{$A$ is strongly independent from $B$ over $C$ in $F$}, and write $A \ind^{\strong, F}_C B$, if
\begin{enumerate}[label=(STR\arabic*), widest={(STRX)}]
\item $\hull{AC}^\ELA \ind^{\td}_{\hull{C}^\ELA} \hull{BC}^\ELA$, and
\item $\hull{AC}^\ELA \cup \hull{BC}^\ELA \strong F$. 
\end{enumerate}
\end{definition}

We now show that this strong independence is related to free amalgamation and give an equivalent definition which is easier to check. 

\begin{proposition}\thlabel{strong indep equiv form}
Let $F$ be an ELA-field, let $A,B,C \subseteq F$, and for notational convenience assume that $C=\hull{C}^\ELA$, that $C \subseteq A \cap B$, and that $A = \hull{A}$ and $B=\hull{B}$.

Then $A \ind^{\strong, F}_C B$ if and only if 
\begin{enumerate}[label=(STR\arabic*$'$)]
\item $A,\exp(A) \ind^{\td}_C B,\exp(B)$, and
\item $A \cup B \strong F$. 
\end{enumerate}

Equivalently, $F$ is a strong extension of the free amalgam of $\gen{A}$ and $\gen{B}$ over $C$, or equivalently again, $\hull{AB}_F^\ELA$ is isomorphic to that free amalgam.
\end{proposition}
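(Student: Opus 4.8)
The plan is to establish the equivalences between conditions (STR1)--(STR2) of \thref{def:strong-independence}, the conditions (STR1$'$)--(STR2$'$), and the free-amalgam reformulation. I would begin by recording the simplifications forced by the standing hypotheses. Since $C = \hull{C}^\ELA$, $C \subseteq A \cap B$, $A = \hull{A}$ and $B = \hull{B}$, we have $\hull{AC}^\ELA = \gen{A}^\ELA$, $\hull{BC}^\ELA = \gen{B}^\ELA$ and $\hull{C}^\ELA = C$; hence (STR1) becomes $\gen{A}^\ELA \ind^{\td}_C \gen{B}^\ELA$ and (STR2) becomes $\gen{A}^\ELA \cup \gen{B}^\ELA \strong F$. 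Moreover, because $A = \hull{A}$ is strong and contains $\ker_F$, the partial E-subfield $\gen{A}$ is strong in $F$, so by \thref{strong extensions are nearly free} its ELA-closure $\gen{A}^\ELA$ is strong in $F$ and is a free ELA-extension of $\gen{A}$, obtained as a chain of one-step free extensions; likewise for $B$. Along such a chain each step has relative predimension $0$, so by the addition formula for $\Delta$ together with strongness of $A$ we get $\Delta(x/A) = 0$ for every finite tuple $x$ from $D(\gen{A}^\ELA)$, and symmetrically for $B$.

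The engine of the argument is a single predimension computation. Using the addition formula $\Delta(pq/Z) = \Delta(p/Zq) + \Delta(q/Z)$ (as in the proof of \thref{hull finite char}), if $U \strong F$ and $a'$ is a one-step free extension element over $U$, so $\Delta(a'/U) = 0$, then for every finite tuple $b$ we have $\Delta(b/Ua') = \Delta(a'b/U) - \Delta(a'/U) = \Delta(a'b/U) \geq 0$; thus $Ua' \strong F$, and the reverse transfer ($Ua' \strong F \Rightarrow U \strong F$) follows similarly using monotonicity of $\Delta$ in its base. So strongness passes freely in both directions along $\Delta = 0$ extensions. The decisive point is that strongness forces genericity: if $\gen{P}$ and $\gen{Q}$ are the two sides built so far, $\linspan(P \cup Q) \strong F$, and $a'$ is a one-step free extension on the $P$-side, then $\Delta(a'/\linspan(P \cup Q)) \geq 0$ together with $\ldim_\Q(a'/\linspan(P \cup Q)) = 1$ forces the unique new transcendental attached to $a'$ (namely $e^{a'}$ when $a'$ is algebraic over $\gen{P}$, or $a'$ itself when it is transcendental) to stay transcendental over the compositum of $\gen{P}$ and $\gen{Q}$. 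Hence the step remains $\Delta = 0$ over the whole union and the $\td$-independence of the two sides over $C$ is preserved.

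With this engine, I would prove (STR1$'$)+(STR2$'$) $\Rightarrow$ (STR1)+(STR2) by induction along the free ELA-extensions producing $\gen{A}^\ELA$ and $\gen{B}^\ELA$, carrying the two invariants that the current sides are $\td$-independent over $C$ and that their union is strong in $F$; the base case is exactly (STR1$'$) and (STR2$'$), and each inductive step is the mechanism above. The converse (STR1)+(STR2) $\Rightarrow$ (STR1$'$)+(STR2$'$) is easier: (STR1$'$) follows from $\gen{A}^\ELA \ind^{\td}_C \gen{B}^\ELA$ by \textsc{Monotonicity} of $\ind^{\td}$, while (STR2$'$) follows by the reverse strongness transfer along the $\Delta = 0$ extension of $\linspan(A \cup B)$ up to $\linspan(D(\gen{A}^\ELA) \cup D(\gen{B}^\ELA))$, whose $\Delta = 0$ character is guaranteed by the independence in (STR1). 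For the free-amalgam reformulation, note that (STR2$'$) says $\linspan(A \cup B) = D(\gen{AB}_F) \strong F$, so $\gen{AB}_F$ is strong and, by \thref{strong extensions are nearly free}, $\gen{AB}^\ELA_F$ is a strong free ELA-extension of $\gen{AB}_F$; combined with the $\td$-independence (STR1$'$), this is exactly the assertion that $\hull{AB}^\ELA_F = \gen{AB}^\ELA_F$ is a free amalgam of $\gen{A}$ and $\gen{B}$ over $C$ and that $F$ is a strong extension of it. The reverse direction unwinds the definition of free amalgam and applies the reverse strongness transfer once more.

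The hard part will be the genericity mechanism of the second paragraph: proving that strongness of the union propagates $\td$-independence through each free ELA-step is where the two conditions (STR1) and (STR2) genuinely interact and cannot be treated separately, and it also requires the bookkeeping that keeps the two growing sides linearly independent over $C$ (so that $\ldim_\Q(a'/\linspan(P \cup Q)) = 1$ at each step). A secondary point, which is what lets the proof avoid any hypothesis of very full kernel, is that I would never invoke uniqueness of free ELA-extensions (\thref{uniqueness of free extensions}): every claim is obtained by directly verifying the definition of free amalgam or by the predimension bookkeeping above, rather than by producing an isomorphism to an externally built amalgam. Finally one should check that $\gen{A} \cap \gen{B} = C$, so that the free amalgam is genuinely over $C$; this follows from (STR1$'$) together with the hypothesis $C \subseteq A \cap B$.
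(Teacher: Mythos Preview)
Your proposal is correct and follows essentially the same approach as the paper: both proofs reduce to inducting along the one-step free extensions building $\gen{A}^\ELA$ and $\gen{B}^\ELA$, with the core mechanism being that a one-step free extension of $\gen{P}$ stays one-step free over $\gen{P\cup Q}$ provided the two sides are suitably independent and the union is strong.

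The only organizational difference is this. The paper treats the two directions asymmetrically: for (STR1)+(STR2) $\Rightarrow$ (STR2$'$) it uses (STR1) to show the chain $\gen{A\cup B}\hookrightarrow\gen{\hull{A}^\ELA\cup\hull{B}^\ELA}$ is free (hence strong) and composes with (STR2); for (STR1$'$)+(STR2$'$) $\Rightarrow$ (STR1) it instead starts from (STR2$'$), invokes \thref{strong extensions are nearly free} to get that $\gen{A\cup B}\hookrightarrow\hull{AB}^\ELA$ is free, factors this chain through $\gen{\hull{A}^\ELA\cup B}$, and then inducts to recover the $\ind^{\td}$ statement. You instead isolate a single ``engine'' (strongness of the union forces genericity of the next free step, which in turn propagates both strongness and $\ind^{\td}$-independence) and run the same induction symmetrically in both directions, carrying the two invariants simultaneously. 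This is a cosmetic repackaging rather than a different argument; your version has the virtue of making explicit the step the paper leaves implicit, namely that the factorization of the free chain through $\gen{\hull{A}^\ELA\cup B}$ is legitimate precisely because each one-step extension on the $A$-side remains generic over $B$. Your remark that $\ldim_\Q(a'/\linspan(P\cup Q))=1$ needs checking is well taken; it follows from the running $\ind^{\td}$-independence together with $C$ being an algebraically closed ELA-field with the full kernel, exactly the bookkeeping you flag.
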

\begin{proof}
Suppose conditions (STR1) and (STR2) hold. Then (STR1$'$) holds by {\sc Monotonicity} (and {\sc Symmetry}) for $\ind^{\td}$.

Since $A, B \strong F$, the extensions $\gen{A} \into \hull{A}^\ELA$ and $\gen{B} \into \hull{B}^\ELA$ are free by \thref{strong extensions are nearly free}, so there are $\Q$-linear bases $(a_i)_{i<\alpha}$ of $\hull{A}^\ELA$ over $A$ and $(b_i)_{i<\beta}$ of $\hull{B}^\ELA$ over $B$ generating the chains of one-step free extensions. It follows from (STR1) that $(a_i)_{i<\alpha}$ also generates a chain of one-step free extensions of $\gen{A \cup B}$, and then that $(b_i)_{i<\beta}$ generates a chain of one-step free extensions of $\hull{A}^\ELA \cup \gen{B}$. So the extensions 
\[
\gen{A \cup B} \into \gen{\hull{A}^\ELA \cup B} \into \gen{\hull{A}^\ELA \cup \hull{B}^\ELA}
\]
are free, and hence strong. Combining with (STR2), we see that $A \cup B \strong F$, so (STR2$'$). 

Conversely, suppose (STR1$'$) and (STR2$'$) hold. From (STR2$'$) and \thref{strong extensions are nearly free}, the extension $\gen{A \cup B} \into \hull{AB}^\ELA$ is free. We can choose a chain of one-step free extensions which goes via $\gen{\hull{A}^\ELA \cup B}$, and then starting with (STR1$'$) one can prove inductively on these one-step extensions that $\hull{A}^\ELA \ind^{\td}_C B$, and then that $\hull{A}^\ELA \ind^{\td}_C \hull{B}^\ELA$, which gives (STR1). Likewise (STR2) can be proved by induction on the one-step free extensions.

It follows that conditions (STR1$'$) and (STR2$'$) are equivalent to $ \hull{AB}_F^\ELA$ being the free amalgam of $A$ and $B$ over $C$.
\end{proof}

We now verify that $\ind^\strong$ satisfies the various properties of a stable independence relation, under appropriate hypotheses.
\begin{proposition}
\thlabel{prop:strong-independence-basic-properties}
Let $F$ be any ELA-field. Then $\ind^\strong$ satisfies the six basic properties of an independence relation on $F$, and \textsc{Base-Monotonicity}.
\end{proposition}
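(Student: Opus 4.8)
The plan is to verify each clause of \thref{def:strong-independence} separately: the transcendence clause (STR1) will be reduced to the corresponding property of $\ind^{\td}$, while the strongness clause (STR2) will be handled by predimension arguments. Throughout I abbreviate $\hull{X}^\ELA_F$ by $\hat X$ and use freely that each such hull is a strong ELA-subfield of $F$ containing $\ker_F$ (\thref{hull finite char}, \thref{strong extensions are nearly free}); that $\ind^{\td}$ satisfies all six basic properties together with \textsc{Base-Monotonicity}; that strong embeddings are closed under composition and satisfy coherence; that the ELA-closure of a strong set is strong (\thref{strong extensions are nearly free}); and that $\hull{\cdot}$ is unchanged on passing to a strong extension. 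The recurring reduction will be: if $S \subseteq T$ with $T \strong F$, then by coherence and transitivity of strong embeddings together with hull-invariance, $S \strong F$ is equivalent to $S \strong T$, i.e.\ to $S$ contributing nothing new to its hull computed inside $T$.

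First I would dispose of the easy cases. \textsc{Existence} holds because $\hat{C} \subseteq \hat{AC}$, so (STR1) is \textsc{Existence} for $\ind^\td$ and (STR2) is just $\hat{AC} \strong F$, which is automatic. \textsc{Normality} is immediate since $\hull{(BC)C}^\ELA = \hat{BC}$, making the two instances of the definition literally identical. \textsc{Symmetry} follows because (STR1) is symmetric by \textsc{Symmetry} of $\ind^\td$ and (STR2) is visibly symmetric in $A$ and $B$. For \textsc{Finite Character}, (STR1) follows from finite character of the hull operator (\thref{hull finite char}(2)) together with \textsc{Finite Character} of $\ind^\td$, writing $\hat{AC} = \bigcup_{A_0 \subseteq_{\mathrm{fin}} A}\hull{A_0 C}^\ELA$; and (STR2) holds because $\{\hull{A_0C}^\ELA \cup \hat{BC}\}_{A_0}$ is a directed family of strong subsets whose union is $\hat{AC}\cup\hat{BC}$, and a directed union of strong subsets is strong.

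The substantive work is in \textsc{Monotonicity}, \textsc{Transitivity}, and \textsc{Base-Monotonicity}. For each, the (STR1) clause follows from the matching property (together with \textsc{Monotonicity} and \textsc{Base-Monotonicity}) of $\ind^\td$, after routine bookkeeping to account for the possibly different hulls appearing on each side; this bookkeeping uses only that passing to an ELA-closure over an ELA-field base preserves $\ind^\td$-independence. The content is in the (STR2) clauses, and here the reduction above applies in each case: for \textsc{Monotonicity} the set $\hat{AC}\cup\hat{DC}$ sits inside the strong set $\hat{AC}\cup\hat{BC}$; for \textsc{Transitivity} the set $\hat{AC}\cup\hat{BC}$ sits inside $\hat{AD}\cup\hat{BD}$; and for \textsc{Base-Monotonicity} the set $\hat{AD}\cup\hat{B}$ sits inside $\gen{\hat{AC}\cup\hat B}^\ELA_F$, which is strong in $F$ by \thref{strong extensions are nearly free} because $\hat{AC}\cup\hat B$ is. In every case we are thus reduced to showing that a given subset $S$ is strong inside an ambient strong set $T$.

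The main obstacle is precisely this last computation. I would carry it out by recognising the ambient $T$ as (a strong ELA-extension of) a free amalgam: via \thref{strong indep equiv form} the relevant $T$ is the free amalgam of the two hulls over their common base, and then the additive predimension calculation behind \thref{strong amalg lemma} applies. Over an ELA base, the $\ind^\td$-freeness recorded in (STR1) forces $\Delta$ to split as the sum of the contributions of the two sides, so that strongness of $S$ inside $T$ follows from strongness of each of its two pieces (which hold by coherence) together with the transcendence-degree independence. \textsc{Base-Monotonicity} is the most delicate, since there the candidate set grows rather than shrinks, so one first absorbs it into the strong ELA-closure $\gen{\hat{AC}\cup\hat B}^\ELA_F$ before making the reduction. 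I expect these predimension computations, rather than any of the reductions, to be the only genuine content of the proof.
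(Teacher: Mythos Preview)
Your proposal is correct and follows essentially the same strategy as the paper: reduce (STR1) to properties of $\ind^{\td}$, and handle (STR2) via the free-amalgam predimension calculation behind \thref{strong amalg lemma}. The paper's organisation differs only in packaging: for \textsc{Monotonicity} and \textsc{Base-Monotonicity} it switches to the $(\mathrm{STR1}',\mathrm{STR2}')$ reformulation of \thref{strong indep equiv form}, and for \textsc{Transitivity} it verifies (STR2) by the direct identity $\gen{\hull{AC}\cup\hull{BC}}^\ELA=\hull{ABD}^\ELA$ rather than an explicit predimension computation.

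One caveat on your clean separation of the two clauses: for \textsc{Base-Monotonicity} the verification of (STR1), namely $\hat{AD}\ind^{\td}_{\hat D}\hat B$, is \emph{not} pure $\ind^{\td}$-bookkeeping, because $\hat{AD}$ can be much larger than $\hat{AC}\cup\hat D$ unless one first knows $\hat{AC}\cup\hat D\strong F$ (so that $\hat{AD}$ is just its free ELA-closure, to which your ``passing to an ELA-closure preserves $\ind^{\td}$'' remark then legitimately applies). In other words the (STR2)-type strongness argument must feed back into (STR1). The paper deals with exactly this by first establishing in the \textsc{Monotonicity} proof that $\gen{AD}\strong F$, whence $\hull{AD}=\linspan(AD)$, and then checking only $(\mathrm{STR1}')$ --- which involves $\hull{AD}$ rather than $\hull{AD}^\ELA$ and therefore sidesteps the issue. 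Your route works too, but be aware that the ``routine bookkeeping'' for (STR1) already consumes part of the (STR2) argument.
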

\begin{proof}
We get \textsc{Normality}, \textsc{Existence}, \textsc{Symmetry}, and \textsc{Finite Character} directly from the definition and the corresponding properties of algebraic independence and $\hull{-}^\ELA$-closure.

For \textsc{Transitivity}, assume $A \ind^\strong_C D$ and $A \ind^\strong_D B$ with $C \subseteq D$. Condition (STR1) holds by \textsc{Transitivity} for algebraic independence. Condition (STR2) follows from a direct calculation:
\[
(\hull{AC}, \hull{BC})^\ELA = (\hull{AC}, \hull{DC}, \hull{BD})^\ELA = (\hull{AD}, \hull{BD})^\ELA = \hull{ABD}^\ELA,
\]
where the first equality follows from $C \subseteq D \subseteq B$, and the second and third from $A \ind_C^\strong D$ and $A \ind_D^\strong B$ respectively.

For \textsc{Monotonicity}, suppose $A \ind^\strong_C B$, and $D \subseteq B$. We want to show $A \ind^\strong_C D$. We may assume all of $A$, $B$, $C$, and $D$ are strong ELA-subfields of $F$, and $C \subseteq A \cap D$.

Condition (STR1$'$) follows from \textsc{Monotonicity} for $\ind^{\td}$. For condition (STR2$'$), we have $A \ind^{\td}_C B$, so by \textsc{Base-Monotonicity} and then \textsc{Normality} for $\ind^{\td}$ we have $AD \ind^{\td}_D B$, the latter being equivalent to $\gen{AD} \ind^{\td}_D B$.

We have $D \strong F$, so in particular $D \strong B$. So applying \thref{strong amalg lemma}, we get  $\gen{AD} \strong \gen{AB}$. We know $\gen{AB} \strong F$, and the composite of strong embeddings is strong, so $\gen{AD} \strong F$, which is condition (STR2$'$).
Hence $A \ind^\strong_C D$.

For \textsc{Base-Monotonicity}, suppose again that $A \ind^\strong_C B$, and $C \subseteq D \subseteq B$. We now want to show $A \ind^\strong_D B$. Again we may assume all of $A$, $B$, $C$, and $D$ are strong ELA-subfields of $F$, and $C \subseteq A \cap D$. By \textsc{Monotonicity} it suffices to prove that $\hull{AD} \ind_D^\strong B$, for which we will use \thref{strong indep equiv form}.

As in the proof of \textsc{Monotonicity}, we have $\gen{AD}  \ind^{\td}_D B$, and $\gen{AD} \strong F$, so $\hull{AD} = \linspan(AD)$. Hence $\hull{AD} \cup \exp(\hull{AD}) \subseteq \gen{AD}$ and (STR1$'$) holds. Now note that $\gen{\hull{AD} \cup B} = \gen{A \cup D \cup B} = \gen{A \cup B}$ because $D \subseteq B$, and hence $\hull{AD} \cup B \strong F$. So (STR2$'$) holds, which concludes our proof.
\end{proof}

Recall that $\ELAFstrong$ is the category of all ELA-fields with strong embeddings.
\begin{proposition}\thlabel{prop: strong invariance}
The independence notion $\ind^\strong$ satisfies \textsc{Invariance} for strong embeddings, and hence is an independence notion on the category \ELAFstrong.
\end{proposition}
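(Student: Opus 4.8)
The plan is to reduce \textsc{Invariance} to an absoluteness statement for a single strong inclusion. Given a strong embedding $f\colon F_1 \strong F_2$, I would factor it as the isomorphism $F_1 \iso f(F_1)$ of ELA-fields followed by the inclusion $f(F_1) \into F_2$, which is again strong since $f$ is (this is precisely what Definition~\thref{def:relative-predimension-over-the-kernel}(2) says about the image $f(F_1)$ being a strong subset). Strong independence is defined purely in terms of the ELA-field structure, namely the hull operator, the $\ELA$-closure, field-theoretic algebraic independence, and the strongness predicate, all of which are carried along by isomorphisms of ELA-fields. Hence $A \ind^{\strong, F_1}_C B$ holds iff $f(A) \ind^{\strong, f(F_1)}_{f(C)} f(B)$ holds, and the remaining task is to prove that for a strong ELA-subfield inclusion $F_1 \strong F_2$ and subsets $A, B, C \subseteq F_1$ the condition $A \ind^{\strong, F_1}_C B$ is equivalent to $A \ind^{\strong, F_2}_C B$; that is, both (STR1) and (STR2) are absolute between $F_1$ and $F_2$.

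First I would check that the closures occurring in the definition are absolute. Since $f$, hence the inclusion, is kernel-preserving we have $\ker_{F_1} = \ker_{F_2}$, so the remark following \thref{def:hull} gives $\hull{X}_{F_1} = \hull{X}_{F_2}$ for any $X \subseteq F_1$, and the absoluteness of $\ELA$-closure under kernel-preserving inclusions (noted after the definition of $\gen{-}^\ELA$) upgrades this to $\hull{X}^\ELA_{F_1} = \hull{X}^\ELA_{F_2}$. Applying this to $X = AC, BC, C$ shows that the hulls appearing in (STR1) and (STR2) denote the same subsets whether computed in $F_1$ or $F_2$. Consequently (STR1) is a statement of algebraic independence between fixed subsets of $F_1$, and since algebraic independence does not depend on the ambient field (equivalently, $\ind^{\td}$ is invariant under field embeddings) it holds in $F_1$ iff it holds in $F_2$.

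The remaining and only substantive point is the absoluteness of (STR2), that the set $S = \hull{AC}^\ELA \cup \hull{BC}^\ELA$ satisfies $S \strong F_1$ iff $S \strong F_2$. The downward direction $S \strong F_2 \Rightarrow S \strong F_1$ is immediate, since for a finite tuple $b$ from $F_1$ the predimension $\Delta(b/S)$ is computed from transcendence degrees and $\Q$-linear dimensions of tuples already lying in $F_1$, so it takes the same value in $F_1$ and in $F_2$. The upward direction is exactly where the strongness of $f$ is needed: $S \strong F_1$ means that the embedding $\gen{S \cup \ker_{F_1}}_{F_1} \strong F_1$, and composing with the strong inclusion $F_1 \strong F_2$, using that strong embeddings compose (\thref{def:relative-predimension-over-the-kernel}), yields $\gen{S \cup \ker_{F_1}}_{F_1} \strong F_2$, which is precisely $S \strong F_2$ because $\ker_{F_1} = \ker_{F_2}$ makes the generated partial E-subfield the same inside $F_2$.

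I expect this transfer of strongness to be the main obstacle, and it is the whole reason the property fails for arbitrary embeddings: a subset strong in $F_1$ need not stay strong in a general extension, whereas compositionality of strong embeddings guarantees it does in a strong one. Assembling the three pieces gives the equivalence of (STR1) and (STR2) between $F_1$ and $F_2$, hence \textsc{Invariance}, and therefore $\ind^\strong$ is an independence notion on $\ELAFstrong$.
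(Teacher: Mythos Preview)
Your proof is correct and follows essentially the same approach as the paper: both arguments reduce to the absoluteness of $\hull{X}^\ELA$ under strong inclusions and the transfer of the strongness predicate $S \strong F$ along a strong extension $F_1 \strong F_2$. You simply spell out in more detail what the paper's three-sentence proof leaves implicit, in particular the factorisation through an isomorphism, the separate treatment of (STR1), and the two directions of (STR2).
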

\begin{proof}
Suppose $F_1 \strong F_2$ is a strong extension of ELA-fields. Then for any subset $X \subseteq F_1$ we have $\hull{X}^\ELA_{F_1} = \hull{X}^\ELA_{F_2}$. Then (dropping the subscripts), since $F_1 \strong F_2$ we also have $\hull{AC}^\ELA \cup \hull{BC}^\ELA \strong F_1$ if and only if $\hull{AC}^\ELA \cup \hull{BC}^\ELA \strong F_2$.
So the result follows.
\end{proof}

\begin{proposition}
\thlabel{prop:strong-local-character}
The independence relation $\ind^\strong$ satisfies \textsc{Local Character} on \ELAFstrong, and the cardinal $\lambda$ involved is $\aleph_0$.
\end{proposition}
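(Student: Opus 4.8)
The plan is to verify the usual \textsc{Local Character} property directly with $\lambda = \aleph_0$: given a finite subset $A \subseteq F$ and an arbitrary subset $B \subseteq F$, we must produce a \emph{finite} $B_0 \subseteq B$ with $A \ind^{\strong,F}_{B_0} B$. Since $B_0 \subseteq B$ we have $\hull{BB_0}^\ELA = \hull{B}^\ELA$, so by \thref{def:strong-independence} it suffices to find a finite $B_0$ for which
\[
\hull{AB_0}^\ELA \ind^{\td}_{\hull{B_0}^\ELA} \hull{B}^\ELA \qquad\text{and}\qquad \hull{AB_0}^\ELA \cup \hull{B}^\ELA \strong F.
\]

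First I would isolate the finite increment contributed by $A$. Write $H = \hull{B}^\ELA_F$, a strong ELA-subfield of $F$. Applying \thref{hull finite char}(3) with $C = H$ and the finite tuple $A$, the linear dimension $\ldim_\Q(\hull{A \cup H}/H)$ is finite; fixing a finite $\Q$-basis $\bar e$ of $\hull{AB}$ over $H$ we obtain $\hull{AB}^\ELA = \gen{H \cup \bar e}^\ELA$. By coherence of strong embeddings and \thref{strong extensions are nearly free}, this exhibits $\hull{AB}^\ELA$ as a free ELA-extension of the finite partial E-field extension $\gen{H \cup \bar e}$ of $H$, so the whole increment of $A$ over $H$ is determined by the finitely many field-theoretic algebraic relations on $\bar e, \exp(\bar e)$ over $H$.

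Next I would descend this finite data to a finite base. Both the hull operator (\thref{hull finite char}(2)) and field-theoretic algebraic closure have finite character, and $H = \bigcup \{\hull{B_0}^\ELA : B_0 \subseteq B \text{ finite}\}$. Hence there is a finite $B_0 \subseteq B$ over whose ELA-hull $\hull{B_0}^\ELA$ the tuple $\bar e$ lies and all of the finitely many defining relations are already present. For such a $B_0$ the increment \emph{stabilises}: $\hull{A \cup \hull{B_0}^\ELA} = \hull{B_0}^\ELA \oplus \linspan(\bar e)$, the extension $\hull{AB_0}^\ELA = \gen{\hull{B_0}^\ELA \cup \bar e}^\ELA$ is the corresponding free ELA-extension of $\hull{B_0}^\ELA$, and $\hull{AB_0}^\ELA \cap \hull{B}^\ELA = \hull{B_0}^\ELA$. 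In particular $\hull{AB_0}^\ELA$ is algebraically independent from $\hull{B}^\ELA$ over $\hull{B_0}^\ELA$, which is the first displayed condition. Then $\hull{AB}^\ELA$ is precisely the free amalgam of $\hull{AB_0}^\ELA$ and $\hull{B}^\ELA$ over $\hull{B_0}^\ELA$, so \thref{strong amalg lemma} (as packaged in the free-amalgam characterisation of \thref{strong indep equiv form}) gives $\hull{AB_0}^\ELA \cup \hull{B}^\ELA \strong F$, the second condition. Thus $A \ind^{\strong,F}_{B_0} B$ with $B_0$ finite.

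The hard part is the descent step. Although \thref{hull finite char}(3) bounds $A$'s contribution at the \emph{linear} level, the ELA-hulls $\hull{AB_0}^\ELA$ and $\hull{B}^\ELA$ are infinite-dimensional, so the algebraic-independence clause is a genuine condition — for instance it fails for $B_0 = \emptyset$ whenever $A \subseteq \ecl_F(B)$. The crux is therefore that the finitely many relations witnessing $A$'s increment, together with the generators $\bar e$ themselves, descend to a finite $B_0 \subseteq B$; this is exactly where finite character of $\ecl$ (equivalently of the hull) and of algebraic closure is used, and once the increment has stabilised the free-amalgamation machinery of \thref{strong indep equiv form} and \thref{strong amalg lemma} delivers both clauses at once.
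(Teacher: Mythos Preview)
Your outline is broadly the same strategy as the paper's --- find a finite base under $B$ over which the ``increment'' of $A$ is already visible, then verify (STR1) and (STR2) --- but the descent step, which you yourself flag as the hard part, is not actually carried out, and as written it does not go through.

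The central unjustified claim is that for a suitably chosen finite $B_0$ one has $\hull{A\cup\hull{B_0}^\ELA} = \hull{B_0}^\ELA \oplus \linspan(\bar e)$. For this you need $\hull{B_0}^\ELA \cup \bar e \strong F$, and that is \emph{not} a consequence of finite character of the hull or of field-theoretic algebraic closure. Knowing that the finitely many polynomial relations cutting out the locus of $(\bar e,\exp(\bar e))$ over $H$ descend to $\hull{B_0}^\ELA$ controls $\Delta(\bar e/\hull{B_0}^\ELA)$, but strongness is a universal condition: you must rule out \emph{every} finite $d\in F$ with $\Delta(d/\hull{B_0}^\ELA\bar e)<0$, and finite character gives you no handle on that quantifier. (There is also a slip: you write that ``the tuple $\bar e$ lies'' over $\hull{B_0}^\ELA$, but $\bar e$ was chosen linearly independent over $H\supseteq\hull{B_0}^\ELA$.) Without this strongness, you cannot identify $\hull{AB_0}^\ELA$ as the free ELA-extension you want, and then the appeal to \thref{strong indep equiv form} and \thref{strong amalg lemma} has no traction.

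The paper's proof avoids this by a different manoeuvre: it first \emph{enlarges} the finite set $A$ so that $AB\strong F$, which forces every hull computation to live inside $\linspan(AB)$. With that normalisation in hand, one uses \textsc{Local Character} for $\ind^{\td}$ to pick a finite $B'\subseteq B$, sets $C=\hull{B'}^\ELA$, and then applies \thref{hull finite char} once more to find a finite $B_0\supseteq B'$ with $CAB_0\strong F$ outright. From there one can factor the resulting free ELA-extension through $\gen{A\,\hull{B_0}^\ELA}$ to get $A\hull{B_0}^\ELA\strong F$ directly, and the (STR$1'$)/(STR$2'$) conditions of \thref{strong indep equiv form} follow. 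The enlarging of $A$ and the second use of \thref{hull finite char} are the ingredients your sketch is missing.
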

\begin{proof}
Let $F$ be an ELA-field, and let $A, B \subseteq F$ with $A$ finite. We have to find a finite $B_0 \subseteq B$ such that $A \ind_{B_0}^{\strong} B$. 

First we show that we can assume $B = \hull{B}^\ELA$. If there is a finite $B_1 \subseteq \hull{B}^\ELA$ such that $A \ind_{B_1}^\strong \hull{B}^\ELA$ then by finite character of the $\hull{-}^\ELA$ operator there is a finite $B_0 \subseteq B$ with $B_1 \subseteq \hull{B_0}^\ELA$, and hence $\hull{B_0}^\ELA = \hull{B_1}^\ELA$. So then $A \ind_{B_0}^\strong B$.

Next, by \thref{hull finite char}, there is a finite $A' \supseteq A$ such that $A' B \strong F$. We can replace $A$ by $A'$, by \textsc{Monotonicity} for $\ind^\strong$, so we assume $A B \strong F$.

By \textsc{Local Character}  for $\ind^{\td}$, there is finite $B' \subseteq B$ with $A \exp(A) \ind_{B'}^{\td} B$. Let $C \leteq \hull{B'}^\ELA$. Then $C \strong AB$ so, by \thref{hull finite char} again, there is a finite $B_0 \subseteq B$ with $B' \subseteq B_0$ such that $C A B_0 \strong F$. So by \thref{strong extensions are nearly free} $\hull{CAB_0}^\ELA$ is (isomorphic to) a free ELA-extension of $\gen{CAB_0}$. This free extension can be factorised as $\gen{CAB_0} \hookrightarrow \gen{A \hull{B_0}^\ELA} \hookrightarrow \hull{CAB_0}^\ELA$, where each inclusion is free. As free extensions are strong we have $A_1 \leteq A \hull{B_0}^\ELA \strong F$.

By \textsc{Base-Monotonicity} for $\ind^{\td}$ and our choice of $B' \subseteq \hull{B_0}^\ELA \subseteq B$ we have $A \exp(A) \ind^{\td}_{\hull{B_0}^\ELA} B$. Then by \textsc{Normality} for $\ind^{\td}$, we get $A_1 \ind^{\td}_{\hull{B_0}^\ELA} B$. We also have $A \subseteq A_1 \subseteq AB$, so $\hull{A_1 B} = \hull{AB}$. Since $AB \strong F$ we thus have $A_1 B \strong F$. Hence conditions (STR1$'$) and (STR2$'$) hold, so $A_1 \ind^\strong_{B_0} B$.

Finally, $A \ind^\strong_{B_0} B$ by \textsc{Monotonicity}.
\end{proof}

\begin{proposition}
\thlabel{prop: strong extension}
The independence relation $\ind^\strong$ satisfies \textsc{Extension} on the category \ELAFstrong.
\end{proposition}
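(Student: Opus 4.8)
The plan is to mimic the proof of \thref{ELA extension} for $\ind^\ELA$, but replacing ELA-closures by hulls and ordinary free amalgams by \emph{strong} ones, so that every embedding in the construction is strong and the invariance statement \thref{prop: strong invariance} applies. First I would make the standard reductions: since $\ind^\strong$ and Galois types depend only on hulls, using \textsc{Monotonicity} I may assume $C=\hull{C}^\ELA$, that $B = \hull{BC}^\ELA$ and $B' = \hull{B'}^\ELA$ are strong ELA-subfields of $F$ with $C \subseteq B \subseteq B'$, and I set $A \leteq \hull{Ba}^\ELA$, a strong ELA-subfield of $F$ containing $B$. By coherence for strong embeddings we then have $B \strong A$ and $B \strong B'$.

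Next I would regard $A$ and $B'$ as abstract strong extensions of $B$ and form their free amalgam $M$ over $B$, arranged so that $A \cap B' = B$; this exists by \thref{cor:independent-ela-amalgamation} together with the $(-)^\ELA$ construction. Applying \thref{strong amalg lemma} with base $B$ and using $B \strong A$, $B \strong B'$, both $A$ and $B'$ are strong in $\gen{AB'}_M$, and since $\gen{AB'}_M \strong M$ we get $A \strong M$ and $B' \strong M$. I would then form the free amalgam $N$ of $F$ and $M$ over $B'$; applying \thref{strong amalg lemma} once more with base $B'$ and using $B' \strong F$, $B' \strong M$, yields $F \strong N$ and $M \strong N$, so that $N$ is a legitimate extension of $F$ in $\ELAFstrong$. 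Let $a'$ and $A'$ be the images of $a$ and $A$ under $A \into M \into N$.
\[
\begin{tikzcd}
a \arrow[r]     & A \arrow[r] \arrow[rrd, dashed] & F \arrow[r, dotted]            & N                    \\
C \arrow[u] \arrow[r] & B \arrow[u] \arrow[r]           & B' \arrow[u] \arrow[r, dashed] & M \arrow[u, dotted]
\end{tikzcd}
\]

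It then remains to verify the two required conclusions. For the Galois type, the embedding $A \into M$ fixes $B$ pointwise and carries $a$ to $a'$; since $A = \hull{Ba}^\ELA$ and $A' = \hull{Ba'}^\ELA_N$ are exactly the intersections $[Ba]$ and $[Ba']$ of \thref{Galois types from intersections}, that lemma gives $\gtp(a'/B;N) = \gtp(a/B;F)$, hence $\gtp(a'/BC;N) = \gtp(a/BC;F)$ because $C \subseteq B$. For independence, $M$ is by construction the free amalgam of $A$ and $B'$ over $B$, so $A' \ind^{\strong,M}_B B'$ by \thref{strong indep equiv form}; as $M \strong N$, \thref{prop: strong invariance} gives $A' \ind^{\strong,N}_B B'$ and \textsc{Monotonicity} gives $a' \ind^{\strong,N}_B B'$. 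On the other hand, from the hypothesis $a \ind^{\strong,F}_C B$, the equality of Galois types just established, and \textsc{Invariance}, I get $a' \ind^{\strong,N}_C B$. Since $C \subseteq B \subseteq B'$, \textsc{Transitivity} yields $a' \ind^{\strong,N}_C B'$, as required.

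The hard part will be the strongness bookkeeping. Unlike in the $\ind^\ELA$ case, each amalgam and each comparison map has to be checked to be strong before the relevant tools (the equivalence in \thref{strong indep equiv form} and $\ind^\strong$-\textsc{Invariance}) can be invoked, and in particular one must confirm that the second amalgamation really delivers $F \strong N$, so that $N$ is an admissible extension in $\ELAFstrong$. All of this is extracted from repeated, careful applications of \thref{strong amalg lemma}, using that $B$ and $B'$ are strong ELA-subfields; tracking exactly which embeddings are strong is the crux of the argument.
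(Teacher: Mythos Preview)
Your proposal is correct and is exactly what the paper intends: its proof is a single sentence, ``The same as in \thref{ELA extension}, only we replace $\gen{-}^\ELA$ and $\ind^\ELA$ by $\hull{-}^\ELA$ and $\ind^\strong$ respectively.'' You have faithfully carried out that replacement and, in addition, spelled out the strongness bookkeeping (via repeated use of \thref{strong amalg lemma}) that the paper leaves implicit; this extra care is precisely what is needed to ensure the construction stays inside $\ELAFstrong$ and that \textsc{Invariance} applies.
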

\begin{proof}
The same as in \thref{ELA extension}, only we replace $\gen{-}^\ELA$ and $\ind^\ELA$ by $\hull{-}^\ELA$ and $\ind^\strong$ respectively.
\end{proof}

\begin{proposition}
\thlabel{prop:strong vfk stationarity}
The independence relation $\ind^\strong$ satisfies \textsc{Stationarity} on the category $\ELAF_{\mathrm{vfk},\strong}$ of ELA-fields with very full kernel and strong embeddings.
\end{proposition}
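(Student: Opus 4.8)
The plan is to reduce \textsc{Stationarity} to the uniqueness of free amalgams, \thref{uniqueness of free amalgam}, via the characterisation of Galois types in \thref{Galois types from intersections}. In $\ELAF_{\mathrm{vfk},\strong}$ the smallest strong ELA-subobject containing a set $S$ is $\hull{S}^\ELA$, so for the given extension $M \strong N$ and tuples $a_i$ we have $[Ma_i] = \hull{Ma_i}^\ELA$ and $[MBa_i] = \hull{MBa_i}^\ELA$. Thus the hypothesis $\gtp(a_1/M; N) = \gtp(a_2/M; N)$ supplies an isomorphism $\sigma\colon \hull{Ma_1}^\ELA \to \hull{Ma_2}^\ELA$ fixing $M$ pointwise with $\sigma(a_1) = a_2$, and to establish \textsc{Stationarity} it suffices to produce an isomorphism $\hull{MBa_1}^\ELA \to \hull{MBa_2}^\ELA$ fixing $MB$ pointwise and sending $a_1$ to $a_2$.

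Write $A_i \leteq \hull{Ma_i}^\ELA$ and $D \leteq \hull{MB}^\ELA$. Since $M$ is a strongly embedded ELA-field we have $\hull{M}^\ELA = M$, so the hypothesis $a_i \ind^{\strong, N}_M B$ is exactly the relation $A_i \ind^{\strong, N}_M D$ in the form treated by \thref{strong indep equiv form} (the convenience assumptions $C = \hull{C}^\ELA$, $C \subseteq A \cap B$, $A = \hull{A}$, $B = \hull{B}$ all hold with $C = M$). By the final clause of that proposition, $\hull{MBa_i}^\ELA$ is then a free amalgam of $A_i$ and $D$ over $M$, and in particular $A_i \cap D = M$ by (STR1$'$).

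Now I would transport the given isomorphism across the amalgams. Regarding all maps as inclusions, the field $\hull{MBa_2}^\ELA$ equipped with the embedding $A_1 \xrightarrow{\sigma} A_2 \hookrightarrow \hull{MBa_2}^\ELA$ and the inclusion $D \hookrightarrow \hull{MBa_2}^\ELA$ is again a free amalgam of $A_1$ and $D$ over $M$: the conditions $A_1 \ind^{\td}_M D$, $A_1 \cap D = M$, and the freeness of the ELA-extension are all carried over from $\hull{MBa_2}^\ELA$ being a free amalgam of $A_2$ and $D$, since $\sigma$ fixes $M$. As $M$ has very full kernel, \thref{uniqueness of free amalgam} applies with base $C = M$ and yields an isomorphism $\theta\colon \hull{MBa_1}^\ELA \to \hull{MBa_2}^\ELA$ compatible with both legs; that is, $\theta$ restricts to $\sigma$ on $A_1$ and to the identity on $D$. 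Hence $\theta$ fixes $MB \subseteq D$ pointwise and $\theta(a_1) = \sigma(a_1) = a_2$, so by \thref{Galois types from intersections} we obtain $\gtp(a_1/MB; N) = \gtp(a_2/MB; N)$, as required.

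The only essential use of a hypothesis is the very full kernel of $M$, which is precisely what \thref{uniqueness of free amalgam} requires; this is the step where both the argument and the theorem fail for general kernels (compare the example after \thref{free ELA construction}). Everything else is formal, resting entirely on the identification of strong independence with free amalgamation.
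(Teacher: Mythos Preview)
Your proof is correct and follows essentially the same approach as the paper: both reduce \textsc{Stationarity} to \thref{uniqueness of free amalgam} by using \thref{Galois types from intersections} to convert the type hypothesis into an isomorphism of $\hull{Ma_i}^\ELA$ over $M$, identifying $\hull{MBa_i}^\ELA$ as the free amalgam via \thref{strong indep equiv form}, and then invoking uniqueness (under very full kernel) to extend the isomorphism over $\hull{MB}^\ELA$. Your explicit ``transport'' of $\sigma$ into the second amalgam is exactly the mechanism the paper summarises with the phrase ``$\theta$ extends''.
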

\begin{proof}
Let $C \strong F$ be a strong inclusion of ELA-fields with very full kernel. Let $B \subseteq F$, and let $a_1$ and $a_2$ be possibly infinite tuples from $F$ such that $a_1 \ind_C^\strong B$ and $a_2 \ind_C^\strong B$, and $\gtp(a_1/C) = \gtp(a_2/C)$. We may assume that $B = \hull{BC}^\ELA$ and will show that $\gtp(a_1/B) = \gtp(a_2/B)$.

Using \thref{Galois types from intersections} together with $\gtp(a_1/C) = \gtp(a_2/C)$ we find an isomorphism $\theta: \hull{C a_1}^\ELA \iso \hull{C a_2}^\ELA$, fixing $C$ pointwise and sending $a_1$ to $a_2$. As $\hull{C a_i}_F^\ELA \ind_C^\strong B$ for $i=1,2$ we can apply \thref{uniqueness of free amalgam} to see that $\theta$ extends to an isomorphism $\hull{B a_1}_F^\ELA \iso \hull{B a_2}_F^\ELA$, fixing $B$ pointwise and sending $a_1$ to $a_2$. By \thref{Galois types from intersections} again we then indeed conclude that $\gtp(a_1/B) = \gtp(a_2/B)$.
\end{proof}

Putting the above results together, we can now prove that $\ind^\strong$ is a stable independence relation on $\ELAF_{\mathrm{vfk},\strong}$ (or, more correctly, on each connected component).

\begin{proof}[Proof of \thref{strong indep is stable}]
The basic properties, together with \textsc{Base-Monotonicity}, are proved in \thref{prop:strong-independence-basic-properties}. We get \textsc{Invariance} from \thref{prop: strong invariance}, \textsc{Local Character} from \thref{prop:strong-local-character}, and \textsc{Extension} from \thref{prop: strong extension}. We proved these properties for $\ind^\strong$ as an independence relation on $\ELAF_\strong$, but they are preserved when restricting to the subcategory $\ELAF_{\mathrm{vfk},\strong}$ which consists of those connected components of $\ELAF_{\strong}$ where the kernel of the exponential map is very full.  \textsc{Stationarity} is given by \thref{prop:strong vfk stationarity}. Then by Remark \ref{indep-properties-remark} we get \textsc{Club Local Character} and \textsc{3-amalgamation}, completing the list of required properties.
\end{proof}

\subsection{More general kernels}
\label{subsec:more-general-kernels}
As mentioned in the introduction, we conjecture that the restriction to exponential fields with very full kernel is not needed, and that strong independence is a stable independence relation on $\ELAF_\strong$. Only the \textsc{Stationarity} property is needed, and this is equivalent to the uniqueness of free amalgams. This in turn is related to the uniqueness of the free ELA-closure, for which we give sufficient conditions in \thref{uniqueness of free extensions} and \thref{rem:uniqueness-countable-case}. The assumption of very full kernel essentially identifies the appropriate consequence of first-order saturation to sidestep any obstacles to amalgamation (and hence the construction of isomorphisms to show uniqueness) which might occur. The alternative conditions stated in \thref{rem:uniqueness-countable-case} make use of the so-called Thumbtack Lemma of \cite{Zilber06covers, BZ11} of Kummer theory, and we have uniqueness in the case that everything is countable. In particular, we can prove the case of \textsc{Stationarity} where $a, B, C$ are all countable.
The construction of Zilber's exponential field and the proof of its uncountable categoricity in \cite{zilber_pseudo-exponentiation_2005, bays_pseudo-exponential_2018} uses a higher amalgamation technique (excellence) to extend this uniqueness from the countable case to the arbitrary uncountable cardinalities, using systems which are independent with respect to the pregeometry $\ecl$. We would hope that a similar technique could be used in our case, especially in the case of exponential fields $F$ such that $\hull{\emptyset}^\ELA_F$ is countable, but we have not been able to achieve this. The case where  $\hull{\emptyset}^\ELA_F$ is uncountable but $F$ does not have very full kernel seems harder again.

\section{Comparison with exponential algebraic independence}

Earlier we mentioned that closed embeddings can be characterised by the predimension function $\Delta$, in a similar way to strong embeddings. We use this to show that the exponential algebraic independence notion $\ind^{\etd}$ can be characterised in terms of strong independence.
Recall from the introduction:
\begin{thm}[\ref{thm:ecl-independence-vs-ecf-independence}]
Let $F$ be an exponential field and $A, B, C \subseteq F$. Then we have
\[
A \ind_C^{\etd, F} B \quad  \Longleftrightarrow \quad A \ind_{\ecl_F(C)}^{\strong, F} B.
\]
\end{thm}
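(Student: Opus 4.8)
The plan is to reduce everything to the predimension $\Delta$ and its link with $\etd$ from \thref{etd and predim}. Since $\ecl$ is a pregeometry with dimension $\etd$, replacing $C$ by $C' \leteq \ecl_F(C)$ changes neither $\etd(a/C)$ nor $\etd(a/BC)$, so $A \ind^{\etd,F}_C B \iff A \ind^{\etd,F}_{C'} B$; and as $C'$ is closed it is strong, $C' \strong F$, with $\hull{C'} = C'$. Thus it suffices to prove, for a closed $C'$, that $A \ind^{\etd,F}_{C'} B \iff A \ind^{\strong,F}_{C'} B$. Both relations have \textsc{Finite Character} in the first argument (for $\ind^\strong$ by \thref{prop:strong-independence-basic-properties}), so I may take $A = a$ a finite tuple. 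Passing to a strong, kernel-preserving ELA-extension $F \strong F^\ELA$ leaves $\Delta$, the plain hulls $\hull{-}$, the predicate $\strong$ on subsets of $F$, and $\ind^\strong$ unchanged (the last by \thref{prop: strong invariance}); with \thref{strong extensions are nearly free} and \thref{strong indep equiv form} this lets me use the plain-hull form of strong independence: writing $B^* \leteq \hull{BC'}_F$, the relation $a \ind^\strong_{C'} B$ is equivalent to the conjunction of (STR1) $\hull{aC'} \ind^\td_{C'} B^*$ and (STR2) $\hull{aC'} \cup B^* \strong F$. Finally, a sandwich argument (monotonicity of $\etd$ in the base, plus invariance of $\etd$ under $\ecl$ and under $\hull{-}$) gives $\etd(a/BC') = \etd(a/B^*)$, so the left-hand side becomes $\etd(a/B^*) = \etd(a/C')$.

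Next I set up the predimension dictionary. Fix a $\Q$-linear basis $d$ of $\hull{aC'}$ over $C'$; by \thref{etd and predim} and the hull construction in \thref{hull finite char} one has $\etd(a/C') = \Delta(d/C')$, and whenever $B^* d \strong F$ also $\etd(a/B^*) = \Delta(d/B^*)$. The key monotonicity is submodularity of the predimension: since transcendence degree is subadditive and linear dimension additive, $\Delta(d/B^*) \leq \Delta(d/C')$ for $C' \subseteq B^*$. In these terms (STR1) reads $\td(d,\exp d/B^*) = \td(d,\exp d/C')$, and (STR2) says $d$ realises the minimum $\min_{b} \Delta(db/B^*)$.

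For $(\Leftarrow)$, assume (STR1) and (STR2). By (STR2), $\etd(a/B^*) = \Delta(d/B^*)$; since (STR1) forces $\Delta(d/B^*) \geq \Delta(d/C')$ while submodularity gives $\Delta(d/B^*) \leq \Delta(d/C')$, we get $\Delta(d/B^*) = \Delta(d/C')$, hence $\etd(a/B^*) = \etd(a/C')$, i.e. $a \ind^\etd_{C'} B$. For $(\Rightarrow)$, assume $\etd(a/C') = \etd(a/B^*)$. The chain $\etd(a/B^*) = \min_b \Delta(db/B^*) \leq \Delta(d/B^*) \leq \Delta(d/C') = \etd(a/C')$ collapses to equalities; the first equality shows $d$ realises the minimal predimension over $B^*$, which is (STR2), and the last shows $\Delta(d/B^*) = \Delta(d/C')$.

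It remains to extract (STR1). The equality $\Delta(d/B^*) = \Delta(d/C')$ only yields $\td(d,\exp d/C') - \td(d,\exp d/B^*) = |d| - \ldim_{\Q}(d/B^*)$, so I must rule out a hidden $\Q$-linear dependence of $d$ over $B^*$; this is the main obstacle, since $\ecl$ is not modular and $\ecl_F(aC') \cap \ecl_F(BC')$ can a priori exceed $C'$. The resolution uses independence rather than modularity: a nonzero relation $\lambda = \sum q_i d_i \in D(B^*)$ would give $\lambda \in \ecl_F(aC') \cap \ecl_F(BC')$ with $\lambda \notin C'$, so $\etd(\lambda/C') = 1$ while $\etd(\lambda/BC') = 0$; but $a \ind^\etd_{C'} B$ implies $\ecl_F(aC') \ind^\etd_{C'} B$, forcing $\etd(\lambda/BC') = \etd(\lambda/C')$, a contradiction. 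Hence $d$ is linearly independent over $B^*$, giving (STR1) and completing the proof. The only other delicate point is the reconciliation, in the first paragraph, between the ELA-based \thref{def:strong-independence} and the plain-hull computation over the possibly non-ELA field $F$, which is dispatched by the invariance of all the relevant data under $F \strong F^\ELA$.
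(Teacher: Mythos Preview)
Your argument is correct and rests on the same two inputs as the paper's proof—\thref{etd and predim} for the $\etd$/$\Delta$ link and \thref{strong indep equiv form} for the (STR1$'$)/(STR2$'$) reformulation of $\ind^\strong$—but you organise them differently. The paper argues both directions by contrapositive and then splits into cases according to whether (STR1$'$) or (STR2$'$) fails (and within the first case, whether the failure is linear or purely algebraic). You instead run everything through the single chain
\[
\etd(a/B^*) = \min_b \Delta(db/B^*) \leq \Delta(d/B^*) \leq \Delta(d/C') = \etd(a/C'),
\]
reading off (STR2$'$) from the first equality and (STR1$'$) from the second once the linear-independence obstruction is cleared. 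Your separate treatment of that obstruction via $\ecl(aC') \ind^{\etd}_{C'} B$ is exactly the paper's subcase (1), isolated cleanly. The chain approach is pleasantly uniform and avoids the case split; the paper's version is more explicit about where each hypothesis bites.

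Two points need tightening. First, your displayed (STR1) ``$\hull{aC'} \ind^{\td}_{C'} B^*$'' drops the exponentials; the condition from \thref{strong indep equiv form} is $\hull{aC'},\exp(\hull{aC'}) \ind^{\td}_{C'} B^*,\exp(B^*)$, and your later restatement ``$\td(d,\exp d/B^*)$'' must be read as $\td(d,\exp d / B^*,\exp(B^*))$ for the argument to go through. Second, the inequality $\Delta(d/B^*) \leq \Delta(d/C')$ does \emph{not} follow from submodularity alone: subadditivity of $\td$ and modularity of $\ldim$ only give $\Delta(d/B^*) \leq \Delta(d/V)$ for $V = \linspan(dC') \cap B^*$, and you then need $\Delta(V/C') \geq 0$, i.e.\ $C' \strong F$, to reach $\Delta(d/C')$. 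This holds because $C'$ is $\ecl$-closed and hence strong, but it is an extra ingredient, not a formal consequence of submodularity. With these two clarifications your proof is complete.
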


\begin{proof}
We may assume $C = \ecl(C)$, $A = \hull{AC}$ and $B = \hull{BC}$. We will drop the indices for $F$ as it will not change in the proof.

First, suppose that $A \nind_C^{\etd} B$. Then there is a finite tuple $a \in A$ such that $\etd(a / B) < \etd(a / C)$. We can assume that $a$ is a basis for $\hull{B a}$ over $B$ to ensure that  $Ba \strong F$. Then since $B \strong Ba \strong F$ we have $\etd(a/B) = \Delta(a/B)$.

By \thref{etd and predim} we have $\etd(a/C) \leq \Delta(a / C) = \td(a e^a / C) - \ldim(a / C)$. 

So we have
\[
\td(a e^a / B \exp(B)) - \ldim(a / B) < \td(a e^a / C) - \ldim(a / C).
\]
Since $\ldim(a / B) \leq \ldim(a / C)$, we have that $\td(a e^a / C) > \td(a e^a / B \exp(B))$. We thus have $A \exp(A) \nind_C^{\td} B \exp(B)$ and hence $A \nind_C^\strong B$.

Conversely, suppose that $A \nind_C^{\strong} B$. 
So by \thref{strong indep equiv form} either $A \exp(A) \nind_C^{\td} B \exp(B)$ or $AB$ is not strong in $F$.

In the first case there is $a \in A$ such that $C a \strong F$ and $\td(a e^a / B \exp(B)) < \td(a e^a / C)$. There are two possibilities:
\begin{enumerate}
\item If $\ldim(a / B) < \ldim(a / C)$, then $(\linspan(C a) \cap B) \setminus C$ is nonempty and thus contains some $d \in A$. So $\etd(d / B) = 0$ and $\etd(d/C) = 1$, where the latter follows because $d \not \in C$ while $C = \ecl(C)$. Thus we have $A \nind_C^{\etd} B$.
\item If $\ldim(a / B) = \ldim(a / C)$, then $\Delta(a/B) < \Delta(a/C)$. Since $C a \strong F$ we have $\etd(a/C) = \Delta(a/C)$. So we have
\[
\etd(a/B) \leq \Delta(a/B) < \Delta(a/C) = \etd(a/C),
\]
and thus $A \nind_C^{\etd} B$.
\end{enumerate}
In the second case we assume $A \exp(A) \ind_C^{\td} B \exp(B)$ but $AB$ is not strong in $F$. So there is $a \in A$, $\Q$-linearly independent over $C$, and hence also over $B$, such that $C a \strong A$ while $Ba$ is not strong in $F$. We can then string together inequalities as follows:
\[
\etd(a / B) < \Delta(a / B) = \Delta(a / C) = \etd(a / C).
\]
The first inequality and the final equality follow from \thref{etd and predim}. The equality in the middle follows from the assumptions $\td(a e^a / B \exp(B)) = \td(a e^a / C)$, together with $\ldim(a / B) = \ldim(a / C)$. So we again conclude that $A \nind_C^{\etd} B$, which concludes the proof.
\end{proof}

\end{document}